  \newcommand{\s}{\ensuremath{\mathcal{S}}\xspace}            
  \newcommand{\mcg}{\ensuremath{\mathcal{MCG}(\s)}\xspace}    
  \newcommand{\cc}{\ensuremath{\mathcal{C}(\s)}\xspace}       
  \newcommand{\mg}{\ensuremath{\Mark(\s)}\xspace}              
  \newcommand{\m}{\ensuremath{\mathcal{M}}}                   
  \newcommand{\nn}{\ensuremath{\mathbb{N}}}                    
  \newcommand{\zz}{\ensuremath{\mathbb{Z}}}                    
  \DeclareMathOperator{\ord}{order}                            
  \DeclareMathOperator{\Mark}{Mark}                            
  \DeclareMathOperator{\base}{base}                            
  \DeclareMathOperator{\tran}{trans}                           
  \DeclareMathOperator{\diam}{diam}                            
  \DeclareMathOperator{\dist}{Dist}                            
  \DeclareMathOperator{\fix}{Fix}                              
  \newcommand{\wfix}{\widetilde{\fix}}                         
  \DeclareMathOperator{\stab}{Stab}                            
  \DeclareMathOperator{\homeo}{Homeo}                          
  \DeclareMathOperator{\cat}{CAT}                              
  \DeclareMathOperator{\Sl}{SL}                                
  \newtheoremstyle{introtheorem}{.5em}{.5em}{\itshape}{}{\bfseries\itshape}{.}{.5em}{}
  \theoremstyle{introtheorem}
    \newtheorem{introthm}{Theorem}
    \newtheorem{introcor}[introthm]{Corollary}
    \theoremstyle{plain}
    \newtheorem{theorem}{Theorem}[subsection]
    \newtheorem*{theorem*}{Theorem}
    \newtheorem{proposition}[theorem]{Proposition}
    \newtheorem{corollary}[theorem]{Corollary}
    \newtheorem{lemma}[theorem]{Lemma}
  \theoremstyle{definition}
    \newtheorem{definition}[theorem]{Definition}
    \newtheorem{remark}[theorem]{Remark}
    \newtheorem{notation}[theorem]{Notations}
  \newcommand{\thmref}[1]{Theorem~\ref{#1}}
  \newcommand{\propref}[1]{Proposition~\ref{#1}}
  \newcommand{\lemref}[1]{Lemma~\ref{#1}}
  \newcommand{\corref}[1]{Corollary~\ref{#1}}
  \newcommand{\figref}[1]{Figure~\ref{#1}}
  \newcommand{\remref}[1]{Remark~\ref{#1}}
  \newcommand{\defref}[1]{Definition~\ref{#1}}
  \newcommand{\secref}[1]{\S\ref{#1}}
  \newcommand\address[1]{}
  \newcommand\email[1]{}
  \newcommand\dedicatory[1]{}
\begin{document}


  \title      {  Linearly Bounded Conjugator Property for Mapping Class Groups  }

  \author     {  Jing Tao  }

  \address    {  Department of Mathematics,
                 University of Illinois at Chicago,
                 851 South Morgan Street,
                 Chicago, IL 60607-7045, USA  }

  \email      {  jingtao@math.uic.edu  }

  \date  {} 

  \maketitle
  
  \begin{abstract}

    Given two conjugate mapping classes $f$ and $g$, we produce a conjugating
    element $\omega$ such that $| \omega | \le K \big( | f | + | g |
    \big)$, where $| \cdot |$ denotes the word metric with respect to a
    fixed generating set, and $K$ is a constant depending only on the
    generating set. As a consequence, the conjugacy problem for mapping
    class groups is exponentially bounded.

  \end{abstract}



\section{Introduction}

  Two fundamental problems in group theory posed by Dehn are the word
  problem and the conjugacy problem \cite{Deh11}.  Given a group with a
  fixed presentation, the \emph{word problem} asks if there is an algorithm
  that can decide in finite time if a given word is the identity. The
  \emph{conjugacy problem} seeks an algorithm to decide if two words
  represent the same conjugacy class. Since the conjugacy class of the
  identity element is itself, the word problem can be seen as a special
  case of the conjugacy problem. Not all groups have solvable word problem
  \cite{Nov58,Boo59}, hence the same is true for the conjugacy problem. 
  
  In this paper, we are interested in these problems for mapping class
  groups $\mcg$ of surfaces $\s$ of finite type. We establish the
  following:

  \begin{introthm}\label{introthmComplexity}

    There is an exponential-time algorithm to solve the conjugacy problem
    for \mcg.

  \end{introthm}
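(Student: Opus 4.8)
\section*{Proof proposal for Theorem~\ref{introthmComplexity}}

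The plan is to deduce Theorem~\ref{introthmComplexity} formally from the linearly bounded conjugator property advertised in the abstract (to be proved, in the body of the paper, as the main theorem): there is a constant $K$, depending only on a fixed finite generating set of \mcg, such that any two conjugate mapping classes $f$ and $g$ admit a conjugator $\omega$ with $|\omega| \le K\big(|f| + |g|\big)$. Fix such a generating set, say with $N$ elements, together with the associated word metric $|\cdot|$. An instance of the conjugacy problem consists of two words $u$ and $v$ over the generators, of total length $n = |u| + |v|$, representing mapping classes $f$ and $g$; we must decide whether $f$ and $g$ are conjugate. Since word length is bounded by the length of any representing word, $|f| + |g| \le n$.

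First I would record the decisive dichotomy. If $f$ and $g$ are conjugate, the conjugator property yields $\omega \in \mcg$ with $\omega f \omega^{-1} = g$ and $|\omega| \le K(|f|+|g|) \le Kn$. Hence it suffices to enumerate all words $\omega$ over the generating set with $|\omega| \le Kn$ and to test each one: $f$ and $g$ are conjugate \emph{if and only if} some such $\omega$ conjugates $f$ to $g$. The soundness of a negative answer is precisely the content of the length bound --- if no short conjugator works, then no conjugator works at all --- while soundness of a positive answer is immediate.

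Next I would bound the running time. There are at most $(2N)^{Kn+1}$ words of length $\le Kn$, i.e. $2^{O(n)}$ candidates, and each can be written down in time $O(n)$. For a candidate $\omega$, one forms the word $w_\omega := \omega\, u\, \omega^{-1}\, v^{-1}$, of length $O(n)$, and asks whether it represents the identity in \mcg; this is an instance of the word problem. The word problem for \mcg\ is solvable --- indeed in quadratic time, since \mcg\ is automatic (Mosher) --- so in particular in time at most exponential in $n$ for each candidate. Running this check over the $2^{O(n)}$ candidates therefore gives an algorithm that halts in exponential time and correctly decides conjugacy, which is Theorem~\ref{introthmComplexity}. (One should note the mild point that the inputs are words rather than normal forms, but this is harmless: word length is bounded by representative length, and an automatic structure converts words to normal form efficiently.)

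The real work is not in this reduction but in the linearly bounded conjugator property itself; granting it, Theorem~\ref{introthmComplexity} is essentially a bookkeeping exercise. So the main obstacle I anticipate --- and the substance of the paper --- is establishing that conjugate mapping classes can always be conjugated by an element of linearly bounded length, presumably by analyzing the Thurston types separately (periodic, reducible, pseudo-Anosov), controlling canonical reducing systems and subsurface projections, and assembling a conjugator whose length is estimated via the geometry of the marking/curve complexes; the exponential-time corollary above is then immediate.
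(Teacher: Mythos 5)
Your proposal is correct and matches the paper's own argument: the paper likewise enumerates all $\omega$ in the ball of radius $K(|f|+|g|)$ and tests $\omega f \omega^{-1} g^{-1} = 1$ using Mosher's quadratic-time word-problem algorithm, yielding an exponential-time solution. The only difference is that you spell out the bookkeeping (candidate count, per-test cost, soundness) in slightly more detail than the paper does.
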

  
  There is some history to the word and conjugacy problems for \mcg. The
  first solution to the word problem can be attributed to Grossman, whose
  actual contribution is proving residual finiteness for \mcg \cite{Gro74}.
  In \cite{Mos95}, Mosher showed \mcg admits an automatic structure, from
  which a quadratic-time solution to the word problem is obtained. (See
  \cite{ECH92} for a background on automatic groups. It is not yet known if
  a sub-quadratic solution is possible.) In \cite{Hem79}, Hemion solved the
  conjugacy problem for \mcg, but his algorithm is not exponentially
  bounded.  In \cite{Mos86}, Mosher gave a faster algorithm for deciding
  conjugacy among pseudo-Anosov mapping classes. (A similar result was
  recently obtained by Agol in \cite{Ago11}.) Using the work of
  Bestvina-Handel \cite{BH95}, which gives an algorithm for detecting
  pseudo-Anosov mapping classes, Mosher extended his result to compute
  complete conjugacy invariants for all mapping classes \cite{Mos03}. 
  
  Our strategy to prove \thmref{introthmComplexity} is to apply Mosher's
  automaticity result. In general, a solution to the word problem does not
  necessarily yield a solution to the conjugacy problem: it is an open
  question whether all automatic groups have solvable conjugacy problem
  \cite{ECH92}. A sufficient condition is if the group has \emph{linearly
  bounded conjugator (L.B.C.) property} (see theorem below or
  \defref{defLBC}). The main theorem of our paper is that L.B.C.~property
  is satisfied by \mcg. This answers a question in \cite{Far06}.

  \begin{introthm}[L.B.C.~property for \mcg]\label{introthmLBC}

   Let $\Lambda$ be a finite generating set for \mcg. There exists a
   constant $K$, depending only on $\Lambda$, such that if $f, g \in \mcg$
   are conjugate, then there is a conjugating element $\omega$ with \[
   |\omega| \le K \big( |f| + |g| \big). \]

  \end{introthm}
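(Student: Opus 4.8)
The plan is to work in \mg\ rather than in \mcg\ directly. Fix a base marking $\mu$; since \mcg\ acts cocompactly and properly discontinuously on \mg, the orbit map $h\mapsto h\mu$ is a quasi-isometry and $|h|\ulip d_\mg(\mu,h\mu)$. So it suffices to produce a conjugator $\omega$ (that is, $\omega f\omega^{-1}=g$) with $d_\mg(\mu,\omega\mu)\preceq d_\mg(\mu,f\mu)+d_\mg(\mu,g\mu)$. Throughout I would measure distances in \mg\ by the Masur--Minsky distance formula,
\[
  d_\mg(\nu,\nu')\ \ulip\ \sum_{W}\ \big[\, d_W(\nu,\nu')\,\big]_M ,
\]
the sum over essential (possibly annular) subsurfaces $W\subseteq\s$ with a fixed threshold $M$, and control the individual coefficients $d_W$ using hyperbolicity of \cc, the bounded geodesic image theorem, and the Behrstock inequality. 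The goal thus becomes: build $\omega$ so that $d_W(\mu,\omega\mu)\preceq |f|+|g|$ for \emph{every} $W$.

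\textbf{Step 2 (Nielsen--Thurston reduction).}
I would split along the Nielsen--Thurston classification by means of the canonical reduction system $\sigma(f)$: a conjugator must send $\sigma(f)$ to $\sigma(g)$, send the periodic and pseudo-Anosov pieces of $f$ on $\s\setminus\sigma(f)$ to those of $g$, and match the twisting exponents along $\sigma(f)$. Since each complementary piece has strictly smaller complexity, an induction on the complexity of \s\ reduces everything to three base cases: (i) $f$ pseudo-Anosov on \s; (ii) $f$ a (possibly permuting) multitwist on a fixed multicurve; (iii) $f$ of finite order. Case (ii) is governed only by annular and small non-annular projections, which are linear in the twisting exponents and hence $\preceq|f|+|g|$; case (iii) rests on the finiteness of the set of torsion conjugacy classes in \mcg\ together with an estimate of ``position'' (how far a reference curve is forced to move), and, like the others, ultimately reduces to bounding subsurface projections. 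The model is case (i), to which I now turn.

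\textbf{Step 3 (locating a conjugator via the curve complex).}
Assume $f$ pseudo-Anosov, so $g$ is too, and fix any conjugator $\omega_0$. For every $k\in\zz$ the element $\omega_0 f^k$ is again a conjugator, and $(\omega_0 f^k)\mu=g^k(\omega_0\mu)$; so the conjugators $\omega_0 f^k$ trace out the $\langle g\rangle$-orbit of $\omega_0\mu$, a bi-infinite quasi-geodesic in \mg, invariant under the loxodromic $g$. Its image in the hyperbolic complex \cc\ lies uniformly close to the \cc-axis $A_g$ of $g$, and $g$ has stable translation length $\ell_\cc(g)=\ell_\cc(f)$ bounded below by a constant depending only on \s\ (Masur--Minsky) and above by $d_\cc(v,gv)\preceq|g|$ for any component $v$ of $\mu$. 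Hyperbolicity of \cc\ then gives $d_\cc(v,A_g)\preceq|g|$, and since consecutive orbit points are $\asymp\ell_\cc(g)$ apart, there is a $k$ with $d_\cc\!\big(\mu,(\omega_0 f^k)\mu\big)\preceq|f|+|g|$. Replacing $\omega_0$ by this $\omega_0 f^k$, I may assume the conjugator $\omega$ already satisfies $d_\cc(\mu,\omega\mu)\preceq|f|+|g|$; that is, the $W=\s$ term of the distance formula is under control.

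\textbf{Step 4 (the obstacle: proper subsurface projections).}
The main difficulty is that $\omega\mu$ being close to $\mu$ in \cc\ says nothing about \mg: a proper subsurface $W$ may still carry a huge coefficient $d_W(\mu,\omega\mu)$, and these are what the distance formula actually sums. This is where the hierarchy machinery becomes unavoidable. If $W$ contributes, then by the bounded geodesic image theorem $\partial W$ projects close to $\pi_\s(\mu)$ and to a definite vertex of a \cc-geodesic shadowed by a hierarchy from $\mu$ to $\omega\mu$; because $\omega\mu$ lies on the $g$-periodic quasi-axis, the subsurfaces with large coefficient that are met near $\mu$ are confined to a bounded ``active window'', whose distance from $\mu$ along the axis is exactly the quantity bounded in Step 3 by $\preceq|f|+|g|$ --- so $d_W(\mu,\omega\mu)\preceq|f|+|g|$, and all but boundedly many $W$ contribute nothing. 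Turning this heuristic into a uniform bound on $\sum_W[d_W(\mu,\omega\mu)]_M$, carried out simultaneously with the complexity induction of Step 2 and with the finite-order base case, is the technical heart of the argument; the pseudo-Anosov estimate above is the template the remaining cases are built to imitate. Summing over $W$ then gives $|\omega|\ulip d_\mg(\mu,\omega\mu)\preceq|f|+|g|$, which is the claim once the constants are absorbed into $K$ (the additive constant being harmless since $|f|+|g|\ge 1$ unless $f=g$ is trivial, where one takes $\omega=1$).
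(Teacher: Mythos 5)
The skeleton you propose (pass to \mg via the distance formula, split along the Nielsen--Thurston classification, handle pseudo-Anosovs by locating a conjugator along a $g$-invariant quasi-axis in \cc) is also the paper's skeleton, and your Steps 3--4 are essentially a sketch of Masur--Minsky's proof of the pseudo-Anosov case, which the paper simply cites as a known theorem. The genuine gap is the finite-order case, which is the paper's actual content and which your Step 2 disposes of in one clause. Finiteness of the set of torsion conjugacy classes does not by itself give anything --- each class is infinite, and the paper explicitly remarks that this finiteness ``does not a priori help'' with L.B.C. What is needed, and what you do not supply, is a proof that every finite-order $f$ admits a coarse fixed point $\mu\in\mg$ with $d_M(\mu_0,\mu)\prec d_M(\mu_0,f\mu_0)$; only with that in hand can one conjugate $f$ into a fixed finite set with a linearly bounded conjugator. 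This is not a matter of ``bounding subsurface projections'' by imitating the pseudo-Anosov template: a finite-order element is elliptic, has no quasi-axis in \cc and no active window along it, so the Step 3--4 mechanism has nothing to grab onto. The paper's mechanism is entirely different: starting from an arbitrary symmetric fixed point (which exists by Nielsen realization but may be very far from $\mu_0$), one identifies \emph{bad domains} $X$ where $d_X(\mu_0,\mu)$ is too large relative to $\max_i d_{f^i(X)}(\mu_0,f\mu_0)$, proves via time-order in a hierarchy that the entire orbit $X, f(X),\ldots$ of a bad domain is pairwise disjoint, and then modifies the marking symmetrically on that whole orbit at once, iterating so that the maximal complexity of the set of bad domains strictly decreases. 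None of this is present, even in outline, in your proposal.

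A secondary gap is the reducible case. Your Step 2 asserts the complexity induction but does not address the two points the paper has to work for: first, one must conjugate $f$ (and $g$) so that the canonical reducing system sits inside a marking from a fixed finite list, and the conjugator accomplishing this must itself have word length $\prec|f|$ (this requires extending the reducing system to a marking relative to a hierarchy and using the distance formula, not just the finiteness of multicurve types); second, the first-return maps on complementary pieces are only conjugate after correcting for the permutation of the pieces, and the conjugators built piecewise on $\s\setminus\sigma$ must be assembled into a single element of the stabilizer with controlled twisting along $\sigma$. Your treatment of case (ii) as a standalone ``multitwist'' base case also does not match how the twisting is actually controlled, namely by choosing a lift that does not twist along $\sigma$ inside the exact sequence for $\stab_0(\sigma)$.
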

  
  To see how \thmref{introthmComplexity} follows from \thmref{introthmLBC},
  we give an algorithm to the conjugacy problem. Given two arbitrary
  elements $f, g \in \mcg$, let $B$ be the ball of radius $K \big( |f| +
  |g| \big)$ in \mcg. To decide if $f$ and $g$ are conjugate it suffices to
  check if $\omega \in B$ satisfies $\omega f\omega^{-1}g^{-1}=1$. We run
  Mosher's quadratic algorithm to the word problem to all words of the form
  $\omega f \omega^{-1}g^{-1}$ with $\omega \in B$. The number of elements
  in $B$ is an exponential function of the radius, therefore the complexity
  of this solution is an exponential function of the word lengths of $f$
  and $g$.
  
  Linearly-bounded conjugator property is satisfied by hyperbolic groups
  \cite[Lemma 10]{Lys89}, as well as by torsion elements in groups acting on
  CAT(0) spaces \cite[III.1.13]{BH99}. These are important classes of groups
  which have solvable word and conjugacy problems \cite{Gro87,BH99}. Hyperbolic
  groups in fact have efficient algorithms: the word problem is solvable in
  linear time, and the conjugacy problem in quadratic time \cite{BH99}. As
  long as the surface \s has disjoint isotopy classes of curves, \mcg is
  not hyperbolic, as Dehn twists about disjoint curves give rise to higher
  rank free abelian subgroups. It is also known that \mcg does not act on
  any complete $\cat(0)$ space \cite[II.7.26]{BH99}. Nevertheless, \mcg
  shares many properties with hyperbolic groups, and much of the pursuit in
  its study has been to understand to what extent it resembles and differs
  from hyperbolic groups. Establishing L.B.C.~property for \mcg thus
  provides another positive analogy between $\mcg$ and hyperbolic groups.

  After we announced our result, Hamenst\"adt \cite{Ham09} announced
  biautomaticity for \mcg, which generalizes Mosher's automaticity result
  as well as obtains \thmref{introthmComplexity}. Another consequence of
  her work is the exponentially-bounded conjugator property for \mcg.
  Notice, however, that this bound only gives a doubly-exponential solution
  to the conjugacy problem if we use the same algorithm as described below
  \thmref{introthmLBC}, since the search space for the conjugator would
  grow doubly-exponential in terms of the word lengths of the elements.

  \subsection{Idea of the proof of \thmref{introthmLBC}}
  
  The proof of \thmref{introthmLBC} is broken up into three arguments,
  following the classification of the elements of \mcg into pseudo-Anosov,
  reducible, and finite order. The case of the pseudo-Anosov elements was
  settled by Masur-Minsky in \cite{MM00}, using the machinery of hierarchies
  developed in the same paper. This paper resolves the other two cases.

  Surprisingly, it turns out the most delicate case involves the finite
  order elements of \mcg. In many ways, pseudo-Anosov elements of \mcg can
  be viewed as the ``hyperbolic'' elements of \mcg, whereas the finite
  order elements are the ``elliptic'' ones. The methods that Masur and
  Minsky developed are suited for elements which behave more hyperbolically,
  and thus are not effective for the finite order elements. Our main
  contribution is the development of new tools for the study of finite
  order mapping classes. Just as in the case of pseudo-Anosov mapping
  classes, we rely heavily on the machinery of hierarchies, which we need
  to extend so it is more suited to deal with the elliptic geometry.

  We briefly explain how hierarchies are related to words in \mcg. A
  natural model space for \mcg is the marking graph \mg of \s. A marking
  $\mu_B \in \mg$ is a collection of curves on $\s$ satisfying certain
  technical conditions (see \secref{secBackground} for a precise
  definition). Given an element $f \in \mcg$, the image of $\mu_B$ under
  $f$ determines $f$ up to finitely many choices. Being a model space,
  paths from $\mu_B$ to $f\mu_B$ in \mg are naturally associated to words
  representing $f$, and the distance between $\mu_B$ and $f\mu_B$ is
  comparable to the word length of $f$ (fixing a generating set for \mcg).
  Thus to understand the word length of $f$ is the same as understanding
  efficient paths from $\mu_B$ to $f\mu_B$.
  
  Even though $\mg$ (or any other model space) is not hyperbolic or
  $\cat(0)$, there is a coarsely well-defined projection map from $\mg$ to
  a product of hyperbolic spaces $\prod_Z \mathcal{C}(Z)$: each factor
  $\mathcal{C}(Z)$ is the curve complex of a subsurface $Z$ of $\s$, and
  the product is taken over all essential (possibly annular) subsurfaces of
  \s. The fact that each $\mathcal{C}(Z)$ is hyperbolic was established by
  \cite{MM99}. For each such $Z$, the projection map $\pi_Z \colon \mg \to
  \mathcal{C}(Z)$ is obtained by a surgery procedure (see
  \secref{secBackground}). By connecting $\pi_Z(\mu_B)$ and $\pi_Z(f\mu_B)$
  by a geodesic path in $\mathcal{C}(Z)$, one can associate to each element
  $f \in \mcg$ a family of geodesics in curve complexes. These geodesics
  are organized by hierarchies to produce efficient paths connecting
  $\mu_B$ to $f\mu_B$ in \mg. An important consequence of hierarchies is
  the Distance Formula (\thmref{thmDF}), which states that the distance
  between $\mu_B$ and $f\mu_B$ is well approximated by the sum of the curve
  complex distances between their projections, where the sum is taken over
  those subsurfaces to which the projections are sufficiently far apart.

  The hyperbolic geometry of the pseudo-Anosov elements of \mcg is
  exhibited in the fact that they act hyperbolically (with north-south
  dynamics) on the curve complex \cc of \s \cite{MM99,MM00}. This is
  analogous to the way how the infinite order elements of a hyperbolic
  group act on a Cayley graph of the group. Hierarchies were used to build
  quasi-axes in \cc for the action of pseudo-Anosov mapping classes. There
  is also a fellow-traveling type property for hierarchies which applies to
  fellow-traveling quasi-axes. These facts allowed Masur and Minsky to
  extend the proof of L.B.C.~property from infinite-order elements of
  hyperbolic groups to pseudo-Anosov elements of \mcg \cite[Theorem
  7.2]{MM00}.  

  The appropriate analogy for the finite order elements of \mcg are the
  torsion elements of a group $G$ which acts properly and cocompactly on a
  $\cat(0)$ (or hyperbolic) space $X$. We are inspired by the argument
  contained in \cite{BH99} on L.B.C.~property for torsion elements of $G$
  which we will briefly sketch. Let $x \in X$ be a fixed base point. We say
  an element $g \in G$ acts \emph{elliptically} on $X$ if it satisfies two
  conditions. First, $g$ acts on $X$ with (coarse) fixed points. Second,
  the distance from $x$ to the center of mass of the orbit of $x$ under
  $\langle g \rangle$ is comparable to the word length of $g$. When $X$ is
  $\cat(0)$ (or hyperbolic), $g$ is torsion implies $g$ acts elliptically.
  After conjugating $g$ by an appropriate element, its center of mass can
  be moved into a fixed ball containing a fundamental domain for the action
  of $G$ on $X$. The set of torsion elements of $G$ having fixed points
  inside the ball is finite and contains a representative for each
  conjugacy class. From here, one can reduce L.B.C.~property in the
  elliptic case to a finite set. 

  To establish L.B.C.~property for finite order elements of \mcg, we also
  show they act elliptically on \mg. More precisely,  

  \begin{introthm}\label{introthmPeriodic}

    Let $\mu_B \in \mcg$ be a fixed base point. There exist constants $R$
    and $k$ depending only on $\mu_B$ such that the following hold. For any
    finite order element $f \in \mcg$, there exists $\mu \in \mg$ such that
    $\mu$ is an $R$--fixed point of $f$ (i.e.\ $d_{\mg}(\mu,f\mu) \le R$)
    and 
    \begin{align} \label{eqPeriodic} 
       d_{\mg}(\mu_B, \mu) \le k |f|. 
    \end{align}

  \end{introthm}
  
  \begin{introcor}\label{introcorPeriodic}

    There exist a constant $K$, depending only on $\s$, and a finite set of
    elements $\Sigma \subset \mcg$ such that if $f \in \mcg$ has finite
    order, then there exists $\omega \in \mcg$ such that $\omega f
    \omega^{-1} \in \Sigma$ and $|\omega| \le K |f|$.

  \end{introcor}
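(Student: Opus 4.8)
The plan is to deduce \corref{introcorPeriodic} from \thmref{introthmPeriodic} by the classical ``push into a fundamental domain'' argument. The background facts I would use are standard: $\mcg$ acts properly and cocompactly by isometries on the marking graph $\mg$, so the orbit map $g \mapsto g\mu_0$ is a quasi-isometry $\mcg \to \mg$; in particular $|g| \ulip d_{\mg}(\mu_0, g\mu_0)$, with constants depending only on the (fixed) generating set and the (fixed) base marking $\mu_0$. Fix $\mu_0$ once and for all, let $c_0$ be the uniform coarse-fixing constant and $k$ the multiplicative constant furnished by \thmref{introthmPeriodic}, and let $D$ be a cocompactness constant: a number such that every marking lies within $d_{\mg}$-distance $D$ of the $\mcg$-orbit of $\mu_0$. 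All these constants depend only on $\s$ together with the auxiliary choices of $\mu_0$ and the generating set.

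Let $f \in \mcg$ be of finite order. We may assume $f \neq 1$, so $|f| \ge 1$ (otherwise take $\omega = 1$). Apply \thmref{introthmPeriodic} to get a marking $\mu \in \mg$ with $d_{\mg}(\mu, f\mu) \le c_0$ and $d_{\mg}(\mu_0, \mu) \le k|f|$, and choose $\omega \in \mcg$ with $d_{\mg}(\omega\mu, \mu_0) \le D$; this $\omega$ is the conjugator. Since $\omega$ is an isometry, $d_{\mg}(\mu_0, \omega\mu_0) \le d_{\mg}(\mu_0, \omega\mu) + d_{\mg}(\omega\mu, \omega\mu_0) = d_{\mg}(\mu_0, \omega\mu) + d_{\mg}(\mu, \mu_0) \le D + k|f|$. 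Since $|\omega| \ulip d_{\mg}(\mu_0, \omega\mu_0)$ and $|f| \ge 1$, this gives $|\omega| \le K|f|$ for a suitable $K$ depending only on $\s$.

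Finally, set $h := \omega f \omega^{-1}$, which again has finite order. Using that $\omega f$ and then $\omega$ are isometries, $d_{\mg}(\mu_0, h\mu_0) \le d_{\mg}(\mu_0, \omega\mu) + d_{\mg}(\omega\mu, \omega f\mu) + d_{\mg}(\omega f\mu, h\mu_0) = d_{\mg}(\mu_0, \omega\mu) + d_{\mg}(\mu, f\mu) + d_{\mg}(\mu, \omega^{-1}\mu_0) \le 2D + c_0$. So $h$ moves $\mu_0$ a uniformly bounded amount, hence $|h| \le R$ for a constant $R$ depending only on $\s$; letting $\Sigma \subset \mcg$ be the ball of radius $R$ about the identity, which is finite since $\mcg$ is finitely generated, completes the proof. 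Essentially all the difficulty is packaged into \thmref{introthmPeriodic}; within this deduction the only points requiring care are the bookkeeping of constants --- so that $K$ and $\Sigma$ depend on nothing beyond $\s$ and the once-and-for-all choices --- and the harmless reduction to $|f| \ge 1$.
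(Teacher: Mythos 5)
Your proposal is correct and follows essentially the same route as the paper: invoke the coarse-fixed-point theorem, use cocompactness of the $\mcg$-action on $\mg$ to translate the fixed marking into a bounded neighborhood of $\mu_0$, bound $|\omega|$ via the orbit quasi-isometry, and take $\Sigma$ to be a finite set of short finite-order elements (the paper uses $\{g \text{ finite order} : d_{\mg}(\mu_0,g\mu_0)\le 2D+R\}$ rather than a word-metric ball, but by properness these are interchangeable). The only cosmetic difference is that the paper conjugates as $\omega^{-1}f\omega$ and you as $\omega f\omega^{-1}$, which is harmless for a symmetric generating set; your explicit handling of the additive constant via $|f|\ge 1$ is slightly more careful than the paper's use of $\prec$.
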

  
  The proof of \thmref{introthmPeriodic} is the technical part of this
  paper. It is easy to see that there exists a constant $R_1$, depending
  only on \s, such that any finite order element $f \in \mcg$ acts on \mg
  with $R_1$--fixed points. The hard part is finding an $R_1$--fixed point
  of $f$ that lies ``$k$--close'' (in the sense of \eqref{eqPeriodic}) to
  $\mu_B$, for some uniform $k$. Using the projection maps from \mg to
  curve complexes, what we want is to find a marking $\mu \in \mg$ such
  that, for any $Z$, $\pi_Z(\mu)$ lies sufficiently close to the convex
  hull of $\big\{ \pi_Z (f^i\mu_B) \big\}$ (the projection to
  $\mathcal{C}(Z)$ of the orbit of $\mu_B$ under $\langle f \rangle$). To
  find such a $\mu$, our strategy is to take an arbitrary $R_1$--fixed
  point $\mu'$ of $f$ and construct from it a marking $\mu$ (possibly equal
  to $\mu'$) that satisfies \thmref{introthmPeriodic} for appropriate
  constants $k$ and $R$ ($R$ possibly bigger than $R_1$). The construction
  of $\mu$ is through a sequence of modifications on $\mu'$, taken place in
  subsurfaces of \s to which the projections of $\mu'$ is ``far'' from the
  convex hull. (If in every subsurface of \s, the projection of $\mu$ is
  not ``far'' from the convex hull, then $\mu=\mu'$.)  An important part of
  our proof that makes this process work is two technical lemmas
  (\secref{secTwoLemmas}), which show that the symmetries of the action of
  $f$ on \s can be detected by hierarchies. 

  To establish L.B.C.~property for reducible elements of \mcg, we combine
  the two arguments, for pseudo-Anosov elements and for finite order
  elements. If $f \in \mcg$ is a reducible element of infinite order, then
  up to taking powers the surface \s can be decomposed into a collection of
  subsurfaces on which $f$ is either pseudo-Anosov or has finite order. In
  order to apply induction to subsurfaces, we need to built paths from
  $\mu_B$ to $f\mu_B$ in \mg that move only in the complementary
  subsurfaces of the reducing system of $f$. This is possible if the
  initial marking $\mu_B$ contains the reducing system of $f$. However, one
  marking cannot contain all possible reducing systems, even up to
  conjugation. But it suffices to reduce to a finite problem. We show:

  \begin{introthm}\label{introthmReducible}

    There exist a constant $k$ and a finite set of markings $\mathcal{M}$ so
    that if $f \in \mcg$ is reducible, then there exists $\omega \in \mcg$ such
    that the reducing system of $\omega f \omega^{-1}$ is contained in some $\mu
    \in \mathcal{M}$ and $|\omega| \le k|f|$.

  \end{introthm}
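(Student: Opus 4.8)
The plan is to deduce the statement from a single quantitative fact about the canonical reducing system, which I will denote $\sigma(f)$ for a reducible $f$; recall that $\sigma(f)$ is a multicurve, that it is equivariant, $\sigma(gfg^{-1})=g\cdot\sigma(f)$, and that applying this with $g=f$ gives $f\cdot\sigma(f)=\sigma(f)$. For a multicurve $\tau$ put $\mathcal{P}_\tau=\{\mu\in\mg:\tau\subseteq\base(\mu)\}$. Since $\mcg$ acts on $\mg$ with only finitely many orbits of markings, I would fix once and for all a finite set $\m$ of markings meeting every such orbit, and then aim to prove the \emph{Key Lemma}: there is a constant $K_0=K_0(\mu_0)$ with $d_{\mg}(\mu_0,\mathcal{P}_{\sigma(f)})\le K_0|f|$ for every reducible $f$. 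Granting it, choose $\mu_f\in\mathcal{P}_{\sigma(f)}$ with $d_{\mg}(\mu_0,\mu_f)\le K_0|f|+1$, pick $\mu\in\m$ and $\omega\in\mcg$ with $\omega\mu_f=\mu$; then $\sigma(\omega f\omega^{-1})=\omega\cdot\sigma(f)\subseteq\omega\cdot\base(\mu_f)=\base(\mu)$, so the reducing system of $\omega f\omega^{-1}$ lies in an element of $\m$, and since $g\mapsto g\mu_0$ is a quasi-isometry $\mcg\to\mg$ and $\m$ is finite,
\[
  |\omega|\ \lesssim\ d_{\mg}(\mu_0,\omega\mu_0)\ \le\ d_{\mg}(\mu_0,\mu)+d_{\mg}(\omega\mu_f,\omega\mu_0)\ =\ d_{\mg}(\mu_0,\mu)+d_{\mg}(\mu_f,\mu_0)\ \lesssim\ 1+|f| ,
\]
which gives $|\omega|\le K|f|$ since $|f|\ge 1$ for a (nontrivial) reducible $f$.

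Next I would reformulate the Key Lemma through the distance formula (\thmref{thmDF}). For a multicurve $\tau$ one has, up to uniform constants, $d_{\mg}(\mu_0,\mathcal{P}_\tau)\asymp\sum_{Y}[\,d_Y(\mu_0,\tau)\,]_k$, the sum over essential subsurfaces $Y$ with $\pi_Y(\tau)\ne\varnothing$: the base curves and transversals of a near-optimal $\mu\in\mathcal{P}_\tau$ that lie in the complementary pieces of $\tau$, and the transversals of the components of $\tau$, can be taken equal to the corresponding projections of $\mu_0$, so only those $Y$ contribute, while $\pi_Y(\mu)$ must coarsely contain $\pi_Y(\tau)$ for every remaining $Y$. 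Since also $|f|\asymp d_{\mg}(\mu_0,f\mu_0)\asymp\sum_Y[d_Y(\mu_0,f\mu_0)]_k$, the Key Lemma is implied by the technical claim that \textbf{displacement detects the reducing system}: there is a constant $c$ so that for every reducible $\phi\in\mcg$ and every marking $\nu$,
\[
  d_{\mg}\big(\nu,\phi\nu\big)\ \ge\ c^{-1}\,d_{\mg}\big(\nu,\mathcal{P}_{\sigma(\phi)}\big)-c ;
\]
applying this with $\phi=f$, $\nu=\mu_0$ gives exactly the Key Lemma. Note the naive approach fails: since $\phi\cdot\sigma(\phi)=\sigma(\phi)$ one only gets $d_Y(\phi\nu,\sigma(\phi))=d_{\phi^{-1}Y}(\nu,\sigma(\phi))$, and summing the triangle inequality over the subsurfaces overlapping $\sigma(\phi)$ merely reindexes the sum. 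The content is to rule out cancellation.

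To prove the claim I would first pass to a bounded power: there is $N=N(\s)$ so that $\phi^N$ is pure, fixing every component of $\sigma:=\sigma(\phi)$ with orientation and every complementary piece $\s_1,\dots,\s_r$, with $\phi^N=\prod_\alpha T_\alpha^{n_\alpha}\cdot\prod_j\iota_j(\phi_j)$, where $\iota_j\colon\mcg(\s_j)\to\mcg(\s)$ is inclusion and each $\phi_j=\phi^N|_{\s_j}$ is pseudo-Anosov or the identity; as $d_{\mg}(\nu,\phi^N\nu)\le N\,d_{\mg}(\nu,\phi\nu)$ and $\mathcal{P}_{\sigma(\phi^N)}\subseteq\mathcal{P}_\sigma$, it suffices to prove the claim for the pure element. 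For it I would split the distance formula for $d_{\mg}(\nu,\phi^N\nu)$ by type of subsurface: those inside a piece $\s_j$ contribute the internal word lengths, the annuli $A_\alpha$ contribute $\sum_\alpha|n_\alpha|$, and for each $Y$ with $\pi_Y(\sigma)\ne\varnothing$ I would show that applying $\phi^N$ once already imprints $\sigma$ on the projection, i.e.\ $d_Y(\nu,\phi^N\nu)\gtrsim d_Y(\nu,\sigma)-O(1)$: the twist factors $T_\alpha^{n_\alpha}$ drag $\pi_Y(\nu)$ toward $\pi_{A_\alpha}$, while the pseudo-Anosov (resp.\ finite-order) pieces $\phi_j$ spread the part of $\nu$ lying in $\s_j$ until it looks, inside $Y$, like $\partial\s_j\subseteq\sigma$; on the pieces this is where one invokes the pseudo-Anosov case of Masur--Minsky and, for the finite-order pieces, \thmref{introthmPeriodic} together with \corref{introcorPeriodic}, applied inside $\mcg(\s_j)$. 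This last step — controlling the interaction between the $\phi$-action on the subsurfaces overlapping $\sigma(\phi)$ and the canonical reducing system, so as to forbid the cancellation between $d_Y(\nu,\phi^N\nu)$ and $d_{\phi^{-N}Y}(\nu,\sigma)$ — is the crux and the main obstacle; it requires the full hierarchy machinery, adapted along the elliptic directions as in \thmref{introthmPeriodic}. The remaining ingredients (equivariance and finiteness of the canonical reducing system, finiteness of marking orbits, the distance formula, and the quasi-isometry of $\mcg$ with $\mg$) are standard.
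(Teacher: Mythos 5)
Your reduction is the right one and matches the paper's: the whole theorem follows from the Key Lemma $d_{\mg}\big(\mu_0,\mathcal{P}_{\sigma(f)}\big)\prec|f|$, via equivariance of the canonical reducing system and finiteness of $\mg/\mcg$; your derivation of the statement from that lemma is essentially the paper's proof of \thmref{thmReducible} verbatim, as is the first step of passing to a bounded pure power $f^{N}$. The problem is that the step you yourself flag as ``the crux and the main obstacle'' is a genuine gap, and the sketch you give for closing it points at the wrong mechanism. The finite-order and identity pieces of the pure power do not ``spread'' or imprint anything, and \thmref{introthmPeriodic} and \corref{introcorPeriodic} play no role in this theorem. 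What actually makes the Key Lemma true is the minimality characterization of the \emph{canonical} reducing system: every $\alpha\in\sigma_f$ either bounds a complementary piece on which the pure power is pseudo-Anosov, or is a curve about which the pure power twists nontrivially (properties (H1)/(H2) in \secref{secReducible}); a curve adjacent only to identity pieces would be deleted by minimality. Without this input your per-subsurface inequality $d_Y(\nu,\phi^N\nu)\gtrsim d_Y(\nu,\sigma)-O(1)$ has no reason to hold, and even with it you have not supplied the argument that rules out the cancellation you correctly identify as the issue.

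The paper's execution is also lighter than what you propose. Having chosen a uniform power $N_0$ so that each pseudo-Anosov piece and each twisting annulus $Y$ adjacent to a curve of $\sigma_f$ satisfies $d_Y(\mu,f^{N_0}\mu)\ge M_2$, \lemref{lemLargeLink} shows every $\alpha\in\sigma_f$ is the core or a boundary curve of a domain of a geodesic in $H(\mu,f^{N_0}\mu)$; hence the marking extension $\mu'$ of $\sigma_f$ relative to $f^{N_0}\mu$ (\defref{defMarkingExtension}) projects, in every domain, uniformly close to the segment $\big[\pi_Y(\mu),\pi_Y(f^{N_0}\mu)\big]$, and the distance formula gives $d_M(\mu,\mu')\prec d_M(\mu,f^{N_0}\mu)\prec|f|$ directly. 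This sidesteps the stronger ``displacement detects the reducing system'' inequality you aim for, which is not needed for any $Y$ individually and would require additional work (nested applications of \thmref{thmBGI}) to justify. To repair your proposal, replace the appeal to the elliptic machinery by the (H1)/(H2) characterization of $\sigma_f$ together with \lemref{lemLargeLink} and the relative marking extension.
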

   
  Finally, each case in the classification will produce a different
  constant. The proof of L.B.C.~property for \mcg will be completed by
  taking a maximum over the three constants.

  The organization of the paper is as follows.
  \begin{itemize}
    
    \item In \secref{secBackground}, we review basic definitions and the
    theory of hierarchies. A key notion that we will introduce is the
    notion of \emph{separating markings} in \secref{secSeparating}.

    \item In \secref{secTwoLemmas}, we give a couple of definitions and
    prove two technical lemmas about finite order mapping classes which
    will be useful for the next section. We also construct an example which
    motivates this section and the next section.
    
    \item In \secref{secFinite}, we prove \thmref{introthmPeriodic} and
    derive L.B.C.~property for finite order mapping classes.

    \item In \secref{secReducible}, we prove \thmref{introthmReducible} and
    use the known results for pseudo-Anosov and finite order elements to
    derive L.B.C.~property for infinite order reducible mapping classes.

  \end{itemize}
  
  \subsection{Acknowledgments} 
  
  The author would like to thank her thesis advisor, Howard Masur, for his
  excellent guidance and for suggesting the problem. She would also like to
  thank Kasra Rafi for helpful conversations throughout this project, and
  also Benson Farb, Daniel Groves, and Chris Leininger for helpful
  suggestions and their interest in this work. The author is also grateful
  to the referee for many helpful comments.

\section{Preliminaries}

  \label{secBackground}

  In this section, we develop the background material for the paper. Our
  main tool will be Masur and Minsky's theory of hierarchies. From
  \secref{secHierarchies} to \secref{secTimeOrder}, we will summarize the
  properties of hierarchies that will be needed for this paper. Some of the
  definitions will be merely sketched and most of the proofs will be
  omitted. We refer the reader to Masur-Minsky's paper \cite{MM00} for more
  details. We also refer to \cite{FLP79} and \cite{FM12} for general references
  on mapping class groups and the topology of surfaces, and to \cite{Gro87}
  and \cite{BH99} for references on $\delta$-hyperbolic spaces.

  \subsection{Arcs, curves, surfaces and subsurfaces} 
  
  Let $\s= \s_{g,p}$ be a connected, oriented surface of genus $g$ with $p$
  punctures. We call $\xi(\s) = 3g - 3 + p $ the \emph{complexity} of \s.
  Surfaces of complexity strictly greater than $1$ are called
  \emph{generic} surfaces. Surfaces of complexity $1$ are called
  \emph{sporadic} and they are topologically either the four-holed sphere
  or one-holed torus. Two remaining low-complexity cases are
  \emph{exceptional surfaces}. Complexity $0$ is the three-holed sphere or
  a \emph{pair of pants}, and complexity $-1$ is topologically an annulus.

  Throughout this paper we will be working with a generic surface $\s$
  without boundary. But sporadic and exceptional surfaces and surfaces with
  boundary naturally arise as subsurfaces of \s, and thus are important for
  induction arguments.  

  An \emph{essential curve} or just a \emph{curve} on \s will always mean
  the free isotopy class of a simple closed curve, which is not
  null-homotopic or homotopic to a puncture or a boundary component. A
  \emph{multicurve} or a \emph{curve system} will mean a finite collection
  of distinct curves that can be realized disjointly. A \emph{pants
  decomposition} of \s is a maximal curve system $c$ on \s. In particular,
  each component of $\s \setminus c$ is topologically a pair of pants. Note
  that a pants decomposition exists for \s if and only if $\xi(\s) \ge 1$,
  in which case the cardinality of $c$ is equal to $\xi(\s)$. 
  
  To talk about arcs we need \s to have boundary. An \emph{arc} on \s will
  be an isotopy class of a simple arc $\delta$, with isotopies relative to
  the boundary, such that $\delta$ has both endpoints on $\partial \s$ and
  is not isotopic to a boundary component. 
  
  The \emph{(geometric) intersection number} $i(\alpha, \beta)$ of a pair
  of curves $\alpha$ and $\beta$ will be the minimal number of
  intersections among representatives of $\alpha$ and $\beta$. The
  geometric intersection number between two arcs on \s will be the minimal
  number of intersections in the interior of \s modulo isotopies relative
  to $\partial \s$. Note that intersection number of an arc or curve with
  itself is always zero. 

  A \emph{subsurface} $Y$ of \s is the isotopy class of a closed and
  connected subsurface of \s which is incompressible and non-peripheral. We
  include the possibility that $Y = \s$ unless we say a \emph{proper}
  subsurface. By $\partial Y$ we will mean the multicurve comprised of the
  boundary components of a representative of $Y$. An annular subsurface $A$
  of \s is a regular neighborhood of a curve $\alpha$ with simple
  boundaries. We will often abuse terminology by confusing $A$ with its
  \emph{core curve} $\alpha$, and refer to $\alpha$ as a subsurface of \s
  as well. In this case, $\partial A$ will mean $\alpha$. For reasons we
  shall see, we will distinguish subsurfaces that are not pants, called
  \emph{essential subsurfaces} or \emph{domains}.

  Given a curve $\alpha$ and a non-annular domain $Y$ of \s, we will say
  $\alpha$ is disjoint from $Y$ if it can be homotoped away from a
  representative of $Y$. Note that this includes the case that $\alpha$ is
  a curve in $\partial Y$. If $\alpha$ can be realized as an essential
  curve in a representative of $Y$, then we will say $\alpha$ is a curve in
  $Y$. In all other cases, we will say $\alpha$ \emph{crosses} $Y$. For an
  annular domain $A$ with core curve $\beta$, then we have the
  possibilities that $\alpha$ is disjoint from $A$ if $\alpha$ and $\beta$
  are disjoint, or $\alpha$ crosses $A$ if $\alpha$ and $\beta$ intersect. 
  
  Similarly, given two domains $Y$ and $Z$ of \s, we will say $Y$ and $Z$
  are: \emph{disjoint} if $Y$ and $Z$ can be homotoped to be disjoint from
  each other; \emph{nested} if $Y$ and $Z$ can be homotoped so that either
  $Y$ is contained in $Z$ or $Z$ is contained in $Y$; and \emph{interlock}
  if they are neither disjoint or nested. Note that when $Y=A$ is an
  annular domain with core curve $\alpha$, then $A$ and $Z$ being disjoint
  is consistent with $\alpha$ and $Z$ being disjoint, $A$ and $Z$ are
  nested if $\alpha$ is contained $Z$ (and $Z$ is not annular), and,
  finally, $A$ and $Z$ interlock if $\alpha$ crosses $Z$.

  \subsection{Mapping class groups} 
  
  Let $\homeo^+(\s)$ be the group of orientation-preserving
  self-homeomorphisms of \s. The \emph{mapping class group} of \s is \[
  \mathcal{MCG}(\s) = \homeo^+(\s)/\sim \] where $f \sim g$ if and only if
  $g^{-1} \circ f$ is isotopic to the identity map on \s. Elements of \mcg
  are called \emph{mapping classes}. It is well-known that \mcg is finitely
  generated (and finitely presented) \cite{Lic64}. For this paper, we will
  fix a finite generating set $\Lambda$ of \mcg. We will often regard \mcg
  as a metric space by considering the word metric $|\cdot| =
  |\cdot|_\lambda$ induced by $\Lambda$. 
  
  If \s is a once-punctured torus or four-times punctured sphere, then \mcg
  is commensurable to $\Sl(2,\zz)$. The mapping class group of a
  thrice-punctured sphere is finite. For us, an annulus $A$ will always
  appear as a regular neighborhood of a simple closed curve on an ambient
  surface, so $A$ has two boundary components. Let $\mathcal{MCG}(A,
  \partial A)$ be the group of isotopy classes of homeomorphisms of $A$
  relative to $\partial A$. One checks that $\mathcal{MCG}(A, \partial A)$
  is homeomorphic to $\zz$.  
  
  \begin{definition}[L.B.C.~property] \label{defLBC}

  Given a finitely generated group $G$ equipped with a finite generating
  set $\Lambda$, we say a conjugacy class $\mathfrak{c}$ of $G$ has
  \emph{linearly bounded conjugators} if for any $f, g \in \mathfrak{c}$,
  there exists a conjugating element $\omega \in G$ such that \[ |\omega|
  \le K_\mathfrak{c} \big(|f| + |g|\big), \] where $|\cdot|$ represent the
  word length in $\Lambda$, and $K_\mathfrak{c}$ depends only on
  $\mathfrak{c}$ and $\Lambda$. If $K = K_\mathfrak{c}$ can be taken to be
  independent of the conjugacy class $\mathfrak{c}$, then we say $G$ has
  \emph{linearly bounded conjugator property} or \emph{L.B.C.~property}. If
  $G$ has L.B.C.~property for $\Lambda$, then changing $\Lambda$ to any
  other finite generating set changes $K$ by a bounded amount. Therefore,
  this definition is independent of the choice of the generating set, so
  $\Lambda$ can always be taken to be a symmetric generating set.

  \end{definition}
  
  Mapping class groups of non-generic surfaces satisfy L.B.C.~property. We
  would like to show the same is true for mapping class groups of generic
  surfaces. The first observation is that the Nielsen-Thurston
  classification of mapping classes is a conjugacy invariant. This means
  that we can argue for L.B.C.~property separately for each type. We refer
  to \cite{Thu88} and \cite[\S 13]{FM12} for more details on the
  classification theorem. Recall that a mapping class $f$ is called
  \emph{irreducible} if $f$ does not fix any multicurve (setwise);
  otherwise $f$ is called \emph{reducible}.  The following statement
  applies to all surfaces \s. 
  
  \begin{theorem}[Nielsen-Thurston classification for
  \mcg] \label{thmNTCMCG}
    
    Every element $f \in \mcg$ is either pseudo-Anosov, periodic (finite
    order), or reducible. Furthermore, for each $f \in \mcg$, there exists
    a (possibly empty) multicurve $\sigma$ invariant under $f$ with the
    following property. Let $Y_1, \ldots, Y_k$ be the connected components
    of $\s \setminus \sigma$, and, for each $i$, choose the smallest $n_i
    \in \nn$ so that $f^{n_i}(Y_i) = Y_i$. Then for any $i$,
    $f^{n_i}|_{Y_i}$ either has finite order or is pseudo-Anosov.

  \end{theorem}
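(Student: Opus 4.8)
We take the first assertion---the trichotomy of $f$ into pseudo-Anosov, periodic, and reducible---to be the classical Nielsen--Thurston theorem, and I would only indicate its proof, citing \cite{Th2}, \cite{FLP}, and \cite[\S 13]{FM} for the details. The element $f$ acts by a homeomorphism on Thurston's compactification $\overline{\mathcal{T}(\s)} = \mathcal{T}(\s) \cup \mathcal{PML}(\s)$, which is homeomorphic to a closed ball, so by the Brouwer fixed point theorem it fixes some point $x$. If $x \in \mathcal{T}(\s)$, then $f$ lies in the stabilizer of a point of Teichm\"uller space, which is the (finite) isometry group of a hyperbolic surface, so $\langle f \rangle$ is finite and $f$ is periodic. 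If $x \in \mathcal{PML}(\s)$, represented by a lamination $\mathcal{L}$, then either $\mathcal{L}$ fills \s, in which case $f$ also fixes a transverse class on the boundary and the two jointly produce a pseudo-Anosov representative of $f$; or $\mathcal{L}$ does not fill, in which case a boundary curve of a regular neighborhood of its support (or a closed leaf of $\mathcal{L}$) is an essential multicurve fixed by $f$, so $f$ is reducible.

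For the refinement I would take $\sigma$ to be the canonical reduction system of $f$ in the sense of Birman--Lubotzky--McCarthy (see \cite[\S 13]{FM}); this is the $f$-invariant multicurve the statement really concerns, and it is empty precisely when $f$ is pseudo-Anosov or periodic. Since $f$ fixes $\sigma$ setwise and is a homeomorphism, it permutes the components $Y_1, \dots, Y_k$ of $\s \setminus \sigma$. For each $i$, the least $n_i \in \nn$ with $f^{n_i}(Y_i) = Y_i$ is the size of the $f$-orbit of $Y_i$, and $f^{n_i}$ restricts to a self-homeomorphism of $Y_i$ (possibly permuting the boundary curves of $Y_i$, which are components of $\sigma$). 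Its isotopy class $f^{n_i}|_{Y_i}$ is then a well-defined element of the mapping class group $\mathcal{MCG}(Y_i)$ of the bordered surface $Y_i$.

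Applying the trichotomy inside $\mathcal{MCG}(Y_i)$, the class $f^{n_i}|_{Y_i}$ is pseudo-Anosov, periodic, or reducible; I would rule out the remaining possibility of an infinite-order reducible class. If $f^{n_i}|_{Y_i}$ were such, its canonical reduction system $\tau_i \subset Y_i$ would be a nonempty essential multicurve, and $\sigma \cup \bigcup_{j=0}^{k-1} f^j(\tau_i)$ (closing $\tau_i$ up under the $f$-orbit of $Y_i$) would be an $f$-invariant multicurve properly containing $\sigma$, whose new curves are essential in $\s \setminus \sigma$. This contradicts the defining property of the canonical reduction system, so $f^{n_i}|_{Y_i}$ is periodic or pseudo-Anosov. (On a sporadic or exceptional component $Y_i$ there is no contradiction to resolve: $\mathcal{MCG}(Y_i)$ is trivial on a pair of pants, and sporadic pieces still satisfy the trichotomy; annuli do not occur since the curves of $\sigma$ are pairwise non-isotopic.)

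The only real obstacle is in the refinement, and it is conceptual rather than computational: one must identify the precise multicurve for which the conclusion holds, since an \emph{arbitrary} $f$-invariant multicurve will not do---for instance, if $f$ is a product of Dehn twists along disjoint non-isotopic curves and $\sigma$ contains only some of them, the first-return map on a complementary piece can be a single Dehn twist, which is reducible of infinite order. The correct choice is the canonical reduction system, and the heart of the argument is exactly the characterization that its complementary first-return maps carry no further invariant multicurve. The remaining points---that $f$ permutes the complementary components and that the first-return maps are well defined in the mapping class group of a surface with boundary---are routine.
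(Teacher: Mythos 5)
The paper offers no proof of this statement: it is quoted as classical background, with pointers to \cite{Th2} and \cite[\S 13]{FM}, and the only gloss the paper adds is that any $\sigma$ satisfying the conclusion is called a reducing system and that a minimal such $\sigma$ is the canonical one. So there is nothing internal to compare against; your sketch of the trichotomy via Brouwer on the Thurston compactification is the standard argument and is fine at the level of detail the paper itself operates at. You also correctly diagnose the imprecision in the statement --- the conclusion fails for an arbitrary invariant multicurve, and the theorem must be read existentially --- which is exactly how the paper uses it.

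The one soft spot is in your refinement. Having chosen $\sigma$ to be the canonical reduction system, you enlarge it by the $f$-orbit of a reduction system $\tau_i$ of an infinite-order reducible first-return map and claim this ``contradicts the defining property of the canonical reduction system.'' It does not: the canonical system is \emph{minimal} (the set of essential reduction classes, contained in every maximal reduction system), so it is perfectly consistent for it to sit properly inside a larger $f$-invariant multicurve. Either your contradiction is with maximality, which you did not assume, or it is with the Birman--Lubotzky--McCarthy theorem that the canonical system is itself adequate --- which is precisely the statement being proved, making the argument circular unless you cite BLM outright. The clean fix for the existence statement (which is all the paper needs) is to take $\sigma$ \emph{maximal} among $f$-invariant multicurves: then your enlargement argument, verbatim, contradicts maximality, since the curves of $\bigcup_j f^j(\tau_i)$ are essential in the complementary pieces and hence not isotopic to curves of $\sigma$ or to each other. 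If you insist on proving the conclusion for the canonical system itself, you must invoke or reprove the BLM characterization, as you acknowledge in your closing paragraph.
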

  
  The multicurve $\sigma$ satisfying \thmref{thmNTCMCG} $f$ is called a
  \emph{reducing system} for $f$. For each $Y_i \in \s \setminus \sigma$,
  the map $f^{n_i}$ is called the \emph{first return map of $f$ to $Y_i$}.
  Note that the first return map of $f$ to $Y_i$ means exactly that
  $f^{n_i}|_{Y_i}$ can be viewed as an element of $\mathcal{MCG}(Y_i)$. The
  content of the classification can be rephrased to say $f$ is
  pseudo-Anosov if and only if $f$ is irreducible of infinite order.
  
  \begin{definition}[Canonical reducing systems]\label{secCanonical} 
    
    By choosing $\sigma$ to be a minimal collection of curves satisfying
    \thmref{thmNTCMCG}, then $\sigma = \sigma_f$ is unique up to isotopy
    and is called the \emph{canonical reducing system} for $f$. If $f \in
    \mcg$ is either pseudo-Anosov or finite order, then $\sigma_f =
    \emptyset$. (See \cite{Mos07}).
      
  \end{definition}
  
  In \cite[\S 7]{MM00}, Masur-Minsky established L.B.C.~property for the
  pseudo-Anosov elements of \mcg.

  \begin{theorem}[L.B.C.~property for pseudo-Anosov mapping
    classes]\label{thmPA}
   
    There exists a constant $K$, depending only on \s, such that if $f, g
    \in \mcg$ are conjugate pseudo-Anosov mapping classes, then there is a
    conjugating element $\omega \in \mcg$ with \[ |\omega| \le K \big( |f|
    + |g| \big). \]

  \end{theorem}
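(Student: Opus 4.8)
The plan is to adapt the classical argument for infinite-order elements of a hyperbolic group (cf.\ \cite[Lemma 10]{Lys} and \cite{BH}), replacing the hyperbolicity of the Cayley graph---which $\mcg$ lacks---by the fellow-traveling theory of hierarchies \cite[\S 7]{MM2}. Fix a base marking $\mu_0$; the orbit map $\omega \mapsto \omega\mu_0$ is a quasi-isometry $\mcg \to \mg$ with constants depending only on $\Lambda$, and throughout $\asymp$, $\lesssim$ hide such constants. The starting remark is that if $g = \omega f \omega^{-1}$, then $g^{-m}\omega$ is again a conjugating element for every $m \in \zz$, and $d_{\mg}(\mu_0, g^{-m}\omega\mu_0) = d_{\mg}(g^{m}\mu_0, \omega\mu_0)$ since $g^m$ acts isometrically. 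Hence if $m$ realizes $\min_{n}d_{\mg}\big(\omega\mu_0,g^{n}\mu_0\big)$ and $\omega' := g^{-m}\omega$, then $|\omega'| \asymp d_{\mg}\big(\omega\mu_0,\{g^{n}\mu_0 : n \in \zz\}\big)$; so it suffices to bound this distance by $\lesssim |f| + |g|$.

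The tool I would build is a quasi-axis in \mg\ for a pseudo-Anosov $h$: concatenate the $h$-translates of a hierarchy from $\mu_0$ to $h\mu_0$ and resolve the resulting bi-infinite hierarchy to a path $\mathrm{Ax}(h) \subset \mg$. Using the distance formula \thmref{thmDF} and the uniform positive lower bound for the translation length of a pseudo-Anosov on $\cc$ \cite{MM1}, this is a quasi-geodesic with uniform constants, invariant under $h$, and---by the hierarchy estimates of \cite[\S 7]{MM2}---contracting: nearest-point projection to it is coarsely well defined and coarsely Lipschitz. I record three uniform consequences. (i) \emph{Equivariance}: $\mathrm{Ax}(\omega h\omega^{-1}) \asymp \omega\cdot\mathrm{Ax}(h)$, immediate from the definition. (ii) \emph{Orbits fellow-travel the axis}: $\{h^{n}\mu_0\}$ has Hausdorff distance $\lesssim d_{\mg}(\mu_0,\mathrm{Ax}(h)) + \tau(h)$ from $\mathrm{Ax}(h)$, with $\tau(h)$ the translation length (one inclusion because $\mathrm{Ax}(h)$ is $h$-invariant, the other because $h$ translates along $\mathrm{Ax}(h)$). (iii) \emph{A hyperbolic-type estimate}: $d_{\mg}(\mu_0,\mathrm{Ax}(h)) \lesssim |h|$; indeed $\mu_0$ and $h\mu_0$ are equidistant from the $h$-invariant quasi-axis $\mathrm{Ax}(h)$, so contraction forces a path between them to pass near $\mathrm{Ax}(h)$, giving $|h| \asymp d_{\mg}(\mu_0,h\mu_0) \gtrsim 2\,d_{\mg}(\mu_0,\mathrm{Ax}(h)) - O(1)$; since also $\tau(h) \le |h|$, the Hausdorff distance in (ii) is in fact $\lesssim |h|$.

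Now let $g = \omega f\omega^{-1}$ with $f$ (hence $g$) pseudo-Anosov. By (i), $\mathrm{Ax}(g) \asymp \omega\cdot\mathrm{Ax}(f)$, so
\[
  d_{\mg}\big(\omega\mu_0,\ \mathrm{Ax}(g)\big) \ \asymp\ d_{\mg}\big(\omega\mu_0,\ \omega\cdot\mathrm{Ax}(f)\big) \ =\ d_{\mg}\big(\mu_0,\ \mathrm{Ax}(f)\big) \ \lesssim\ |f|
\]
by (iii) for $f$, while (ii)--(iii) for $g$ give $d_{\mathrm{Haus}}\big(\mathrm{Ax}(g),\ \{g^{n}\mu_0\}\big) \lesssim |g|$. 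Hence
\[
  d_{\mg}\big(\omega\mu_0,\ \{g^{n}\mu_0\}\big) \ \le\ d_{\mg}\big(\omega\mu_0,\ \mathrm{Ax}(g)\big) + d_{\mathrm{Haus}}\big(\mathrm{Ax}(g),\ \{g^{n}\mu_0\}\big) \ \lesssim\ |f| + |g|,
\]
and by the first paragraph we obtain a conjugating element $\omega'$ with $|\omega'| \le K(|f| + |g|)$ for some $K = K(\Lambda)$.

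I expect the main obstacle to be supplying the input about the pseudo-Anosov quasi-axis in \mg: since \mg\ is neither hyperbolic nor $\cat(0)$, the contraction/stability facts used in (ii), (iii) and in the very definition of $\mathrm{Ax}$ are not formal and require the full hierarchy machinery of \cite[\S 7]{MM2}. In particular one must check that every hierarchy constant entering the argument is uniform in $f$, so that the only dependence on $f$ and $g$ is the explicitly linear one arising from $d_{\mg}(\mu_0,\mathrm{Ax}(f))$ and $d_{\mg}(\mu_0,\mathrm{Ax}(g))$; this is precisely the content of \cite[Theorem 7.2]{MM2}.
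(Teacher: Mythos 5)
The paper does not prove this statement itself---it imports it as \cite[Theorem 7.2]{MM2}---and your sketch is essentially a reconstruction of that Masur--Minsky argument: reduce to bounding the distance from $\omega\mu_0$ to the $g$-orbit, then use hierarchy-built quasi-axes in \mg, their equivariance, and their stability/contraction properties, deferring the non-formal inputs to \cite[\S 7]{MM2} exactly as the paper does. So the proposal is correct and takes essentially the same approach as the paper.
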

  
  Our goal in this paper is to prove L.B.C.~property for the finite order
  and reducible elements of \mcg. The argument for finite order mapping
  classes is the hard part of this paper. The argument for reducible
  mapping classes is inductive and will make use of the canonical reducing
  system. 
  
  \subsection{Complexes of curves} \label{secCC} 
  
  The \emph{complex of curves} \cc on a surface \s is a locally-infinite,
  finite dimensional simplicial complex on which \mcg acts by
  automorphisms. Its definition first appeared in \cite{Har81}. We treat
  generic, sporadic, and exceptional surfaces separately.  
  \begin{itemize}

    \item \textbf{Generic surfaces.} Suppose \s has $\xi(\s) > 1$. The
    $k$--th skeleton $\mathcal{C}_k(\s)$ consists of all curve systems on
    \s of cardinality $k+1$. There is an obvious inclusion of
    $\mathcal{C}_{k-1}(\s) \hookrightarrow \mathcal{C}_{k}(s)$ by face
    relations. Top dimensional simplices of $\mathcal{C}(\s)$ correspond to
    pants decompositions on \s, hence $\dim(\mathcal{C}(\s)) = \xi(\s) -
    1$.   
    
    \item \textbf{Sporadic surfaces.} With the above definition, the curve
    complex of a sporadic surface \s would be a disconnected set of points.
    To construct a more useful object, we modify the definition to allow
    two vertices in \cc span an edge if they intersect minimally over \s
    (once for one-holed torus and twice for four-holed sphere). It is a
    classical theorem that with this definition \cc is isomorphic to the
    Farey graph \cite{HT80,Min96b}.

    \item \textbf{Pants.} A pair of pants has no essential curves. Here we
    do not modify the definition and let the curve complex of pants be
    empty. This is the reason why we do not consider pants to be essential
    subsurfaces.

    \item \textbf{Annuli.} An arbitrary annulus has no essential curves.
    But for us, an annulus $A$ will always appear as a regular neighborhood
    of a curve $\gamma$ in a larger surface \s, and we would like
    $\mathcal{C}(A)$ (or $\mathcal{C}(\gamma))$ to record twist information
    about $\gamma$. Vertices of $\mathcal{C}(A)$ will be properly embedded
    arcs and two arcs are connected by an edge if they can be isotoped rel
    endpoints to have disjoint interiors. 
    
    \end{itemize}

  By an element or subset of \cc we will always mean an element or subset
  of $\mathcal{C}_0(\s)$.  We make \cc into a complete geodesic metric
  space by endowing each simplex with an Euclidean structure with edge
  lengths 1. From the perspective of coarse geometry, we do not lose
  anything by identifying \cc with its $1$-skeleton. We denote by
  $d_{\cc}$, or more simply by $d_\s$, the shortest distance in
  $\mathcal{C}_1(\s)$ between two vertices. If $A$ is an annulus with a
  core curve $\gamma$, we will also use the notation $d_\gamma$ or $d_A$ to
  denote distances in $\mathcal{C}(A)$. For any surface \s including
  annuli, induction on intersection number can be used to show \cc is
  connected, and $d_\s(\alpha, \beta) \le 2 i(\alpha,\beta)+1$ (see
  \cite{MM99,MM00}). The simplicial action of \mcg on \cc preserves this
  metric. The action is not proper. The quotient $\cc/\mcg$ parametrizes
  curves on \s up to homeomorphisms, hence it is finite. 
  
  For a generic surface \s, $d_\s$ coarsely measures the complexity between
  two curves in the following sense: $d_\s(\alpha,\beta) = 1$ if and only
  if $\alpha$ and $\beta$ are disjoint; $d_\s(\alpha,\beta) = 2$ if and
  only if $\alpha$ and $\beta$ cohabit a proper subsurface $Y \subset \s$;
  $d_\s(\alpha,\beta) \ge 3$ if and only $\alpha$ and $\beta$ \emph{fill}
  \s, or the complement of their union in \s does not support any essential
  curve.   
  
  The following theorem in \cite{MM99} gives us some geometric control over
  paths in \cc. 
  
  \begin{theorem}[{\cite{MM99}}]\label{thmHyperbolic}
        
    For any surface \s that is not a pair of pants, \cc has infinite
    diameter and is $\delta$--hyperbolic.

  \end{theorem}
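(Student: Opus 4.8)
The plan is to follow Masur and Minsky \cite{MM1}, whose proof splits into two essentially independent halves: a purely metric hyperbolicity criterion, and a verification of its hypotheses for \cc using Teichm\"uller geometry. The criterion \cite[\S 2]{MM1} states that a geodesic metric space $X$ is $\delta$--hyperbolic, with $\delta$ depending only on the constant below, provided one can join each pair $x, y \in X$ by a path $\gamma_{xy}$ so that, for some uniform $D$: (a) each $\gamma_{xy}$, parametrized by arclength, is coarsely $1$--Lipschitz (a ``$D$--coarse path''); (b) $\diam(\gamma_{xy}) \le D$ whenever $d(x,y) \le 1$; and (c) each triangle $\gamma_{xy} \cup \gamma_{yz} \cup \gamma_{zx}$ is $D$--thin, i.e.\ each side lies in the $D$--neighborhood of the union of the other two. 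One then also gets that every $\gamma_{xy}$ is a uniform quasigeodesic. The value of this reformulation is that honest geodesics of \cc are hopelessly non-unique, so the path family must instead be manufactured from a more rigid object, namely Teichm\"uller space $\mathcal{T}(\s)$.

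To produce the family, fix a small $\epsilon_0 > 0$ and use the coarsely well-defined \emph{systole map} $\Psi \from \mathcal{T}(\s) \to \cc$ sending a hyperbolic structure $X$ to a shortest simple closed geodesic on $X$. One checks that $\Psi$ is coarsely Lipschitz --- via the collar lemma, which bounds the intersection number of any two curves of bounded length --- and that every $v \in \cc$ is $\Psi(X_v)$ for some $X_v$ on which $v$ has length $\le \epsilon_0$. Given $\alpha, \beta \in \cc$, let $\gamma_{\alpha\beta}$ be the $\Psi$--image of the Teichm\"uller geodesic segment from $X_\alpha$ to $X_\beta$. Properties (a) and (b) follow quickly from coarse Lipschitzness of $\Psi$ together with the elementary bound $d_\s(\alpha, \beta) \le i(\alpha, \beta)$ recorded above.

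Property (c) is the main obstacle and the technical core of \cite{MM1}. It is deduced from a \emph{no-backtracking} statement for Teichm\"uller geodesics: if a curve $v$ has length $\le \epsilon_0$ at some time along a Teichm\"uller geodesic $\mathcal{G}$, then the set of such times is coarsely a single interval $I_v$, on which $\Psi(\mathcal{G})$ stays within bounded $\cc$--distance of $v$, while off a bounded neighborhood of $I_v$ the length of $v$ exceeds a definite $\epsilon_1 > \epsilon_0$. Proving this combines the collar lemma (two simultaneously very short curves are disjoint) with Minsky's product-region description of the thin part of $\mathcal{T}(\s)$ and a fellow-traveling estimate for Teichm\"uller geodesics passing through a fixed thin part; the governing mechanism is that the length of $v$ along $\mathcal{G}$ is, up to bounded error, $\max\{e^{t}\, i(v, \mathcal{F}^-),\ e^{-t}\, i(v, \mathcal{F}^+)\}$, which is small only on a bounded interval of times. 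Granting this, take a triangle of Teichm\"uller geodesics with shared endpoints and a point $p$ on one side; choose a curve $v$ that is $\epsilon_0$--short at the Teichm\"uller point over $p$, locate where $v$ is $\epsilon_0$--short along the other two sides using their (necessarily overlapping) active intervals, and conclude that $p$ is uniformly close in \cc to one of those two sides. This is (c).

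Finally, for infinite diameter: by Thurston there exists a pseudo-Anosov $\phi \in \mcg$, and along a Teichm\"uller geodesic ray $\mathcal{G}$ directed by the minimal filling unstable foliation of $\phi$, each curve is $\epsilon_0$--short only on its own bounded active interval, so infinitely many distinct curves $v_1, v_2, \dots$ take turns being the systole; the no-backtracking estimate, together with the fact (from the first part) that these $\Psi$--images are uniform quasigeodesics, then forces $d_\s(v_i, v_j) \to \infty$ as $|i - j| \to \infty$, so $\Psi(\mathcal{G})$ --- hence \cc --- has infinite diameter. (Equivalently, $\liminf_n d_\s(v, \phi^n v)/n > 0$.) We note that alternative proofs now exist --- Bowditch's via tight geodesics and the arc complex, and the Hensel--Przytycki--Webb ``unicorn arc'' argument, which bypasses Teichm\"uller theory and yields an explicit uniform $\delta$ --- but the approach sketched above is that of the cited reference.
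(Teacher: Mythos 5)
The paper does not actually prove this statement: it imports it wholesale from \cite{MM1}, adding only the remarks immediately following the theorem that the sporadic case reduces to the Farey graph and the annular case to the fact that $\mathcal{C}(A)$ is quasi-isometric to $\zz$. So the real comparison is between your sketch and the argument of \cite{MM1}. Your overall strategy (systole map applied to Teichm\"uller geodesics, a pseudo-Anosov for infinite diameter) is indeed the cited one, and the coarse-Lipschitz part is fine in outline. But the core step has a genuine gap. The criterion you attribute to \cite[\S 2]{MM1} --- a path family with uniformly thin triangles and bounded diameter for adjacent endpoints --- is the later ``guessing geodesics'' lemma of Bowditch and Masur--Schleimer; the criterion of \cite{MM1} instead requires a \emph{contraction property} for a coarse projection onto each path (defined via balance times of the quadratic differential), and the bulk of that paper is spent verifying contraction, not thinness. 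This matters because your substitute derivation of thin triangles from a single-geodesic ``no-backtracking'' statement does not go through: Teichm\"uller triangles are not thin, and a curve that is $\epsilon_0$-short over a point $p$ of one side need not be $\epsilon_0$-short anywhere along the other two sides, so the ``necessarily overlapping active intervals'' you invoke need not exist. That is precisely the difficulty the contraction property is designed to circumvent. Relatedly, the asserted estimate $\ell_t(v)\asymp\max\{e^{t}i(v,\mathcal{F}^-),\,e^{-t}i(v,\mathcal{F}^+)\}$ is false in general --- a curve can become short along a Teichm\"uller geodesic for twisting reasons while both intersection numbers are large (Rafi's characterization of short curves); only the one-sided bound and the coarse-interval statement survive, and the latter alone does not yield thin triangles. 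The infinite-diameter step also needs more than ``unparametrized quasigeodesic'' (constant paths are such): one needs the $\mathcal{PML}$ argument that curves converging to a filling lamination leave every bounded set of \cc, which is how \lemref{lemPA} is actually proved.

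Separately, the theorem as stated here quantifies over every \s other than a pair of pants, hence includes annuli, where $\mathcal{C}(A)$ is defined via arcs in the compactified annular cover. The Teichm\"uller-geodesic argument says nothing about that complex; as the paper notes, this case is the elementary observation that $\mathcal{C}(A)$ is quasi-isometric to $\zz$ (and the sporadic cases follow from the tree-like structure of the Farey graph). Your write-up should either restrict to $\xi(\s)\ge 1$ with the standard complex or treat these cases explicitly. Your closing pointers to Bowditch and to Hensel--Przytycki--Webb are accurate, and the guessing-geodesics route can be made to work, but it requires a genuinely different verification of thinness (unicorn arcs, or train-track splitting sequences), not the argument sketched here.
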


  For sporadic surfaces, \thmref{thmHyperbolic} follows from a classical
  result that the Farey graph is quasi-isometric to an infinite-valence
  tree (see \cite{Man05}). In the case of an annulus $A$,
  \thmref{thmHyperbolic} follows from the fact that $\mathcal{C}(A)$ is
  quasi-isometric to $\zz$ (see \cite[\S 2.4]{MM00}).
  
  For generic surfaces, there are several ways to see that \cc has infinite
  diameter. Relevant to our paper is the following lemma.

  \begin{lemma}[{\cite[Proposition 4.6]{MM99}}] \label{lemPA}

    There exists $k = k(\s)$ such that for any pseudo-Anosov $f \in \mcg$,
    any vertex $v \in \cc$, and any $n \in \zz$, \[d_{\s} \big( v, f^n(v)
    \big) \ge k|n|. \]
  
  \end{lemma}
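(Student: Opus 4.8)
The plan is to reduce the pointwise inequality to a uniform lower bound on the stable translation length of $f$ on $\cc$, by a soft subadditivity argument, and then to establish that lower bound using the $\delta$-hyperbolicity of $\cc$ and the structure of pseudo-Anosov actions. For the reduction, fix a vertex $v$ and set $a_n = d_\s\big(v,f^n(v)\big)$. Since $\mcg$ acts on $\cc$ by isometries, $a_{m+n}\le a_m+a_n$ for all $m,n\ge 0$, and applying the isometry $f^n$ to the pair $(v,f^{-n}(v))$ shows $d_\s\big(v,f^{-n}(v)\big)=a_n$. By Fekete's subadditivity lemma the limit $\tau(f):=\lim_{n\to\infty}a_n/n$ exists and equals $\inf_{n\ge1}a_n/n$, so $d_\s\big(v,f^n(v)\big)\ge\tau(f)\,|n|$ for every $n\in\zz$. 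Since $\big|a_n(v)-a_n(w)\big|\le 2\,d_\s(v,w)$ for any two vertices $v,w$, dividing by $n$ and letting $n\to\infty$ shows $\tau(f)$ is independent of the basepoint. It therefore suffices to produce a constant $k=k(\s)>0$ with $\tau(f)\ge k$ for every pseudo-Anosov $f$; the lemma follows at once with this $k$.

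For positivity of $\tau(f)$ for a fixed pseudo-Anosov $f$, I would invoke the assertion that $f$ acts loxodromically on $\cc$, which is \cite[Proposition 4.6]{MM1}. The mechanism to keep in mind: $f$ acts on Teichm\"uller space along a geodesic axis $A_f$, translating by $\log\lambda(f)>0$ where $\lambda(f)$ is the dilatation; Masur--Minsky's systole projection $\Upsilon\from\mathrm{Teich}(\s)\to\cc$ is coarsely $\mcg$-equivariant, coarsely Lipschitz with constants depending only on $\s$, and sends Teichm\"uller geodesics to uniform (unparametrized) quasigeodesics in the $\delta$-hyperbolic space $\cc$ (\thmref{thmHyperbolic}). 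Taking $v$ to be a systole of a point of $A_f$, the orbit $\{f^n(v)\}$ tracks the $f$-invariant quasigeodesic $\Upsilon(A_f)$, with $f^n(v)$ near the $\Upsilon$-image of the point of $A_f$ at parameter distance $n\log\lambda(f)$. If $\Upsilon(A_f)$ had finite diameter, $f$ would have a bounded $\cc$-orbit; but Thurston's north--south dynamics on projective measured foliations, together with Klarreich's identification of the Gromov boundary of $\cc$ with minimal filling foliations, forces $\{f^n(v)\}$ to converge to the stable and unstable foliations of $f$, hence to be unbounded. This gives $\tau(f)>0$, and routine hyperbolic-geometry estimates then recover $a_n\ge k(f)|n|$ for a constant $k(f)$ depending a priori on $f$.

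The crux, and where I expect essentially all the difficulty to lie, is upgrading this to a bound $\tau(f)\ge k$ with $k=k(\s)$ independent of $f$. A naive compactness argument is unavailable, since there are infinitely many conjugacy classes of pseudo-Anosovs and their dilatations accumulate to $1$, so the positive numbers $\log\lambda(f)$ are not bounded below. The plan is instead to exploit the hierarchy machinery of \secref{secHierarchies}: a hierarchy joining $v$ to $f^n(v)$ has a main-geodesic component whose $\cc$-length is coarsely $d_\s\big(v,f^n(v)\big)$, and one shows that the pseudo-Anosov hypothesis forces this length to grow linearly in $|n|$ at a rate bounded below purely in terms of $\s$ --- equivalently, that the translation lengths of pseudo-Anosovs on $\cc$ admit a positive lower bound depending only on $\s$ (a fact that can also be read off from the rationality of these translation lengths with denominators controlled by $\s$). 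Setting $k=k(\s)$ to this uniform rate and appealing to the first step completes the proof; the first two steps are routine once the background from this section is in place, and only this uniformity requires genuine input.
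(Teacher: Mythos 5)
First, a structural point: the paper does not prove this lemma at all --- it is imported verbatim as \cite[Proposition 4.6]{MM1} --- so there is no internal proof to compare yours against, and your attempt must stand on its own. Judged that way, it has a genuine gap, located exactly where you say it is. The Fekete reduction is correct: subadditivity of $a_n=d_\s(v,f^n(v))$, symmetry under $n\mapsto -n$, and basepoint-independence of $\tau(f)=\inf_n a_n/n$ do reduce the lemma to the uniform bound $\tau(f)\ge k(\s)>0$ over all pseudo-Anosovs. But that uniform bound \emph{is} the lemma; the per-element statement $\tau(f)>0$ is far weaker, and the paper genuinely needs the uniformity (for instance to choose the power $N_0$ in \secref{secReducible} depending only on \s). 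Your second paragraph delivers only $\tau(f)>0$ for a fixed $f$, and even there it opens by invoking ``\cite[Proposition 4.6]{MM1}'' --- the statement under proof --- before substituting a sketch via Klarreich's boundary and north--south dynamics. (That sketch is salvageable: unboundedness of the orbit alone would not exclude parabolic behavior, but convergence to two \emph{distinct} boundary points as $n\to\pm\infty$, which you do assert, does give loxodromicity.)

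Neither of your two suggestions for the crux closes the gap. The hierarchy suggestion is circular in spirit: asserting that the pseudo-Anosov hypothesis ``forces this length to grow linearly at a rate bounded below purely in terms of \s'' restates the conclusion, and the hierarchy machinery of \cite{MM2} gives $|H|\asymp d_M$, not a lower bound on $d_\s(v,f^n(v))$ linear in $n$ with a surface-only constant. The rationality of asymptotic translation lengths with uniformly bounded denominator is Bowditch's theorem, a substantially later and deeper result resting on acylindricity and finiteness of tight geodesics; it cannot honestly be ``read off'' here, and deducing Masur--Minsky's proposition from it inverts the logical order of the literature. The uniformity in Masur--Minsky's own argument comes from a different source: they work with the invariant Teichm\"uller geodesic of $f$ and their projection of $\cc$ onto it, whose coarse constants depend only on the topology of \s and not on $f$ or its dilatation $\lambda(f)$ (which can be arbitrarily close to $1$, so no argument factoring through $\log\lambda(f)$ alone can give a uniform $k$). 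Since your write-up defers exactly this input, the proposal as written does not prove the statement.
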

  
  The proof of $\delta$-hyperbolicity of \cc for a generic \s is
  nontrivial. We also refer to \cite{Bow06} for an alternate proof. 

  \subsection{Subsurface projections}\label{secProjection} 
  
  In this section, we restrict our discussion to domains of an ambient
  surface \s with $\xi(\s) \ge 1$. To do away with isotopy classes of
  curves and surfaces, we will equip \s with hyperbolic metric so that we
  may consider geodesic representatives for curves and (non-annular)
  subsurfaces of \s bounded by them.  
   
  Let $Y \subset \s$ be a proper domain. There is a map \[\pi_Y : \cc \to
  \mathcal{P}\big(\mathcal{C}(Y)\big),\] taking an element of \cc to a
  subset of $\mathcal{C}(Y)$ of bounded diameter. We call $\pi_Y(\alpha)$
  the \emph{projection} of $\alpha$ to $Y$. Note that in the definition
  below, the projection map also makes sense if we replace \s by any
  subsurface of \s that contains $Y$ as a proper subsurface. 

  We first define the projection to a non-annular domain $Y$. If $\alpha$
  and $Y$ are disjoint, then $\pi_Y(\alpha) = \emptyset$. If $\alpha$ is a
  curve in $Y$, then $\pi_Y(\alpha) = \{\alpha\}$. Otherwise, $\alpha$
  crosses $Y$ and $\alpha \cap Y$ consist of a collection of arcs in $Y$.
  The endpoints of each arc $\delta \subset \alpha \cap Y$ lie on one or
  two components of $\partial Y$. Let $N$ be a regular neighborhood of the
  union of $\delta$ with its corresponding component(s) in $\partial Y$.
  $N$ has either one or two components which are essential in $Y$. Let
  $\pi_Y(\delta)$ be the set of boundary component(s) of $N$. We define \[
    \pi_Y(\alpha) = \displaystyle \bigcup_{\delta \subset \alpha \cap Y}
  \pi_Y(\delta). \] Now suppose $Y=A$ is an annulus with core curve
  $\gamma$. There is a unique annular cover of $\s$ \[ p : \widehat{A} \to
  \s \] to which $A$ lifts homeomorphically. Since \s admits a hyperbolic
  metric, this cover has a natural compactification, also denote by
  $\widehat{A}$. We define $\mathcal{C}(A) = \mathcal{C}(\widehat{A})$. For
  any curve $\alpha$ in $\s$, components of $p^{-1}(\alpha)$ that are
  essential arcs form a subset in $\mathcal{C}(A)$. We will let
  $\pi_A(\alpha)$ be this corresponding set in $\mathcal{C}(A)$.  

  Denote by $\diam_Y(\cdot)$ the diameter of subsets in $\mathcal{C}(Y)$.
  For any two subsets $A, B \subset \mathcal{C}(Y)$, let \[ d_Y(A,B) =
  \diam_Y(A \cup B). \] Given a pair of curves $\alpha, \beta \in \cc$ and
  a domain $Y \subset \s$, we define \[ d_Y(\alpha, \beta) = d_Y \big(
  \pi_Y(\alpha), \pi_Y(\beta) \big). \] For any multicurve $\sigma$, one
  can also project $\sigma$ to $\mathcal{C}(Y)$ in the obvious way:
  $\pi_Y(\sigma) = \bigcup_{\alpha \in \sigma} \pi_Y(\alpha)$. Given two
  multicurves $\sigma$ and $\tau$, the distance $d_Y(\sigma, \tau)$ is
  similarly defined. 
  
  The follow result asserts that subsurface projections are coarsely
  well-defined and Lipschitz.

  \begin{lemma}[{\cite[Lemma 2.3]{MM00}}]\label{lem2lip}
    
    For any multicurve $\sigma$ on \s and any domain $Y \subset \s$, if
    $\pi_Y(\sigma) \ne \emptyset$, then $\diam_Y \big( \pi_Y(\sigma) \big)
    \le 2$. 

  \end{lemma}

  Suppose $Y$ and $Z$ are domains of $\s$ such that $Y$ is contained in
  $Z$. Then the maps $\pi_Y$ and  $\pi_Y \circ \pi_Z$ are ``coarsely
  equal'' as maps from $\cc \to \mathcal{P}(\mathcal{C}(Y))$.

  \begin{lemma}[{\cite[Lemma 2.12]{BKMM06}}] \label{lem3lip}

   There exists a constant $M$ depending only on $\s$ such that for any
   multicurve $\sigma$, \[ \diam_Y \big(\pi_Y(\sigma), \pi_Y \circ
   \pi_Z(\sigma) \big) \le M. \]

  \end{lemma}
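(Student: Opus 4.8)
The plan is to take the statement apart componentwise and then, for each component, descend to a single arc of its intersection with $Z$, where the comparison is built into the definition of $\pi_Z$. Write $\pi_Y(\sigma)=\bigcup_\alpha \pi_Y(\alpha)$ and $\pi_Y(\pi_Z(\sigma))=\bigcup_\alpha \pi_Y(\pi_Z(\alpha))$ over the components $\alpha$ of $\sigma$. We may assume $\pi_Y(\sigma)\neq\emptyset$ (otherwise both sides are empty, by the matching of non-emptiness below), and that $Y\subsetneq Z\subsetneq\s$, since $\pi_Z$ restricted to $\mathcal{C}(Z)$ and $\pi_\s$ on curves are (coarsely) identities. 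In particular $Z$ is not an annulus, as an annulus has no proper essential subsurface.

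First I claim that a component $\alpha$ meets $Y$ essentially if and only if $\pi_Z(\alpha)$ does. Since $Y\subseteq Z$, only $\alpha\cap Z$ — a curve, or a union of essential arcs — is relevant, and it suffices to treat a single arc $\delta$ of $\alpha\cap Z$. By construction $\pi_Z(\delta)$ consists of the essential boundary components of a regular neighborhood $P$ of $\delta$ together with the component(s) of $\partial Z$ carrying the endpoints of $\delta$; this set is nonempty because $Z$ is not a pair of pants, it is disjoint from $\delta$ inside $Z$, and cutting $Z$ along it places $\delta$ inside the pair of pants $P$. As $Y$ is a domain (hence not a pair of pants) and can be isotoped off $\partial Z$, it cannot be carried by $P$; consequently $Y$ meets $\pi_Z(\delta)$ precisely when $Y$ fails to be contained in a single complementary component of $\pi_Z(\delta)$, which is precisely when $Y$ meets $\delta$. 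Taking unions over the arcs of $\alpha\cap Z$ and then over the components of $\sigma$ yields the claim, and in particular that $\pi_Y(\sigma)$ and $\pi_Y(\pi_Z(\sigma))$ are built from the same components and are simultaneously empty.

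It remains to bound $d_Y\big(\pi_Y(\alpha),\pi_Y(\pi_Z(\alpha))\big)$ uniformly for each component $\alpha$ meeting $Y$. Fix an arc $\delta$ of $\alpha\cap Z$ meeting $Y$. Since $\delta$ and $\pi_Z(\delta)$ are disjoint inside $Z$, the coarse-Lipschitz/bounded-diameter estimate (\lemref{lem2lip}, applied with $\s$ replaced by $Z$, in the standard form that also allows arcs) gives $d_Y\big(\pi_Y(\delta),\pi_Y(\pi_Z(\delta))\big)\le 2$. Now $\pi_Y(\delta)\subseteq\pi_Y(\alpha)$ and $\pi_Y(\pi_Z(\delta))\subseteq\pi_Y(\pi_Z(\alpha))$, while each of $\pi_Y(\alpha)$ and $\pi_Y(\pi_Z(\alpha))$ has diameter bounded by an absolute constant in $\mathcal{C}(Y)$ — directly from \lemref{lem2lip} for the former, and for the latter by combining $\diam_Z\big(\pi_Z(\alpha)\big)\le 2$ with the Lipschitz bound of \lemref{lem2lip}. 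Chaining these inequalities across the components of $\sigma$ through one fixed component meeting $Y$ bounds $\diam_Y\big(\pi_Y(\sigma)\cup\pi_Y(\pi_Z(\sigma))\big)$ by a constant $M$ assembled from the absolute constants above; in fact $M$ may be chosen independent of $\s$.

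The main obstacle is the topological bookkeeping in the two reductions. One has to track how the arcs of $\sigma\cap Z$ interact with regular neighborhoods of $\partial Z$, and in particular must resist treating $\pi_Z(\alpha)=\bigcup_\delta\pi_Z(\delta)$ — still less $\pi_Z(\sigma)$ — as an honest multicurve, since the separate neighborhoods of distinct arcs can force crossings (and a single all-at-once neighborhood can have empty essential boundary). This is exactly why the argument is driven down to single arcs and only then re-assembled through \lemref{lem2lip}. The one further point needing separate (but routine) care is the case where $Y$ is an annulus, where $\pi_Y$ is defined via the cyclic cover: the sole input used above, that disjoint curve or arc systems have coarsely the same image in $\mathcal{C}(Y)$, is precisely the annular case of \lemref{lem2lip}, so the same chain of estimates applies verbatim.
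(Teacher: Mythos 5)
The paper offers no proof of this statement: it is imported directly from \cite{BKMM}, so there is no internal argument to compare against. Your proof is essentially the standard one for this fact (reduce to a single arc $\delta$ of $\sigma\cap Z$, realize $\pi_Z(\delta)$ disjoint from $\delta$, and convert disjointness inside $Z$ into bounded distance in $\mathcal{C}(Y)$ via the bounded-diameter half of \lemref{lem2lip}), and its overall structure is sound. In particular you are right to refuse to treat $\pi_Z(\sigma)$ as a multicurve, and the verification that an arc $\delta$ cuts $Y$ exactly when $\pi_Z(\delta)$ does is the correct pivot of the argument.

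One justification, however, is wrong as stated: you bound $\diam_Y\big(\pi_Y(\pi_Z(\alpha))\big)$ by ``combining $\diam_Z(\pi_Z(\alpha))\le 2$ with the Lipschitz bound of \lemref{lem2lip}.'' The inequality $d_Y(u,v)\le 2\,d_Z(u,v)$ comes from chaining the diameter bound for multicurves along a geodesic in $\mathcal{C}(Z)$, and it fails whenever an intermediate vertex misses $Y$: two curves at distance $2$ in $\mathcal{C}(Z)$ (for instance both contained in $Y$ and hence disjoint from $\partial Y$) can have arbitrarily large $d_Y$ --- this failure of Lipschitz control is exactly why subsurface projections carry information at all. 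So small $\mathcal{C}(Z)$-diameter of $\pi_Z(\alpha)$ does not by itself bound the $\mathcal{C}(Y)$-diameter of its image under $\pi_Y$. The step is repairable with tools already in your proof: by your biconditional claim, every component of $\pi_Z(\alpha)$ that cuts $Y$ lies in $\pi_Z(\delta)$ for some arc $\delta$ of $\alpha\cap Z$ that itself cuts $Y$; each such component is within uniformly bounded $\mathcal{C}(Y)$-distance of $\pi_Y(\delta)$ by disjointness; and the sets $\pi_Y(\delta)$ over all such arcs lie inside $\pi_Y(\alpha)$, whose diameter is at most $2$. Chaining through the arcs, rather than through $\mathcal{C}(Z)$-geodesics, yields the bound, and with that substitution the proof is correct. (A minor point of convention: $Y\subsetneq Z$ should exclude the annulus about a component of $\partial Z$, where $\pi_Y(\sigma)$ can be nonempty while $\pi_Y(\pi_Z(\sigma))$ is empty; the stated bound still holds there trivially since $\diam_Y(A,\emptyset)=\diam_Y(A)\le 2$.)
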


  We also have the following contraction property for the projection map from
  \cite[Theorem 3.1]{MM00}.

  \begin{theorem}[Bounded geodesic image]\label{thmBGI}

    There exists a constant $M_0$ depending only on $\s$ such that the
    following holds. Suppose $Y \subset \s$ is a proper essential
    subsurface, and $g$ is a geodesic in $\cc$ such that $\pi_Y(v) \ne
    \emptyset$ for every vertex $v \in g$. Then \[\diam_Y(g) \le M_0.\]

  \end{theorem}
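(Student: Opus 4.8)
The strategy is to combine the coarse Lipschitz behaviour of subsurface projection (\lemref{lem2lip}) with the $\delta$--hyperbolicity of \cc (\thmref{thmHyperbolic}): the first ingredient by itself only yields a bound that is linear in the length of the geodesic $g$, and the whole point of the statement is that hyperbolicity upgrades this to a bound depending only on \s.

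First I would make two reductions and record two elementary facts. Since subsurface projection is \mcg--equivariant, $\pi_{hY}(h\alpha) = h\,\pi_Y(\alpha)$, and there are only finitely many \mcg--orbits of proper essential subsurfaces $Y \subset \s$, it is enough to prove the estimate for one $Y$ in each orbit; the non-annular and annular cases can then be run in parallel. Next: if a vertex $v$ satisfies $\pi_Y(v) = \emptyset$ then $v$ is disjoint from $Y$, hence from every component of $\partial Y$, so $v$ lies in the closed $1$--neighbourhood of the simplex $\partial Y$ in \cc; thus the hypothesis says precisely that $g$ avoids a set clustered around $\partial Y$. And whenever $v$ meets $Y$, a single arc of $v \cap Y$ together with a collar of the relevant components of $\partial Y$ exhibits an element of $\pi_Y(v)$ at uniformly bounded \cc--distance from $v$, so $\pi_Y$ is coarsely the identity wherever it is defined.

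The argument then splits into an easy part and a hard part. The easy part: consecutive vertices $v_i, v_{i+1}$ of $g$ are disjoint, so $v_i \cup v_{i+1}$ is a multicurve meeting $Y$, and \lemref{lem2lip} gives $d_Y(v_i, v_{i+1}) \le 2$; summing along $g$ yields $\diam_Y(g) \le 2\, d_\s(v_0, v_m)$. Note also that, since $\diam_Y(g) = \max_{i<j} d_Y(v_i,v_j)$ and each subsegment $[v_i,v_j]$ is again a geodesic all of whose vertices meet $Y$, it suffices to bound $d_Y$ of the two endpoints of an arbitrary such segment. The hard part is to replace the linear bound $\diam_Y(g) \le 2\, d_\s(v_0, v_m)$ by one independent of $g$. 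Here I would argue by contradiction: supposing $d_Y(v_0, v_m)$ to be enormous, I would represent $\pi_Y(v_0)$ and $\pi_Y(v_m)$ inside $\mathcal{C}(Y)$ and join them by a $\mathcal{C}(Y)$--geodesic, which --- in the non-annular case, and by an analogous device exploiting the $\zz$--structure of $\mathcal{C}(Y)$ in the annular case --- is also a path in \cc of length $d_Y(v_0,v_m) + O(1)$, since disjointness inside $Y$ is disjointness in \s. Comparing this path with the genuine geodesic $g$ and using $\delta$--thinness of triangles ought to force some vertex of $g$ to be disjoint from $Y$, contradicting the hypothesis. Making this comparison precise --- controlling how a geodesic of \cc can interact with the fixed subsurface $Y$, i.e.\ coordinating the coarse geometry of \cc with that of $\mathcal{C}(Y)$ --- is the main obstacle, and is exactly the content of the combinatorial argument of \cite[Theorem 3.1]{MM2}. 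In the sporadic and annular base cases it is easier, because there the relevant curve complex is quasi-isometric to a tree, where nearest-point projection to a vertex is automatically bounded along any geodesic that misses that vertex.
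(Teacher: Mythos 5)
This theorem is not proved in the paper at all: it is imported verbatim from \cite[Theorem 3.1]{MM2}, so the only fair comparison is between your sketch and a complete argument. Your reductions and your ``easy part'' are correct --- consecutive vertices of a tight geodesic are disjoint, so \lemref{lem2lip} gives $d_Y(v_i,v_{i+1})\le 2$ and hence the linear bound $\diam_Y(g)\le 2\,d_\s(v_0,v_m)$ --- but the ``hard part'' is where the theorem actually lives, and there your proposal has a genuine gap, in fact two.

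First, the mechanism you propose for upgrading the linear bound cannot work as described. A geodesic of $\mathcal{C}(Y)$ from $\pi_Y(v_0)$ to $\pi_Y(v_m)$ does give a path in \cc in the non-annular case, but every vertex of that path is a curve contained in $Y$, hence disjoint from $\partial Y$, so the entire path has diameter at most $2$ in \cc no matter how long it is in $\mathcal{C}(Y)$. The inclusion $\mathcal{C}_0(Y)\hookrightarrow\mathcal{C}_0(\s)$ is infinitely distorted; your path is nowhere near a geodesic or uniform quasi-geodesic of \cc, there is no geodesic triangle to compare with $g$, and $\delta$--thinness of \cc carries no information about $d_Y(v_0,v_m)$. (For annular $Y$ it is worse: vertices of $\mathcal{C}(Y)$ are arcs in the annular cover and are not vertices of \cc at all.) What is needed is a statement linking a large value of $d_Y$ to the behaviour of $g$ near $\partial Y$ inside \cc --- and that is precisely the assertion being proved. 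Second, and relatedly, you close by saying that making the comparison precise ``is exactly the content of \cite[Theorem 3.1]{MM2}'': that is the statement under proof, so the proposal is circular at its only nontrivial point. A correct argument must supply a genuinely new input beyond \lemref{lem2lip} and thin triangles --- Masur and Minsky obtain it from a careful analysis of how curves with widely separated $Y$--projections must intersect $Y$ and each other (and an elementary surgery proof with an explicit constant was later given by Webb) --- and in the context of this paper the honest move is simply to cite the result, as the author does.
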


  We say $g$ \emph{cuts} $Y$ if $\pi_Y(v) \ne \emptyset$ for every vertex
  $v \in g$, and $g$ \emph{misses} $Y$ otherwise. If $d_\s(g, \partial Y)
  \ge 2$ then $g$ cuts $Y$. On the other hand, by \thmref{thmBGI}, if $u, v
  \in \cc$ has $d_Y(u,v) > M_0$, then any geodesic $g$ in $\cc$ between $u$
  and $v$ misses $Y$.

  \subsection{Marking graph}
  
  \label{secMarking} 
  
  Another useful combinatorial object that admits an action by $\mcg$ is
  the \emph{marking graph} \mg of \s. Roughly, a \emph{marking} $\mu$ on \s
  is a multicurve $c$ on \s with additionally a set of transverse curves
  which serve to record twisting data about each curve in $c$. Below, we
  give a precise definition that works for any surface \s with $\xi(\s) \ge
  1$.
  
  A marking $\mu$ on \s is a set of ordered pairs $\{(\alpha_i,t_i)\}$,
  where the \emph{base curves} $base(\mu) = \{\alpha_i\}$ is a multicurve
  on \s, and each \emph{transversal} $t_i$ is either empty or is a
  diameter-1 set of vertices in $\mathcal{C}(\alpha_i)$. The set of
  transversals $\{t_i\}$ is denoted by $\tran(\mu)$. A transversal $t$ in
  the pair $(\alpha,t)$ is called \emph{clean} if $t =
  \pi_{\alpha}(\beta)$, where $\beta$ is a curve on $\s$ such that $\alpha$
  and $\beta$ are Farey-neighbors in the subsurface that they fill. A
  marking $\mu$ is \emph{clean} if every non-empty transversal $t$ is
  clean, and the curve $\beta$ inducing $t$ does not intersect any other
  base curve other than $\alpha$. A marking $\mu$ is called \emph{complete}
  if $\base(\mu)$ is a pants decomposition of $\s$ and no transversal is
  empty. If $\mu$ is complete and clean, then a transversal $t$ determines
  uniquely the curve $\beta$ such that $t = \pi_\alpha(\beta)$. If $\mu$ is
  not clean then there is bounded number ways of picking a
  \emph{compatible} clean marking $\mu'$, in the following sense:

  \begin{lemma}[{\cite[Lemma 2.4]{MM00}}]\label{lemCleanMarking}

    There exists a constant $M$ depending only on $\s$ satisfying the
    following. For any complete marking $\mu$ on $\s$, there exists a
    uniformly bounded number (depending only on \s) of complete clean
    markings $\mu'$ such that $\base(\mu) = \base(\mu')$, and
    $d_{\alpha}(t, t') \le M$ for any $(\alpha, t) \in \mu$ and $(\alpha,
    t') \in \mu'$.

  \end{lemma}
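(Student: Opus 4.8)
The plan is to ``clean up'' the transversals of $\mu$ one base curve at a time, using the Dehn twist action on annular curve complexes to control both the distances and the count. Write $\mu = \{(\alpha_i, t_i)\}$ with $c = \base(\mu)$ a pants decomposition. Fix $i$ and let $F_i$ be the component of the surface obtained by cutting $\s$ along $c \setminus \{\alpha_i\}$ that contains $\alpha_i$; it is a one-holed torus or a four-holed sphere (or all of $\s$, when $\xi(\s)=1$), it contains $\alpha_i$ in its interior, and $\partial F_i \subseteq c$. By definition, the clean transversals to $\alpha_i$ compatible with $\base(\mu)$ are precisely the sets $\pi_{\alpha_i}(\beta)$, where $\beta$ ranges over the vertices of the Farey graph $\mathcal{C}(F_i)$ adjacent to $\alpha_i$: every such $\beta$ lies in the interior of $F_i$, hence is disjoint from every base curve other than $\alpha_i$, since the only base curves meeting $F_i$ are the components of $\partial F_i$. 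So it suffices to produce, for each $i$, a Farey-neighbor $\beta_i$ of $\alpha_i$ in $F_i$ with $d_{\alpha_i}\big(t_i, \pi_{\alpha_i}(\beta_i)\big)$ bounded by a uniform constant, and then to bound the number of Farey-neighbors satisfying such an estimate.

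For the closeness estimate I would use the twist action. Recall that $\mathcal{C}(\alpha_i)$ is quasi-isometric to $\zz$ and that the Dehn twist $T_{\alpha_i}$ acts on it as a translation of bounded displacement; more precisely, by the standard comparison of annular projection distance with twisting number (see \cite{MM1,MM2}) there is a constant $c_0 = c_0(\s)$ such that
\[
\big| \, d_{\alpha_i}\big(\pi_{\alpha_i}(\beta),\, \pi_{\alpha_i}(T_{\alpha_i}^{\,n}\beta)\big) - |n| \, \big| \le c_0
\]
for every curve $\beta$ crossing $\alpha_i$ and every $n \in \zz$. Pick any Farey-neighbor $\beta_0$ of $\alpha_i$ in $F_i$ (one exists since $\mathcal{C}(F_i)$ is connected). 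Then every $T_{\alpha_i}^{\,n}\beta_0$ is again a Farey-neighbor of $\alpha_i$, and the displayed estimate shows that the sets $\big\{\pi_{\alpha_i}(T_{\alpha_i}^{\,n}\beta_0)\big\}_{n\in\zz}$ are coarsely dense in $\mathcal{C}(\alpha_i)$ with a constant depending only on $c_0$ (hence only on $\s$). Choosing $n$ so that $\pi_{\alpha_i}(T_{\alpha_i}^{\,n}\beta_0)$ comes within this constant of a vertex of $t_i$ and setting $\beta_i = T_{\alpha_i}^{\,n}\beta_0$, we obtain $d_{\alpha_i}\big(t_i, \pi_{\alpha_i}(\beta_i)\big) \le M$ for a uniform $M$ (using \lemref{lem2lip} to bound the diameters of $t_i$ and of $\pi_{\alpha_i}(\beta_i)$). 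Carrying this out for every $i$ produces a complete clean marking $\mu'$ with $\base(\mu') = \base(\mu)$.

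For finiteness, the Farey-neighbors of $\alpha_i$ in $F_i$ form only finitely many orbits under $\langle T_{\alpha_i}\rangle$ — a single orbit when $F_i$ is a one-holed torus, since the parabolic fixing a vertex of the Farey graph acts transitively on its link, and a bounded number when $F_i$ is a four-holed sphere — and, again by the displayed estimate, along each orbit the projections $\pi_{\alpha_i}(T_{\alpha_i}^{\,n}\beta_0)$ form a $(1,c_0)$-quasigeodesic in $\mathcal{C}(\alpha_i)$. Hence at most $O(M + c_0)$ of them can lie within distance $M$ of $t_i$, so there are at most a bounded number of admissible $\beta_i$, and therefore at most a bounded number of admissible transversals $t_i' = \pi_{\alpha_i}(\beta_i)$. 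Taking the product over the $\xi(\s)$ base curves of $c$, the number of complete clean markings $\mu'$ with $\base(\mu') = \base(\mu)$ and $d_{\alpha_i}(t_i, t_i') \le M$ for all $i$ is bounded by a constant depending only on $\s$.

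The one genuinely non-formal ingredient is the twisting-number estimate displayed above, with $c_0$ uniform over the finitely many topological types of complexity-$1$ surface; this is the step I expect to be the main obstacle. It can be extracted from the general comparison between subsurface projections to annuli and algebraic twisting numbers, or — since there are only the two cases of the one-holed torus and the four-holed sphere — simply verified directly. Everything else is bookkeeping with the definitions of clean and complete markings.
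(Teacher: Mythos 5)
Your argument is correct, and it is essentially the standard proof of this statement: the paper itself offers no proof, deferring entirely to \cite[Lemma 2.4]{MM2}, whose argument likewise identifies the compatible clean transversals to $\alpha_i$ with projections of Farey-neighbors in the complexity-one piece $F_i$ and controls existence and finiteness via the comparison $d_{\alpha_i}\big(\pi_{\alpha_i}(\beta), \pi_{\alpha_i}(T_{\alpha_i}^{\,n}\beta)\big) \asymp |n|$. The one ingredient you flag as non-formal (the twisting estimate with uniform $c_0$) is exactly the content of the annular-projection discussion in \cite{MM2}, so nothing is missing.
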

  
  We will often suppress the pair notation and regard a marking $\mu$ as
  the union of its base curves and transversals, i.e.~$\displaystyle \mu =
  \big( \cup_{\alpha \in \base(\mu)} \alpha\big) \bigcup \big( \cup_{t \in
  \tran(\mu)} t \big)$.

  \begin{definition}[Marking graph]
    
    The \emph{marking graph} $\mg$ is the graph with vertices representing
    complete clean markings on $\s$. Two vertices $\mu = \{ (\alpha_i,
    \pi_{\alpha_i}(\beta_i)) \}$ and $\mu' = \{ (\alpha'_i,
    \pi_{\alpha'}(\beta_i')) \}$ are connected by an edge if they differ by
    one of the following \emph{elementary moves}: 
    \begin{itemize}

      \item \emph{Twist}: For some $i$, $\beta_i'$ is obtained from $\beta$ by a
        twist or half-twist along $\alpha_i$. All base curves and other
        transversals of $\mu$ and $\mu'$ agree.
        
      \item \emph{Flip}: Let $\mu''$ be the (unclean) marking obtained from
        $\mu$ by ``flipping'' $ \big( \alpha_i, \pi_{\alpha_i}(\beta_i)
        \big)$ to $\big( \beta_i, \pi_{\beta_i}(\alpha_i) \big)$, for some
        $i$. The marking
        $\mu'$ is any clean marking compatible with $\mu''$ replacing
        all transversals $\beta_j$ that intersect $\beta_i$.   
    
    \end{itemize}

  \end{definition}

  We equip \mg with the combinatorial edge metric, denoted by $d_{\mg}$.
  Like \cc, \mg is connected and admits an action of \mcg by isometries.
  But unlike \cc, \mg is locally finite and the action of \mcg is proper.
  The quotient $\mg/\mcg$ is also finite, since there are only finitely
  many complete clean markings up to homeomorphisms of $\s$ \cite{MM00}. By
  a standard application of \v{S}varc-Milnor, the orbit map $\mcg \to \mg$
  is a quasi-isometry.     

  \begin{definition}[Projection of markings]

  Let $Y \subset \s$ be essential and let $\mu \in \mg$. We can project
  $\mu$ to $Y$, also denoted by $\pi_Y(\mu)$, in the following way. Namely,
  if $Y$ is not a curve in $\base(\mu)$, then $\pi_Y(\mu) = \pi_Y \big(
  \base(\mu) \big)$. If $Y=\alpha$ is a curve contained in $\base(\mu)$,
  then $\pi_Y(\mu) = t$, where $t$ is the transversal curve to $\alpha$ in
  $\mu$. Note that, since $\mu$ is a complete marking, the projection map
  is always non-empty. 
  
  Since $\base(\mu)$ is a diameter-1 set in $\cc$, in light of
  \lemref{lem2lip} the projection map is Lipschitz: 
  
  \end{definition}

  \begin{lemma}[{\cite[Lemma 2.5]{MM00}}]\label{lem4lip}
    
    For any $\mu, \nu \in \mg$ and any domain $Y \subseteq \s$, \[ d_Y(\mu,
    \nu) \le 4 \, d_{\mg}(\mu, \nu). \]

  \end{lemma}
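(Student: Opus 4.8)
The plan is to reduce the estimate to the case in which $\mu$ and $\nu$ are joined by a single edge of \mg, and then to inspect the two elementary moves. The reduction rests on two facts. First, $\pi_Y(\mu)$ is nonempty for every complete marking $\mu$, as noted right after the definition of the projection of markings. Second, the quantity $d_Y(A,B)=\diam_Y(A\cup B)$ satisfies the quasi-triangle inequality $d_Y(A,C)\le d_Y(A,B)+d_Y(B,C)$ whenever $B\ne\emptyset$, since any $a\in A$ and $c\in C$ may be joined in $\mathcal{C}(Y)$ through any chosen $b\in B$. Hence, if $\mu=\mu_0,\mu_1,\dots,\mu_n=\nu$ is an edge-path in \mg with $n=d_M(\mu,\nu)$, then $d_Y(\mu,\nu)\le\sum_{k=1}^{n}d_Y(\mu_{k-1},\mu_k)$, so it is enough to prove $d_Y(\mu,\nu)\le 4$ under the additional hypothesis that $\mu$ and $\nu$ differ by one elementary move.

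Two observations will be used throughout. (a) For any domain $Y$ and any complete marking $\mu$, $\diam_Y\pi_Y(\mu)\le 2$: if $Y$ is not the annulus about a base curve of $\mu$, then $\pi_Y(\mu)=\pi_Y(\base(\mu))$ is the projection of a multicurve and \lemref{lem2lip} applies, while if $Y=\alpha\in\base(\mu)$ then $\pi_Y(\mu)$ is the transversal at $\alpha$, a diameter-$1$ set. (b) The projection $\pi_Y(\mu)$ depends only on $\base(\mu)$, except when $Y$ is the annulus about some $\alpha\in\base(\mu)$, where it equals the transversal at $\alpha$. Now suppose $\mu$ and $\nu$ differ by a \emph{twist} move along a base curve $\alpha$, so $\base(\mu)=\base(\nu)$ and all data agree except the transversal at $\alpha$. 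If $Y$ is not the annulus with core $\alpha$, then $\pi_Y(\mu)=\pi_Y(\nu)$ by (b), and $d_Y(\mu,\nu)=\diam_Y\pi_Y(\mu)\le 2$ by (a). If $Y$ is that annulus, then $\pi_Y(\mu)$ and $\pi_Y(\nu)$ are diameter-$1$ subsets of $\mathcal{C}(Y)$ whose underlying arcs differ by a single Dehn- or half-twist about $\alpha$, which displaces an arc in $\mathcal{C}(Y)$ by a uniformly bounded amount, and a direct check gives $d_Y(\mu,\nu)\le 4$.

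The remaining, and genuinely delicate, case is a \emph{flip} move: $\base(\mu)=c\cup\{\alpha\}$ and $\base(\nu)=c\cup\{\beta\}$ with $\alpha,\beta$ Farey-neighbors filling a complexity-$1$ subsurface $W$ having $\partial W\subseteq c$; the transversal at $\alpha$ in $\mu$ is $\pi_\alpha(\beta)$, that at $\beta$ in $\nu$ is $\pi_\beta(\alpha)$, and the cleaning built into the flip changes only transversal curves that meet $\beta$, and those only within the error of \lemref{lemCleanMarking}. One then argues by the position of $Y$, using (a), (b), and \lemref{lem2lip}. If some curve of $c$ cuts $Y$ and $Y$ is not the annulus about $\alpha$ or $\beta$, then $\pi_Y(\mu)=\pi_Y(c\cup\{\alpha\})$ and $\pi_Y(\nu)=\pi_Y(c\cup\{\beta\})$ are projections of multicurves both containing the nonempty set $\pi_Y(c)$, so $\diam_Y(\pi_Y(\mu)\cup\pi_Y(\nu))\le 2+2=4$. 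If no curve of $c$ cuts $Y$, then $Y\subseteq W$; since $Y$ is neither $\alpha$ nor $\beta$, a direct estimate from $i(\alpha,\beta)\le 2$ bounds $d_Y(\alpha,\beta)$ by $4$ (and it equals $1$ when $Y=W$, as $\alpha,\beta$ are Farey-neighbors in $W$). If $Y$ is the annulus about $\alpha$ (respectively $\beta$), then every curve of $c$ misses $Y$, so $\pi_Y(\mu)$ and $\pi_Y(\nu)$ both reduce to $\pi_\alpha(\beta)$ (respectively $\pi_\beta(\alpha)$), giving $d_Y(\mu,\nu)\le 2$. Finally, if $Y$ is the annulus about a curve of $c$ whose transversal was cleaned, the error bound of \lemref{lemCleanMarking} together with (a) controls $d_Y(\mu,\nu)$. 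Taking the maximum over these finitely many cases yields $d_Y(\mu,\nu)\le 4$, which completes the proof. I expect this flip analysis to be the main obstacle: the reduction and the twist move are routine, but keeping every annular subcase and the cleaning move inside the single constant $4$ is where all the care lies.
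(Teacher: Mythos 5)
This lemma is quoted from \cite[Lemma 2.5]{MM2}; the paper offers no proof of its own, so the only possible comparison is with Masur--Minsky's argument, and your proposal reconstructs essentially that argument: reduce to a single elementary move via the triangle inequality for $d_Y(A,B)=\diam_Y(A\cup B)$ (legitimate because projections of complete markings are never empty), then bound the effect of a twist and of a flip on every subsurface projection by a case analysis on the position of $Y$ relative to the base curves. The structure is sound and the case division is the right one. The only soft spot is the claim that every subcase lands under the specific constant $4$: you dispose of the annular twist subcase by ``a direct check'' and of the cleaned-transversal subcase of the flip by ``the error bound of \lemref{lemCleanMarking},'' but the latter constant is stated in this paper only as some $M$, and the former needs the explicit annular computation $d_\alpha\big(t,T_\alpha^{\pm 1}t\big)\le 3$ together with the diameter bounds. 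As written, your argument proves $d_Y(\mu,\nu)\le C\,d_M(\mu,\nu)$ for a uniform $C$, which is all this paper ever uses; to recover the literal constant $4$ one must carry out those two computations explicitly, as is done in \cite{MM2}.
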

  
  If $c$ is a multicurve and $\mu$ a marking with $c \subseteq \base(\mu)$,
  then we say $\mu$ is an \emph{extension} of $c$. We will often start with
  a multicurve $c$ and \emph{extend} it to a marking $\mu$. This amounts to
  choosing a marking on all the essential non-annular components of $\s
  \setminus c$ and choosing a transversal for each curve $\alpha \in c$.
  There are many ways to extend a marking in general, but most often we
  will need the marking $\mu$ to satisfy certain desired properties so
  those choices will be bounded. 

  \begin{definition}[Induced Marking]

  Let $Y \subset \s$ be an non-annular domain. We define a map \[ \Pi_Y :
  \mg \to \operatorname{Mark}(Y). \] For each marking $\mu$ on $\s$, choose
  a pants decomposition $b$ of $Y$ such that $b$ has minimal intersection
  with $\pi_Y(\mu)$. We extend $b$ to a marking $\nu = \Pi_Y(\mu)$ on $Y$
  as follows. For each curve $\alpha \in b$, choose transversal $t_\alpha$
  in $Y$ such that $d_\alpha \big(t_\alpha, \mu \big)$ is minimal. The
  marking $\nu=\{(\alpha,t_\alpha): \alpha \in b\}$ will be called an
  \emph{induced marking} of $\mu$ on $Y$, and it is well-defined up to a
  bounded number of choices. It follows from \lemref{lem3lip} and
  \lemref{lem4lip} that for any marking $\mu$, any non-annular domain $Y
  \subset \s$, and any domain $Z \subset Y$,
  \begin{equation}\label{eqInduced} d_Z \big(\mu, \Pi_Y(\mu) \big) \le M,
  \end{equation} where $M$ depends only on \s.
  
  \end{definition}

  \begin{definition}[Relative marking extension]\label{defMarkingExtension}

    Let $\mu \in \mg$ and $c$ be a multicurve on \s. We extend $c$ to a
    marking $\mu' \in \mg$ \emph{relative} to $\mu$ as follows. For each
    non-annular domain $Y$ in $\s \setminus c$, choose an induced marking
    $\Pi_Y(\mu)$ on $Y$. Then for each curve $\alpha \in c$, choose a
    transversal $t_\alpha$ with minimal $d_\alpha(t_\alpha, \mu)$. The
    union of $\{(\alpha,t_\alpha): \alpha \in c\}$ with the set of induced
    markings $\Pi_Y(\mu)$ forms a marking $\mu' \in \mg$ which is
    well-defined up to a bounded number of choices.

  \end{definition}
  
  The following is an immediate consequence of our construction.   

  \begin{lemma} \label{lemMarkingExtension}

    Let $c$ be a multicurve on \s, $\mu$ any marking, and $\mu'$ an
    extension of $c$ relative to $\mu$. For any proper domain $Z \subset
    \s$, if $Z$ is contained in an essential component of $S \setminus c$,
    or if $Z$ is a curve in $c$, then \[ d_Z(\mu', \mu) \le M, \] where $M$
    depends only on \s.

  \end{lemma}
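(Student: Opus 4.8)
The plan is to unwind the construction of $\mu'$ in \defref{defMarkingExtension} and check the desired bound $d_Z(\mu',\mu) \le M$ separately according to where $Z$ sits relative to $c$. There are three cases to handle. First, suppose $Z$ is one of the annular domains $\alpha \in c$. Then $\pi_Z(\mu') = t_\alpha$, the transversal chosen so that $d_\alpha(t_\alpha, \tran(\mu))$ is minimal among all valid transversals. Since $\tran(\mu) = \{\pi_{\alpha_i}(\beta_i)\}$ and some transversal of $\mu$ already projects into $\mathcal{C}(\alpha)$ (or a curve of $\base(\mu)$ crosses $\alpha$), there is at least one admissible choice of $t_\alpha$ within bounded distance of $\pi_\alpha(\mu)$; minimality then forces $d_\alpha(t_\alpha, \pi_\alpha(\mu))$ to be bounded by a constant depending only on \s. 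So $d_Z(\mu',\mu) \le M_1$ for this case.

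Second, suppose $Z \varsubsetneqq \s$ is non-annular with $\partial Z$ disjoint from $c$. Then $Z$ is contained in (up to isotopy, equal to) one of the essential non-annular complementary components $Y \in \s \setminus c$, so $Z \subseteq Y$, and by construction $\mu'$ restricted to $Y$ is exactly the induced marking $\Pi_Y(\mu)$. Applying inequality~\eqref{eqInduced} with this $Y$ and this $Z$ gives $d_Z(\mu, \Pi_Y(\mu)) \le M$; since $\pi_Z(\mu') = \pi_Z(\Pi_Y(\mu))$ we conclude $d_Z(\mu',\mu) \le M$. The slightly delicate point here is to make sure that $Z$ genuinely lies inside a single complementary piece $Y$: this is where the hypothesis that $\partial Z$ is disjoint from $c$ is used — a connected essential subsurface whose boundary misses the multicurve $c$ cannot straddle two pieces of $\s \setminus c$, hence it is contained in one of the $Y$'s (or is an annulus about a curve of $c$, which is the first case).

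Third, the remaining case is an annular $Z$ whose core curve $\gamma$ is disjoint from $c$ but $\gamma \notin c$. Again $\gamma$ lies inside some non-annular complementary component $Y$ of $\s \setminus c$ (it cannot be a boundary curve of a piece since $\partial(\s \setminus c) = c$ and $\gamma \notin c$), so $\pi_Z(\mu') = \pi_Z(\Pi_Y(\mu))$ and we invoke \eqref{eqInduced} exactly as before with $Z \subset Y$. Finally, take $M$ to be the maximum of the constants arising in the three cases; this is the single constant claimed in the statement, and it depends only on \s since each of $M_1$ and the constant in \eqref{eqInduced} does. The main obstacle — really the only non-formal point — is the case analysis on the position of $Z$: one must argue carefully that the hypothesis ``$\partial Z$ disjoint from $c$, or $Z$ a curve in $c$'' is exactly what is needed to place $Z$ inside a single structural piece of the marking $\mu'$, so that the already-established local estimates ($d_\alpha$-minimality of transversals, and the induced-marking inequality \eqref{eqInduced}) apply verbatim. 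Everything else is bookkeeping with the definitions of $\pi_Z$ and $\mu'$.
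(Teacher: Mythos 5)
Your overall strategy is the right one --- the paper offers no argument beyond ``immediate consequence of the construction,'' and unwinding \defref{defMarkingExtension} case by case is exactly what is needed. The annulus cases (a curve of $c$, or an annulus inside a complementary piece) and the appeal to \eqref{eqInduced} for $Z$ inside a complementary component are fine.

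The gap is the topological claim you call the ``only non-formal point'': it is false that a connected essential subsurface $Z$ with $\partial Z$ disjoint from $c$ must lie in a single component of $\s \setminus c$ or be an annulus about a curve of $c$. A third possibility is that $Z$ properly \emph{contains} curves of $c$ in its interior (for instance $c = \{\alpha\}$ and $Z$ the complementary component of a curve $\beta$ disjoint from $\alpha$ that contains $\alpha$ essentially). In that situation the conclusion of the lemma genuinely fails, not just your proof of it: since $c \subseteq \base(\mu')$ and $\alpha \subset Z$, the set $\pi_Z(\mu')$ lies within distance $2$ of $\alpha$ by \lemref{lem2lip}, while $\pi_Z(\mu)$ can be arbitrarily far from $\alpha$ in $\mathcal{C}(Z)$, so $d_Z(\mu',\mu)$ is unbounded. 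The resolution is that the hypothesis must be read as ``$Z$ and $c$ have empty essential intersection'' (so $Z$ neither crosses nor contains any curve of $c$), which is precisely how the lemma is invoked in the paper --- see (b3) in the proof of \propref{propBaseStep1} and (c3) in the proof of \propref{propBaseStep2}, where the condition is ``$Z$ is disjoint from $c$'' in the sense of \secref{secProjection}. Under that reading your containment claim is valid and the rest of your argument goes through; as written, though, you should either strengthen the hypothesis you are working with or explicitly rule out the case $\alpha \subset Z$ for $\alpha \in c$, since your dichotomy silently omits it.
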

   
  \begin{proof}
    
    For any curve $\alpha$ in $c$, the transversal $t_\alpha$ to $\alpha$
    in $\mu'$ was chosen to be uniformly close to $\pi_\alpha(\mu)$. Thus
    $d_\alpha(\mu', \mu)$ is uniformly bounded by a constant depending on
    \s. Now suppose $Z \subseteq Y$ where $Y$ is a component of $\s
    \setminus c$. By construction $\pi_Y(\mu') = \base \big( \Pi_Y(\mu)
    \big)$. Thus, by \eqref{eqInduced}, $d_Z(\mu',\mu) = d_Z \big(
    \Pi_Y(\mu), \mu \big)$ is also uniformly bounded by a constant depending
    only on \s. \qedhere

  \end{proof}

  \subsection{Hierarchies}
  
  \label{secHierarchies} 
  
  In the previous section, we introduced the marking graph \mg which is
  quasi-isometric to \mcg. In this section, we will introduce the theory of
  hierarchies, which is useful for constructing efficient paths in \mg.
  These paths are naturally associated to efficient representations of
  elements in \mcg in terms of the generators, thus justifying \mg as a
  good combinatorial model for \mcg. 
  
  The idea of hierarchies is to associate to every pair of markings a
  family of geodesics in curve complexes that behave well with subsurface
  projections. In order for the theory to work, we need to impose a
  condition on geodesics in curve complexes called \emph{tightness}. Let $Y
  \subseteq \s$ be a domain. A \emph{tight geodesic} $g$ in
  $\mathcal{C}(Y)$ is a sequence $\{ v_0, \ldots, v_n \}$ of simplices in
  $\mathcal{C}(Y)$, such that any sequence of vertices in $g$ is a geodesic
  in $\mathcal{C}(Y)$ in the usual sense, and $v_{i-1} \cup v_{i+1}$ fill a
  subsurface $Z \subset Y$ such that $\partial Z = v_i$. We remark that the
  original definition \cite[Definition 4.2]{MM00} consists of more
  information. 
  
  It is a theorem of Masur-Minsky that any two points in $\mathcal{C}(Y)$
  is connected by at least one and at most finitely many tight geodesics
  \cite[Lemma 4.5 and Corollary 6.14]{MM00}. Henceforth, a geodesic in a
  curve complex will always mean a tight geodesic. By an abuse of notation,
  we will refer to $v_i$'s as vertices of $g$. We will say the
  \emph{length} of $g$ is $n$, and write $|g| = n$. We will say $Y$ is the
  \emph{domain} or \emph{support} of $g$, and write $D(g) = Y$. We will
  sometimes use the notation $[v_0,v_n]$ to mean any geodesic from $v_0$ to
  $v_n$ in $\mathcal{C}(Y)$. Since $\mathcal{C}(Y)$ is $\delta$-hyperbolic,
  all (finitely many) geodesics from $v_0$ to $v_n$ are fellow-travelers.
  
  We now briefly sketch the definition of a hierarchy. For a complete
  definition, see \cite[Definition 4.4]{MM00}. A \emph{hierarchy} on \s is a
  collection $H$ of geodesics such that each geodesic $g \in H$ is
  supported on some domain $Y \subseteq \s$, with a distinguished
  \emph{main geodesic} $g_H=[v_0,v_n]$ supported on $\s$, together with
  some additional structure and satisfying certain conditions which we now
  highlight. A hierarchy $H$ comes equipped with a pair of markings $I(H)$
  and $T(H)$ on \s, called the \emph{initial marking} and the
  \emph{terminal marking} of $H$, respectively, such that $v_0 \subseteq
  \base(I(H))$ and $v_n \subseteq \base(T(H))$. We will usually assume
  $I(H)$ and $T(H)$ are complete clean marking on $\s$. One of the key
  technical conditions of a hierarchy is called \emph{subordinacy}.
  Roughly, given a geodesic $g$ in \cc, one can inductively construct a
  hierarchy $H$ with $g=g_H$. For each vertex $v_i$ in $g_H$, the vertices
  $v_{i-1}$ to $v_{i+1}$ are contained in some component $Z$ of $\s
  \setminus v_i$. The geodesic $h=[v_{i-1},v_{i+1}]$ in $\mathcal{C}(Z)$
  will be an element of $H$ and is subordinate to $g_H$. One can continue
  this process with all vertices of $g_H$ and then with $h$ and so on. 
  
  We list some properties of hierarchies below, after the following
  definition. 
 
  \begin{definition}[Component domain]

     Given a non-annular domain $Y \subseteq \s$, and a multicurve $c$ on
     $Y$, we say $Z$ is a \emph{component domain} of $(Y, c)$ if $Z$ is
     either an essential component of $Y \setminus c$ or $Z$ is a curve in
     $c$. 
  
  \end{definition}
  
  \begin{theorem} The following statements hold for hierarchies.

     \begin{itemize}
       
        \item[1](Existence) Given any markings $\mu$ and $\nu$ on $\s$,
        there exists a hierarchy $H$ with $I(H) = \mu$ and $T(H) = \nu$
        \cite[Theorem 4.6]{MM00}. 
       
        \item[2](Uniqueness of geodesics) For any hierarchy $H$, if $h, h'
        \in H$ have $D(h) = D(h')$, then $h = h'$ \cite[Theorem 4.7]{MM00}.
       
        \item[3] (Completeness) For every geodesic $h \in H$ and vertex $v
        \in h$, if $Y$ is a component domain of $\big( D(h),v \big)$, then
        $Y$ is domain for a geodesic $k \in H$ \cite[Theorem 4.20]{MM00}.
   
     \end{itemize}
   
  \end{theorem}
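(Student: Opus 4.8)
All three statements are proved in Masur--Minsky \cite{MM2}, and my plan would be to follow their recursive construction; I indicate here only its shape and where the real work lies.

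The plan for \textbf{Existence} is to build $H$ by induction, with the complexity of the supporting domain strictly decreasing at each stage. First I would pick vertices $v_0 \in \base(\mu)$ and $v_n \in \base(\nu)$ and a geodesic $[v_0, v_n]$ in \cc, then \emph{tighten} it: replace it by a tight geodesic with the same endpoints, using that each pair $v_{i-1} \cup v_{i+1}$ fills a unique subsurface whose boundary can serve as $v_i$. This tight geodesic becomes the main geodesic $g_H$. Next, for every vertex $v_i$ of $g_H$ and every component domain $Z$ of $(\s, v_i)$, I would insert a tight geodesic in $\mathcal{C}(Z)$ whose endpoints are prescribed by the subordinacy rules --- roughly, the backward endpoint is read off from $\mu$ (or from a neighboring vertex of $g_H$ if $Z$ is ``born'' later), and the forward endpoint from $\nu$ (or a neighboring vertex if $Z$ ``dies'' earlier). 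Iterating this on all newly created geodesics terminates because complexity drops. The hard part --- and the technical heart of \cite{MM2} --- is verifying that the family so produced actually satisfies the subordinacy axioms, i.e.\ that the footprints of the various geodesics interlock consistently when organized by time-order; this is where essentially all the difficulty of the hierarchy machinery sits.

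For \textbf{Uniqueness of geodesics}, the plan is to show that the subordinacy data rigidify $h$ given $D(h)$. Although tight geodesics with fixed endpoints in a curve complex need not be unique, inside a hierarchy the domain $D(h)$ determines the initial and terminal simplices of $h$ (as the suitably interpreted projections of $I(H)$ and $T(H)$ to $D(h)$), and together with the extra data a tight geodesic carries in the sense of \cite{MM2}, this pins down $h$ uniquely. Hence $D(h) = D(h')$ forces $h = h'$.

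For \textbf{Completeness}, the plan is to observe that the recursion in the Existence step does not halt until every component domain of every $(D(h), v)$, with $h \in H$ and $v \in h$, has been assigned a geodesic; so the statement is built into the construction. The only point that needs an argument --- again carried out in \cite{MM2} --- is that this stopping rule is consistent with the subordinacy axioms, i.e.\ that the geodesics one is \emph{forced} to insert are precisely those already in $H$.
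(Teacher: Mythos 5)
The paper offers no proof of this statement: it is a background summary quoted from Masur--Minsky with citations to \cite[Theorems 4.6, 4.7, 4.20]{MM2}, and your proposal likewise defers all the substance to \cite{MM2}, so at the level of what the paper actually does the two are the same. Your sketches of existence (recursive insertion of tight geodesics into component domains, with endpoints dictated by the subordinacy rules, terminating by complexity) and of completeness (the stopping rule of the construction) match the shape of the arguments in \cite{MM2}. One caution on uniqueness: knowing the domain, the endpoints, and the tightness data does \emph{not} pin down a geodesic --- the paper itself records that between two points there are ``at least one and at most finitely many'' tight geodesics --- so the rigidity you gesture at is not the mechanism. The actual argument in \cite{MM2} that a single hierarchy contains at most one geodesic per domain goes through the footprint and time-order machinery; if you were to expand this sketch into a proof, that is the piece you would have to supply.
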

  
  We will sometimes denote an hierarchy from $\mu$ to $\nu$ by
  $H(\mu,\nu)$.   
  The following lemma explains the relationship between a geodesic $h \in
  H$ and the projection of $I(H)$ and $T(H)$ to $D(h)$. 
  
  \begin{lemma}[{\cite[Lemma 6.2]{MM00}}]\label{lemLargeLink}
    
    There exist constants $M_1 > M_2$, depending only on $\s$, such that if
    $H$ is any hierarchy in $\s$ and \[ d_Y \big(I(H),T(H) \big) \ge M_2 \]
    for a subsurface $Y$ in $\s$, then $Y$ is a domain for a geodesic $h
    \in H$.

    Conversely, if $h \in H$ is any geodesic with $Y = D(h)$, then $h$
    fellow travels any geodesic from $\pi_Y \big( I(H) \big)$ to $\pi_Y
    \big( T(H) \big)$ in $\mathcal{C}(Y)$ with a uniform constant. In
    particular, 
    \[ 
       \big| |h| - d_Y\big(I(H), T(H)\big) \big| \le M_1 .
    \]

  \end{lemma}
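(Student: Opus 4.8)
The plan is to prove both halves by induction on the complexity $\xi(\s)$, descending through the hierarchy, with the Bounded Geodesic Image theorem (\thmref{thmBGI}) and the Completeness property of hierarchies doing the essential work.

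For the first assertion I would fix $M_2$ to be much larger than $M_0$ and the Lipschitz constants of \lemref{lem2lip}, and proceed as follows. If $Y = \s$ there is nothing to prove, since $D(g_H) = \s$. If $Y \subsetneq \s$, consider the main geodesic $g_H = [v_0, v_n]$, where $v_0 \subseteq \base(I(H))$ and $v_n \subseteq \base(T(H))$. If $v_0$ or $v_n$ already misses $Y$, pass directly to the component-domain step below. Otherwise $v_0$ and $v_n$ both cut $Y$, so by \lemref{lem2lip} their projections to $Y$ lie within bounded distance of $\pi_Y(I(H))$ and $\pi_Y(T(H))$ respectively; hence $d_Y(v_0, v_n) > M_0$, and \thmref{thmBGI} produces an interior vertex $v_i$ of $g_H$ with $\pi_Y(v_i) = \emptyset$. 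In every case there is a vertex $v_i$ on $g_H$ missing $Y$, so $Y$ lies in (or equals) some component domain $Z$ of $(\s, v_i)$, and by Completeness some $k \in H$ has $D(k) = Z$. Since $Y \subseteq Z$, either $Z = Y$ and $k$ is the geodesic we want, or $Y \subsetneq Z \subsetneq \s$. In the latter case I would invoke the structural properties of hierarchies relating $I(k)$ and $T(k)$ to the restrictions of $I(H)$ and $T(H)$ to $Z$: these give $d_Y(I(k), T(k)) \ge M_2 - c$ for a uniform constant $c$. Replacing $(\s, g_H, I(H), T(H))$ by $(Z, k, I(k), T(k))$ and recursing, the complexity strictly drops, so after at most $\xi(\s)$ steps we reach a geodesic supported exactly on $Y$. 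As the number of steps is bounded in terms of $\s$, the total loss is at most $\xi(\s)\,c$, which I absorb into the initial choice of $M_2$.

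For the converse, given $h \in H$ with $D(h) = Y$ and endpoints $u_0$ and $u_m$, so that $|h| = m$, I would use the standard fact that in a hierarchy the endpoints of a geodesic see the ambient initial and terminal markings: $u_0$ lies within a uniform distance of $\pi_Y(I(H))$, and $u_m$ within a uniform distance of $\pi_Y(T(H))$. This is proved by induction along the subordinacy order: either $h$ descends all the way to $I(H)$, so that $u_0$ and $\pi_Y(I(H))$ agree up to bounded diameter, or $h$ is backward subordinate to some $h' \in H$ whose relevant vertex is already close to $\pi_Y(I(H))$ by the inductive hypothesis, and dually on the terminal side. Since $h$ is a tight geodesic, $m$ is the distance between $u_0$ and $u_m$ in $\mathcal{C}(Y)$, which agrees with $d_Y(u_0, u_m)$ up to an additive constant. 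Chaining the three estimates, $d_Y(I(H), T(H))$ differs from $|h|$ by at most a uniform constant, and taking $M_1$ to be the larger of $M_2$ and all the constants produced here finishes the proof.

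The main obstacle will be bookkeeping rather than concept: one must check carefully that passing from $H$ to a subordinate geodesic changes the relevant subsurface projection $d_Y\big(I(\cdot), T(\cdot)\big)$ by only a bounded amount, and that the claim that the endpoints of $h$ approximate $\pi_Y(I(H))$ and $\pi_Y(T(H))$ holds with one constant independent of how deeply $h$ sits in the hierarchy. Both follow from the structural lemmas of \cite{MM2} on the initial and terminal markings of subordinate geodesics, combined with \lemref{lem2lip}, \lemref{lem3lip}, and \thmref{thmBGI}; the only real care needed is that finitely many applications of these bounded errors, the number being controlled by $\xi(\s)$, still deliver the single pair $M_1 > M_2$ promised in the statement.
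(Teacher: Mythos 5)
This lemma is quoted in the paper directly from \cite{MM2} (it is their Lemma~6.2, the ``Large Link Lemma''); the paper offers no proof of its own, so the only meaningful comparison is with the original Masur--Minsky argument. Your sketch reproduces that argument's architecture faithfully: the forward direction by descending through the hierarchy via the contrapositive of \thmref{thmBGI} plus Completeness, terminating because complexity strictly drops at each passage to a component domain; the converse by induction along the subordinacy chains to show the endpoints of $h$ approximate $\pi_Y(I(H))$ and $\pi_Y(T(H))$.

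The one place where you are leaning on an unproved ingredient is the step ``$d_Y(I(k),T(k)) \ge M_2 - c$'' for the subordinate geodesic $k$, which you attribute to ``structural properties of hierarchies.'' This is precisely the content of \cite[Lemma~6.1]{MM2} (the $\Sigma$-projection lemma), and it is where essentially all of the work in the original proof lives: one must show that if $Y \subseteq D(k)$ and $Y$ is not a component domain of any $(D(k),v)$, then $\diam_Y\bigl(k \cup \mathbf{I}(k) \cup \mathbf{T}(k)\bigr)$ is uniformly bounded, and that $\mathbf{I}(k)$, $\mathbf{T}(k)$ project near the adjacent vertices of the geodesics to which $k$ is subordinate, so that the errors telescope along the nested chain. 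You have correctly identified this as the crux and correctly observed that the number of applications is bounded by $\xi(\s)$, so a single pair $M_1 > M_2$ suffices; but as written the proposal defers rather than supplies that lemma. With that ingredient filled in, the argument is the standard one.
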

  
  For any pair of markings $\mu, \nu \in \mg$, we will call a domain $Y$ a
  \emph{large link} for $\mu$ and $\nu$ if $d_Y(\mu, \nu) \ge M_2$.

  The theorem below summarizes two results that are vital to this paper. To
  simplify the statements we introduce some notations that we will adopt
  for the rest of the paper. Below, $\mathfrak{a}$ and $\mathfrak{b}$
  represent quantities such as distances or lengths, and $k$ and $c$ are
  constants that depend only on \s (unless otherwise noted). 
  
  \begin{notation}
  
    \begin{enumerate}
    
    \item If $\mathfrak{a} \le k \mathfrak{b}+c$, we say $a$ is
    \emph{coarsely bounded} by $b$, and write $\mathfrak{a} \prec
    \mathfrak{b}$

    \item If $\dfrac{1}{k} \mathfrak{b} - c \le \mathfrak{a} \le k
    \mathfrak{b} + c$, we say $a$ is \emph{coarsely equal} to $b$, and
    write $\mathfrak{a} \asymp \mathfrak{b}.$

  \end{enumerate}
  \end{notation}
  
  By the \emph{length} $|H|$ of a hierarchy $H$ we will mean $|H| = \sum_{h
  \in H} |h|$. In the following, the coarse equality on the left is
  \cite[Theorem 6.10]{MM00}. The coarse equality on the right is called the
  distance formula \cite[Theorem 6.12]{MM00}.

  \begin{theorem} \label{thmDF}

     There exists a constant $L_0$ depending only on $S$ such that, for any
     $L \ge L_0$ and any $\mu, \nu \in \mg$ and any hierarchy
     $H=H(\mu,\nu)$, 
     \[
       |H| \quad \asymp \quad d_{\mg}(\mu,\nu) \quad \asymp \sum_{\stackrel{Y
        \subseteq \s}{d_Y(\mu,\nu)\ge L}} d_Y(\mu,\nu).
     \]
     On the right, the constants involved in $\asymp$ depend on $L$.
  \end{theorem}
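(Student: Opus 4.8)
The plan is to prove \thmref{thmDF} by citing the two results of Masur--Minsky that it packages together and explaining how to assemble them, since the excerpt explicitly says the left coarse equality is \cite[Theorem 6.10]{MM2} and the right one is the distance formula \cite[Theorem 6.12]{MM2}. Concretely, I would first establish the middle coarse equality $|H| \asymp d_M(\mu,\nu)$. One direction is cheap: a hierarchy $H(\mu,\nu)$ gives rise to a resolution, a sequence of elementary moves in \mg from $\mu$ to $\nu$ whose length is comparable to $|H|$, so $d_M(\mu,\nu) \prec |H|$. For the reverse inequality $|H| \prec d_M(\mu,\nu)$, I would use \lemref{lem4lip} together with the length estimate in \lemref{lemLargeLink}: for every geodesic $h \in H$ with domain $Y = D(h)$ we have $|h| \le d_Y(\mu,\nu) + M_1 \le 4 d_M(\mu,\nu) + M_1$, but to sum these over all $h \in H$ we need a bound on the \emph{number} of geodesics in $H$, which is itself controlled by $d_M(\mu,\nu)$ via the structure theory of hierarchies — this is exactly the content of \cite[Theorem 6.10]{MM2}, so I would simply invoke it.

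Next I would address the right-hand coarse equality, $d_M(\mu,\nu) \asymp \sum_{d_Y(\mu,\nu)\ge L} d_Y(\mu,\nu)$, for any threshold $L \ge L_0$. The lower bound $\sum_{d_Y(\mu,\nu)\ge L} d_Y(\mu,\nu) \prec d_M(\mu,\nu)$ again follows from \lemref{lem4lip}: each term with $d_Y(\mu,\nu) \ge L$ is at most $4 d_M(\mu,\nu)$, and the number of such $Y$ is uniformly bounded by the number of geodesics in a hierarchy $H(\mu,\nu)$, which we already controlled. The upper bound $d_M(\mu,\nu) \prec \sum_{d_Y(\mu,\nu)\ge L} d_Y(\mu,\nu)$ is the substantive direction. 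Using the middle equality it suffices to bound $|H| = \sum_{h \in H} |h|$ by the right-hand sum. By \lemref{lemLargeLink}, for each $h \in H$ with $D(h) = Y$ we have $|h| \le d_Y(\mu,\nu) + M_1$, and moreover if $|h|$ is large then $d_Y(\mu,\nu) \ge M_2$, so $Y$ is a large link; choosing $L_0 = M_2$ (or larger) ensures every geodesic of length exceeding some fixed constant contributes a domain appearing in the sum. Short geodesics contribute a total bounded by a constant times the number of geodesics, which is again controlled. Summing gives $|H| \prec \sum_{d_Y(\mu,\nu)\ge L} d_Y(\mu,\nu) + (\text{number of } h \in H)$, and absorbing the count term finishes the bound.

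The step I expect to be the main obstacle — and the one I would ultimately defer to \cite{MM2} rather than reprove — is the uniform control on the number of geodesics appearing in a hierarchy $H(\mu,\nu)$ in terms of $d_M(\mu,\nu)$ (equivalently, the ``only finitely many large links, and their count is linearly bounded'' phenomenon). This is not formal from the lemmas quoted in the excerpt; it requires the hierarchy machinery, specifically the fact that geodesics in $H$ of length at least a fixed constant are in bijective-up-to-bounded-multiplicity correspondence with large links, plus a counting argument that these large links cannot proliferate without forcing $d_M(\mu,\nu)$ to grow. Since \thmref{thmDF} is stated as a citation of \cite[Theorems 6.10 and 6.12]{MM2}, the honest proof is: \emph{this is Theorems 6.10 and 6.12 of \cite{MM2}, combined}, with the observation that the middle term $d_M(\mu,\nu)$ is coarsely equal to both $|H|$ (Theorem 6.10) and the subsurface-projection sum (Theorem 6.12, whose statement is already phrased in terms of $d_M$), so the three quantities are pairwise coarsely equal with constants depending on $L$ as indicated.
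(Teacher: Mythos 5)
The paper offers no proof of this statement at all---it is presented purely as a quotation of Theorems 6.10 and 6.12 of \cite{MM2}---and your proposal correctly reduces to exactly that citation, so it takes essentially the same approach. One minor caveat on your scaffolding: in your sketch of the bound $\sum_{d_Y(\mu,\nu)\ge L} d_Y(\mu,\nu)\prec d_M(\mu,\nu)$, multiplying the per-term bound $4\,d_M(\mu,\nu)$ by the count of large links would only give a quadratic bound; the linear bound comes from comparing each $d_Y(\mu,\nu)$ to $|h_Y|$ and summing over $h\in H$ to get $|H|$ itself---but this is precisely the part you correctly defer to \cite{MM2}.
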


  Fix a generating set $\Lambda$ for $\mcg$. To realize the quasi-isometry
  between $\mg$ and $\mcg$, we fix a base marking $\mu_B$ in $\mg$. Then
  $d_{\mg}(\mu_B, f\mu_B) \asymp |f|$, with constants depending only on $\mu_B$
  and $\Lambda$. The following is an immediate consequence of
  \thmref{thmDF}.

  \begin{corollary}[{\cite[Theorem 7.1]{MM00}}]
    
     Let $\mu_B \in \mg$ be a fixed base marking. For any element $f \in
     \mcg$,
     \[
       |H(\mu_B,f\mu_B)| \quad \asymp \quad |f|.
     \]

  \end{corollary}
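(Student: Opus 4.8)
The plan is to derive this corollary directly from \thmref{thmDF} together with the \v{S}varc--Milnor quasi-isometry between $\mcg$ and $\mg$. First I would recall that, by the discussion following \thmref{thmDF}, fixing a base marking $\mu_0 \in \mg$ gives a quasi-isometry $\mcg \to \mg$, $f \mapsto f\mu_0$, where the quasi-isometry constants depend only on $\mu_0$ and $\Lambda$; concretely this says $d_M(\mu_0, f\mu_0) \asymp |f|$. This step is essentially quoted: the orbit map of a proper cocompact isometric action is a quasi-isometry, and we noted in \secref{secMarking} that $\mcg$ acts on $\mg$ properly, cocompactly, and by isometries.

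Next I would invoke the first coarse equality in \thmref{thmDF}, namely $|H| \asymp d_M(\mu,\nu)$ for any hierarchy $H = H(\mu,\nu)$, specialized to $\mu = \mu_0$ and $\nu = f\mu_0$. This gives $|H(\mu_0, f\mu_0)| \asymp d_M(\mu_0, f\mu_0)$ with constants depending only on \s. Chaining the two coarse equalities and using the fact that $\asymp$ is transitive (composing the linear-with-additive-error bounds in each direction absorbs the constants into new constants of the same shape), I obtain $|H(\mu_0, f\mu_0)| \asymp |f|$, with the resulting constants depending only on $\mu_0$ and $\Lambda$.

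There is essentially no obstacle here — the corollary is a bookkeeping consequence of two results already in hand. The only point requiring a word of care is that the constants in \thmref{thmDF} are intrinsic to \s (independent of any marking), so the only dependence on $\mu_0$ and $\Lambda$ enters through the \v{S}varc--Milnor step; this is why the statement of the corollary records dependence on the base marking. I would also note, for completeness, that the existence of the hierarchy $H(\mu_0, f\mu_0)$ is guaranteed by part (1) of the theorem on hierarchies, and that any choice of such a hierarchy works since the coarse equality in \thmref{thmDF} holds for \emph{any} hierarchy with the given endpoints.
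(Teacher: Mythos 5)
Your proposal is correct and matches the paper's argument: the corollary is stated as an immediate consequence of \thmref{thmDF} combined with the \v{S}varc--Milnor quasi-isometry $d_M(\mu_0, f\mu_0) \asymp |f|$ noted just before the statement. Nothing further is needed.
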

  
  Let $Y \subset \s$ be a proper non-annular domain. There is a coarse
  embedding $\Mark(Y) \stackrel{j}{\longrightarrow} \mg$ obtained as
  follows. Fix a marking in each essential component of $\s \setminus Y$
  and a transversal to each curve in $\partial Y$. The map $j$ sends the
  marking $\nu \in \Mark(Y)$ in an obvious way so that $\partial Y
  \subseteq \base\big(j(\nu)\big)$ and for all $\nu_1, \nu_2 \in \Mark(Y)$,
  \begin{equation} \label{eqUndistorted} d_{\Mark(Y)}(\nu_1, \nu_2) \asymp
    d_{\mg} \big( j(\nu_1), j(\nu_2) \big). 
  \end{equation}
  Equation \eqref{eqUndistorted} follows from the distance formula and one
  can make the coarse constants independent of $Y$ and $j$. 
  
  \subsection{Slices} \label{secSlices}

  The connection between paths in \mg and hierarchies come from
  \emph{slices} of a hierarchy. The following definition comes from
  \cite[\S 5]{MM00}.

  \begin{definition}[Slices]

    A \emph{(complete) slice} of a hierarchy $H$ is a set $\tau$ of
    \emph{pointed geodesics} $(h,v)$ in $H$, i.e.~$h \in H$ and $v$ is a
    vertex of $h$, satisfying the following properties: 

    \begin{itemize}
      
      \item[(S1)] Any geodesic $h$ of $H$ appears at most once in $\tau$.

      \item[(S2)] There is a distinguished pair, the \emph{bottom pair}, $(g_H,
        b)$ of $\tau$.    

      \item[(S3)] For every $(k, w) \in \tau$ other than the bottom pair, $D(k)$
        is a component domain of $\big( D(h), v \big)$ for some $(h,v) \in \tau$.

      \item[(S4)] Given $(h,v) \in \tau$, for every component domain $Y$ of
        $\big( D(h), v \big)$ there is a pair $(k,w) \in \tau$ with $D(k) = Y$. 

    \end{itemize}

  \end{definition}

  The \emph{initial slice} $\tau_0$ of $H$ is one where every pair $(h,v)
  \in \tau$ has $v$ the first vertex of $h$. In particular, the main
  geodesic $g_H$ and its initial vertex is a pair in $\tau_0$, and $\tau_0$
  can be constructed inductively using the axioms of slices. Similarly, the
  \emph{terminal slice} of $H$ is defined. 
  
  To any slice $\tau$ we can associate a complete marking $\mu_\tau$ as
  follows. First, let $\mu$ be the marking with \[ \base(\mu) = \{\,\, v
  \,\,\,:\,\,\, (h,v) \in \tau \text{ and } D(h) \text{ is not an
  annulus}\,\, \}. \] For each base curve $\alpha$, if $(k,t) \in \tau$ is
  such that $k$ is a geodesic in $\mathcal{C}(\alpha)$, then let $t$ be the
  transversal to $\alpha$ in $\mu$. The marking $\mu$ is complete but not
  necessarily clean. Any clean marking $\mu_\tau$ compatible with $\mu$
  will be called a \emph{compatible} marking with $\tau$. By
  \lemref{lemCleanMarking}, the number of choices for $\mu_\tau$ is
  bounded. Note that $I(H)$ and $T(H)$ are respectively compatible markings
  with the initial and terminal slice of $H$. We will call any marking
  compatible with some slice in a hierarchy $H$ a \emph{hierarchal
  marking} of $H$. 
  
  Given any slice $\tau$ in $H$, there is a notion of \emph{(forward)
  elementary move} on $\tau$ which is roughly moving a vertex $v$ of some
  pair $(h,v) \in \tau$ forward by one step in the geodesic $h$ to obtain a
  new slice $\tau'$. We write $\tau \to \tau'$. (See \cite[\S 5]{MM00} for a
  precise definition.) If $\mu$ and $\mu'$ are compatible marking with
  $\tau$ and $\tau'$, then by \cite[Lemma 5.5]{MM00}, $d_{\mg}(\mu,\mu')
  \prec 1$. We will write $\mu \to \mu'$ to mean any path in \mg connecting
  $\mu$ to $\mu'$. To prove $|H| \asymp d_{\mg}(I(H),T(H))$, Masur and
  Minsky in \cite{MM00} established the existence of a \emph{resolution} of
  $H$, which is a sequence of forward elementary moves \[ \tau_0 \to \cdots
  \to \tau_n, \] where $\tau_0$ is the initial slice and $\tau_n$ is the
  terminal slice of $H$. For each $\tau_i$ in the resolution, let $\mu_i$
  be a compatible marking with $\tau_i$. The corresponding path in \mg \[
  I(H) = \mu_0 \to \cdots \to \mu_n = T(H)\] is a quasi-geodesic with
  uniform constants, and $d_{\mg}\big(I(H),T(H)\big) \asymp n \asymp |H|$.
  A fact in \cite{Min10} that we will sometime need is that, for any slice
  $\tau$ in $H$, there is a resolution of $H$ containing $\tau$. 
  
  The following statements are true for hierarchal markings and follow from
  \cite{MM00}.

  \begin{lemma} \label{lemTriangle}
    
    Let $H$ be a hierarchy. If $\mu \in \mg$ is a hierarchal marking of
    $H$, then
    \begin{align} \label{eqMultiplicative}
      d_{\mg} \big(I(H), \mu\big) + d_{\mg} \big(\mu, T(H)\big) \prec
      d_{\mg} \big(I(H), T(H)\big)
    \end{align}
    There exists a constant $M$ depending only on \s such that for any
    domain $Y \subseteq \s$,
    \begin{align} \label{eqAddictive}
      d_Y\big(I(H), \mu\big) + d_Y\big(\mu, T(H)\big) \le  d_Y\big(I(H),
      T(H)\big) + M
    \end{align}
    
    \begin{proof}
       
      Let $g$ be a quasi-geodesic in \mg containing $\mu$ coming from a
      resolution of $H$. By \cite{MM00}, $g$ is a quasi-geodesic from $I(H)$
      to $T(H)$ with uniform constants, hence \eqref{eqMultiplicative}
      holds.
      
      Given $Y \subseteq \s$, let $\pi_Y(g)$ be the projection of $g$ to
      $\mathcal{C}(Y)$ (project each vertex of $g$ to $\mathcal{C}(Y))$.
      The projection $\pi_Y(g)$ is a quasi-geodesic in $\mathcal{C}(Y)$
      with uniform constant. By hyperbolicity of $\mathcal{C}(Y)$,
      $\pi_Y(g)$ stays uniformly close to any geodesic connecting $\pi_Y
      \big( I(H) \big)$ and $\pi_Y \big( T(H) \big)$ of $\pi_Y(g)$. Thus
      there exists a constant $M_Y$ such that
      \begin{align*}
         d_Y\big(I(H), \mu\big) + d_Y\big(\mu, T(H)\big) \le  d_Y\big(I(H),
         T(H)\big) + M_Y.
      \end{align*}
      Since there are only finitely many subsurfaces of \s up to
      homeomorphism, the constant $M = \max_Y \{M_Y\}$ depends only on \s
      and achieves \eqref{eqAddictive}. \qedhere 
       
    \end{proof}

  \end{lemma}
  
  \subsection{Time order} \label{secTimeOrder}

  The geodesics or domains of geodesics in a hierarchy $H$ satisfy a
  partial order $<_t$, called \emph{time order}. We refer to \cite[\S
  4.6]{MM00} for the definition. The idea comes from the observation that
  the vertices of a geodesic $g$ are linearly ordered: $v_i<v_j$ if $i<j$.
  Combining this observation with the subordinacy structure on $H$, one can
  try to order a pair of geodesics $h, h' \in H$. 
  In the following, we summarize some main results and state some useful
  consequences of time order.
  
  %

  \begin{theorem}[{\cite[Lemma 4.18 and 4.19]{MM00}}] \label{thmTimeOrder}
    
     There exists a relation $<_t$, called \emph{time-order}, on domains of
     geodesics in $H$ such that:
     \begin{itemize}
      
        \item The relation $<_t$ is a strict partial order.

        \item If $h$ and $h'$ are geodesics in $H$ such that $Y = D(h)$ and
        $Z = D(h')$ interlock, then either $Y <_t Z$ or $Z <_t Y$.

        \item If $Y \subset Z$, then $Y$ and $Z$ are not time-ordered.

        \item If $Y$ and $Z$ lie in different component
        domains of $ \big( D(m),v \big)$, for some geodesic $m$ in $H$ and
        $v \in m$, then $Y$ and $Z$ are not time-ordered.
        
    \end{itemize}

  \end{theorem}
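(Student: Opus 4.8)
The plan is to build $<_t$ from the subordinacy structure of $H$, letting the Bounded Geodesic Image theorem (\thmref{thmBGI}) supply the geometric content. The elementary observation underlying everything is this: for $m\in H$ and a domain $Y$ whose projection $\pi_Y$ is nonempty on some but not all vertices of $m$, tightness of $m$ together with \thmref{thmBGI} forces the set $\phi_m(Y)$ of vertices of $m$ that \emph{miss} $Y$ to be a single connected subsegment of $m$ — the \emph{footprint} of $Y$ in $m$. One then declares $Y<_t Z$ to hold exactly when there is a common ancestor $m\in H$ into which both $Y$ and $Z$ are subordinate — $Y$ in the forward sense, $Z$ in the backward sense — with $\phi_m(Y)$ lying strictly before $\phi_m(Z)$ along $m$. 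All four assertions reduce to understanding footprints and how subordinacy chains fit together.

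\emph{Strict partial order.} Irreflexivity is immediate: $Y<_t Y$ would require the segment $\phi_m(Y)$ to precede itself strictly, and the footprint of a fixed domain in a fixed $m$ (for a fixed subordinacy role) is unique by uniqueness of geodesics in a hierarchy. Transitivity is the substance of the proof and the step I expect to be the main obstacle: given $X<_t Y$ witnessed by $m_1$ and $Y<_t Z$ witnessed by $m_2$, one must produce a single witness for $X<_t Z$. When $m_1=m_2$ this is merely transitivity of the vertex order on $m_1$; the real work is the case $m_1\ne m_2$, where one uses that $Y$ is subordinate into both. The structural constraints on how two geodesics in a hierarchy can be positioned relative to one another — together with the large-link lemma \lemref{lemLargeLink} — force $m_1$ and $m_2$ into a comparable relationship and let one transport the footprints of $X$ and $Z$ onto a common ancestor without disturbing their relative order. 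Pushing this footprint-transport through is where the bulk of the argument lies.

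\emph{Interlocking domains are comparable.} Let $h,h'\in H$ with $Y=D(h)$, $Z=D(h')$ interlocking. Because $\partial Y$ projects nontrivially into $Z$ and $\partial Z$ into $Y$, the subordinacy chains descending to $h$ and $h'$ must merge, producing a common ancestor $m\in H$ into which both are subordinate. The point is that $\phi_m(Y)$ and $\phi_m(Z)$ are \emph{disjoint} subsegments of $m$: a vertex of $m$ lying in both footprints would be disjoint from both $Y$ and $Z$ at once, which — since the footprints record precisely where $m$ enters and leaves each domain, and $Y$, $Z$ interlock — cannot happen. Two disjoint connected subsegments of $m$ are linearly ordered, and reading off which domain sits forward and which backward gives $Y<_t Z$ or $Z<_t Y$.

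\emph{Nested domains, and domains lying in distinct component domains of a common $(D(m_0),v)$, are incomparable.} Here one shows no witness can exist. If $Y\subset Z$, then in any common ancestor $m$ every vertex missing $Z$ also misses $Y$, so $\phi_m(Z)\subseteq\phi_m(Y)$; a subsegment contained in another cannot lie strictly before or strictly after it, so neither $Y<_t Z$ nor $Z<_t Y$ is possible. If instead $Y$ and $Z$ are contained in distinct component domains of the same $(D(m_0),v)$, then in any common ancestor $m$ both footprints $\phi_m(Y)$ and $\phi_m(Z)$ contain the portion of $m$ corresponding to the separating locus $v$, so they overlap and again no strict inequality can hold. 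This disposes of the remaining two bullets. (The order is only partial on purpose: disjoint domains not arising as siblings over a common $(D(m),v)$ genuinely may or may not be time-ordered, which is why the statement is phrased as it is.)
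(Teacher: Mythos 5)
The paper does not prove this statement: it is quoted directly from Masur--Minsky \cite[Lemmas 4.18 and 4.19]{MM2} and used as a black box, so there is no in-paper argument to compare yours against. Judged on its own terms, your sketch identifies the right framework (footprints $\phi_m(Y)$, their contiguity via tightness, and the definition of $<_t$ by comparing footprints inside a common comparison geodesic), but two of the four bullets are not actually established, and one step is wrong as stated.

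The argument for the interlocking case rests on the claim that a simplex of $m$ lying in $\phi_m(Y)\cap\phi_m(Z)$ ``would be disjoint from both $Y$ and $Z$ at once, which cannot happen.'' That is false: a curve can perfectly well be disjoint from two interlocking subsurfaces (take $Y$ and $Z$ overlapping in one handle and $v$ a curve supported elsewhere on $D(m)$). The correct argument takes the comparison geodesic $m$ to be \emph{minimal} among geodesics of $H$ whose domain contains both $Y$ and $Z$: if some simplex $v$ of $m$ lay in both footprints, then $Y$ and $Z$, being overlapping, would lie in the \emph{same} component domain $W$ of $(D(m),v)$, and completeness of $H$ would produce a geodesic supported on $W\subsetneq D(m)$ containing both, contradicting minimality. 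Second, transitivity --- which you yourself flag as ``where the bulk of the argument lies'' --- is not carried out; in \cite{MM2} this is the technical heart of Lemma 4.18 and requires careful bookkeeping of the forward and backward subordinacy chains, not merely ``footprint transport.'' Relatedly, your incomparability arguments for the last two bullets only show that the footprints overlap in one particular ancestor, whereas the definition of $<_t$ is existential over comparison geodesics; to conclude that \emph{no} witness exists you need the uniqueness of the comparison geodesic, which is itself part of what Lemma 4.18 proves. As it stands the proposal is a plausible outline of the Masur--Minsky proof rather than a proof.
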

  
  Note that the ambiguous case is when $D(h)$ and $D(h')$ are disjoint;
  sometimes they are time-ordered and sometimes not. The issue of disjoint
  domains will come up in this paper. 
  
  The constant $M_1$ of \lemref{lemLargeLink} can be chosen so that
  following hold. 

  \begin{lemma}[{\cite[Lemma 6.11]{MM00}}]\label{lemTimeOrder}
    
    Let $H$ be a hierarchy. Suppose $Y$ and $Z$ are domains for geodesics
    in $H$ such that $Y$ and $Z$ interlock. If $Y <_t Z$, then
    $d_Y\big(\partial Z, T(H)\big) \le M_1$ and $d_Z\big(I(H), \partial
    Y\big) \le M_1$.

  \end{lemma}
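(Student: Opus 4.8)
The plan is to prove only the first inequality, $d_Y(\partial Z, T(H)) \le M_1$; the second follows by applying it to the \emph{reversed} hierarchy (the hierarchy from $T(H)$ to $I(H)$ obtained by reversing every geodesic of $H$), under which the roles of $I(H)$ and $T(H)$ are interchanged and the time order $<_t$ is reversed, so that $Y <_t Z$ becomes $Z <_t Y$. Throughout, all coarse constants depend only on \s, and additive errors will be absorbed into $M_1$ at the end. Let $h \in H$ be the geodesic with $D(h) = Y$, and let $e$ be its terminal vertex. By \lemref{lemLargeLink}, $h$ fellow-travels a geodesic in $\mathcal{C}(Y)$ from $\pi_Y(I(H))$ to $\pi_Y(T(H))$, so its endpoint satisfies $d_Y(e, T(H)) \prec 1$. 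Hence it suffices to bound $d_Y(\partial Z, e)$.

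Next I would unwind the definition of time order. Since $Y$ and $Z$ interlock and $Y <_t Z$, there is a geodesic $m \in H$, which we take with $D(m)$ of minimal complexity, and vertices $v, w$ of $m$ such that $Y$ is contained in (equal to, or subordinate to) a component domain $Y_1$ of $(D(m), v)$, and $Z$ is contained in a component domain of $(D(m), w)$. If $v = w$ then $Y$ and $Z$ would be nested or disjoint, contradicting \thmref{thmTimeOrder}; so $v \ne w$, and $Y <_t Z$ forces $v < w$ in $m$. Since $Y \subseteq Y_1$ interlocks $Z$, the domains $Y_1$ and $Z$ are not disjoint; if $Z \subsetneq Y_1$ we replace \s by $Y_1$ and $H$ by its sub-hierarchy and induct on $\xi(Y_1) < \xi(D(m))$; and if $Y_1$ interlocks $Z$ then $Y_1 <_t Z$, and using \lemref{lem3lip} and \lemref{lem2lip} to transfer projections from $Y_1$ to $Y$ and to relate the terminal vertices of the geodesics on $Y$ and $Y_1$, it is enough to bound the corresponding quantity on $Y_1$. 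Thus we reduce to the \emph{direct case}: $Y$ is a component domain of $(D(m), v)$ and $Z$ a component domain of $(D(m), w)$, with $v < w$ in $m$. I expect this reduction — correctly invoking the definition of $<_t$ from \cite[\S4]{MM2} and checking the induction is well-founded — to be one of the two delicate points.

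In the direct case, write $v = v_i$, $w = v_j$ with $i < j$. Since $Y$ interlocks $Z$, some curve of $\partial Z \subseteq v_j \cup \partial D(m)$ crosses a curve of $\partial Y \subseteq v_i \cup \partial D(m)$; as $\partial D(m)$ misses the interior of $D(m)$ this crossing is between curves of $v_j$ and $v_i$, which on a tight geodesic forces $j \ge i+2$ and gives $\pi_{D(m)}(\partial Z) \ne \emptyset$, coarsely equal to $v_j$. Each vertex $v_\ell$ with $i < \ell \le j$ cuts $Y$: for $\ell = i+1$ this is part of the forward subordinacy of $h$ to $m$ at $v_i$, and for $\ell \ge i+2$ it meets $v_i \subseteq \partial Y$ essentially. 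So \thmref{thmBGI} gives $\diam_Y([v_{i+1}, v_j]) \le M_0$, and since the terminal vertex $e$ of $h$ is coarsely $\pi_Y(v_{i+1})$, we get $d_Y(e, \pi_Y(v_j)) \prec 1$. Finally $Y \subsetneq D(m)$, so \lemref{lem3lip} and \lemref{lem2lip} give $\pi_Y(\partial Z) \asymp \pi_Y(\pi_{D(m)}(\partial Z)) \asymp \pi_Y(v_j)$; chaining the coarse equalities bounds $d_Y(\partial Z, e)$, hence $d_Y(\partial Z, T(H))$, by a uniform constant, which we absorb into $M_1$.

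The second delicate point, and the main obstacle, is the case where $Y$ is an annular domain $A(c)$ with $c \in v_i$: then $v_{i+1}$ is disjoint from $c$ and hence does not cut $A(c)$, so the identification $e \asymp \pi_Y(v_{i+1})$ fails. Here one instead uses directly that $e \asymp \pi_{A(c)}(T(H))$, which follows from \lemref{lemLargeLink} together with the fact that annular geodesics in a hierarchy compare directly to $I(H)$ and $T(H)$, and then runs a separate Bounded Geodesic Image argument in $\mathcal{C}(D(m))$ exploiting that $d_{A(c)}(I(H), T(H))$ is large; this annular bookkeeping is the most technical part of the proof, while the rest is the routine combination of \lemref{lemLargeLink}, \thmref{thmBGI}, and the Lipschitz estimates \lemref{lem2lip} and \lemref{lem3lip} assembled above.
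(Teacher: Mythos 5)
First, a caveat: the paper gives no proof of this lemma at all --- it is quoted as \cite[Lemma 6.11]{MM2} --- so the only comparison available is with Masur--Minsky's original argument. Your ``direct case'' is essentially their computation: index by footprints, so $v_i=\max\phi_m(Y)$ where $\phi_m(Y)$ is the set of vertices of the comparison geodesic $m$ that miss $Y$; then every $v_\ell$ with $\ell>i$ cuts $Y$, \thmref{thmBGI} bounds $\diam_Y\big([v_{i+1},v_j]\big)$, direct forward subordinacy identifies the terminal vertex $e$ of the geodesic supported on $Y$ with $\pi_Y(v_{i+1})$ up to a bounded error, and \lemref{lemLargeLink} relates $e$ to $\pi_Y(T(H))$. (Note that your stated reason that $v_\ell$ cuts $Y$ for $\ell\ge i+2$ --- that $v_\ell$ meets $v_i$ --- is not valid, since $v_\ell$ can cross a curve of $v_i$ that is not in $\partial Y$ without entering $Y$; the correct reason is simply that $v_\ell\notin\phi_m(Y)$ once $v_i$ is chosen as the maximum of the footprint. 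The symmetric treatment of the second inequality via the reversed hierarchy is fine.)

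The genuine gap is the reduction to the direct case. The definition of $Y<_tZ$ only supplies a comparison geodesic $m$ with $Y\subsetneq D(m)$ and $\phi_m(Y)\ne\emptyset$; it does not make $Y$ a component domain of $(D(m),v)$. Your proposed fix --- pass to the component domain $Y_1\supseteq Y$, prove the bound for $Y_1$, and ``transfer projections from $Y_1$ to $Y$'' using \lemref{lem2lip} and \lemref{lem3lip} --- fails: a bound on $d_{Y_1}\big(\partial Z,T(H)\big)$ gives no control on $d_Y\big(\partial Z,T(H)\big)$ for $Y\subsetneq Y_1$. Two curves at bounded distance in $\mathcal{C}(Y_1)$ can have arbitrarily large projection distance to a subsurface of $Y_1$ (this failure of projection bounds to descend to subsurfaces is precisely why hierarchies have layers at all), and neither cited lemma reverses this. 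What is actually needed, and what Masur--Minsky prove by induction along the forward subordinacy sequence $k=k_0\searrow k_1\searrow\cdots\searrow g_H$ starting from the geodesic $k$ supported on $Y$ (Lemma 6.1 and its consequences in \cite[\S 6]{MM2}), is the statement that for \emph{every} geodesic $m\in H$ with $Y\subsetneq D(m)$ and every vertex $u$ of $m$ occurring after $\max\phi_m(Y)$, one has $d_Y(u,e)$ uniformly bounded, where $e$ is the terminal vertex of $k$. With that in hand your BGI computation applies directly to the given comparison geodesic and no reduction is needed; without it the proof does not close. The annular case you flag at the end is just a shift of the footprint by one index and is absorbed by the same statement.
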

  
  Using slices, the constant $M_1$ can be chosen so the following version
  of \lemref{lemTimeOrder} also holds.

  \begin{lemma} \label{lemTimeOrder1}
    
    With the same hypothesis as above. There exists a hierarchy marking
    $\nu$ such that $d_Y\big(\nu, T(H)\big) \le M_1$ and $d_Z(I(H), \nu)
    \le M_1$.

  \end{lemma}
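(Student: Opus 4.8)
The plan is to upgrade Lemma~\ref{lemTimeOrder} from a statement about $\partial Y$, $\partial Z$ and the endpoint markings $I(H), T(H)$ to a statement about a single hierarchy marking $\nu$ that simultaneously sees $Y$ "already done" and $Z$ "not yet started". First I would recall from Section~\ref{secSlices} that every slice $\tau$ of $H$ extends to a resolution $\tau_0 \to \cdots \to \tau_n$, and conversely any marking $\nu$ compatible with such a $\tau$ is a hierarchy marking. So it suffices to produce a single slice $\tau$ with the property that, in the resolution containing $\tau$, the geodesic supported on $Y$ has already been fully traversed (its vertex has reached the terminal vertex $v_n^Y$ of the geodesic $h$ with $D(h)=Y$) while the geodesic supported on $Z$ has not yet moved off its initial vertex $v_0^Z$. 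Then for such a $\nu = \mu_\tau$, the vertex of $h$ recorded in $\tau$ is the last vertex of $h$, so $\pi_Y(\nu)$ is coarsely $\pi_Y(T(H))$, giving $d_Y(\nu, T(H)) \le M_1$ after absorbing the (bounded) discrepancy between $\pi_Y(v_n^Y)$ and $\pi_Y(T(H))$ coming from Lemma~\ref{lemLargeLink} and the slice-to-marking passage; symmetrically $\pi_Z(\nu)$ is coarsely $\pi_Z(I(H))$, so $d_Z(I(H), \nu) \le M_1$.

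The existence of such a slice is where the hypothesis $Y <_t Z$ (with $Y,Z$ interlocking) does the work. Since $Y$ and $Z$ interlock they are time-ordered, and $Y <_t Z$ means, via the definition of time order in \cite[\S4.6]{MM2}, that there is an intermediate chain of subordinacies forcing every resolution to first advance past (the relevant vertex of) the geodesic on $Y$ before it can begin advancing the geodesic on $Z$ — equivalently, in the sequence of forward elementary moves, the move that starts the $Z$-geodesic comes strictly after the move that finishes the $Y$-geodesic. Concretely, I would pick the slice $\tau$ to be the one occurring in a resolution immediately after the $Y$-geodesic has been fully resolved; by the ordering property such a $\tau$ still has the $Z$-geodesic (if it appears in $\tau$ at all) sitting at its initial vertex, or else $Z$'s geodesic has not yet been created in the slice, in which case $\pi_Z(\mu_\tau)$ still agrees coarsely with $\pi_Z(I(H))$ because no move affecting $\mathcal{C}(Z)$ has occurred. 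Either way one reads off the two bounds.

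The main obstacle is the bookkeeping around the constant $M_1$: one must check that the errors introduced at each coarse step — the $M_1$ of Lemma~\ref{lemLargeLink} relating a geodesic to $d_Y(I(H),T(H))$, the bounded ambiguity in choosing a clean marking compatible with a slice (Lemma~\ref{lemCleanMarking}), the $\prec 1$ cost of a single elementary move (\cite[Lemma 5.5]{MM2}), and the passage through Lemma~\ref{lemTimeOrder} itself — can all be packaged into a single enlarged choice of $M_1$ depending only on $\s$, exactly as asserted. This is routine given that each ingredient already has a constant depending only on $\s$, but it requires care to make sure the slice-theoretic argument does not secretly reintroduce a dependence on $H$. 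Once the constant is fixed, the statement follows by choosing $\nu = \mu_\tau$ for the slice $\tau$ described above.
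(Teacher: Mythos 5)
The paper does not actually write out a proof of Lemma~\ref{lemTimeOrder1}; it is asserted with the single remark ``Using slices, the constant $M_1$ can be chosen so the following version of \lemref{lemTimeOrder} also holds.'' Your proposal fills in exactly that intended argument: since $Y <_t Z$, the domains $Y$ and $Z$ cannot carry ``intermediate'' vertices in a common slice, so any resolution must fully traverse $h_Y$ before it begins $h_Z$; take $\nu = \mu_\tau$ for the transition slice and read off the two bounds from \lemref{lemLargeLink}. That is the right route and the outline is correct.

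The one step you should shore up is the claim that when $h_Z$ has not yet been created in $\tau$, one has $\pi_Z(\mu_\tau)$ coarsely equal to $\pi_Z(I(H))$ ``because no move affecting $\mathcal{C}(Z)$ has occurred.'' As stated this is not a proof: every elementary move, whether or not it is supported in a domain meeting $Z$, replaces $\mu_{\tau_i}$ by a marking at bounded $d_M$-distance and hence can move $\pi_Z$ by up to $4$ (Lemma~\ref{lem4lip}), and there are unboundedly many such moves before $\tau$. What pins $\pi_Z(\mu_\tau)$ near the initial vertex of $h_Z$ is the footprint/projection machinery of \cite[\S 5]{MM2} (the analysis of how a slice projects to the domain of a geodesic it does not contain, with the side determined by the slice's position relative to the footprint of $Z$ on its comparison geodesics) --- equivalently, the same argument the paper does spell out in its proof of Lemma~\ref{lemSeparatingMarking}, where a vertex $c$ occurring before or after $b$ along $g_H$ forces the separating marking to be $M_1$-close to $T(H)$ or $I(H)$ in the corresponding component domain. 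With that citation in place of the quoted sentence, and with your bookkeeping of the constants, the proof is complete.
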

  
  \begin{proof}
     
     By assumption, both $Y$ and $Z$ are domains for geodesics for a
     hierarchy $H$ with $Y <_t Z$. Let $k \in H$ be the geodesic supported
     on $Y$ and let $w \in k$  be the terminal vertex of $k$. Let $\tau$ be
     a slice $\tau$ with $(k,w) \in \tau$ (such $\tau$ exists by
     \cite[Lemma 5.8]{Min10}). Let $\nu$ a hierarchal marking compatible
     with $\tau$. Since $k$ is supported on $Y$, by definition of a slice,
     there exists some pair $(h,u) \in \tau$ such that $Y$ is a component
     domain of $\big( D(h),v \big)$. By definition of a compatible marking,
     we have $\partial Y \subseteq \base(\nu)$, which implies that, by
     \lemref{lemTimeOrder}, $d_Z( I(H), \nu)$ is uniformly bounded. Since
     $w$ is the terminal vertex of $k$, any resolution of $H$ containing
     $\tau$ does not pass through $Y$ from $\tau$ to $T(H)$. Thus, $d_Y
     \big( \nu, T(H) \big)$ is also uniformly bounded. This finishes the
     proof of the lemma. \qedhere 

  \end{proof}

  We may choose $M_1$ so the following also holds: 

  \begin{lemma}[{\cite[Lemma 1]{BM08}}]\label{lemTimeOrder2}
    
    With the same hypothesis as above. For any marking $\mu \in \mg$,
    either $d_Y\big(\mu, T(H)\big) \le 2M_1$ or $d_Z(I(H),\mu) \le 2M_1$.

  \end{lemma}
  
  \subsection{Separating Marking} \label{secSeparating}

  The following definition and lemma do not explicitly appear in
  \cite{MM00}. Although the lemma is a direct consequence of hierarchies, we
  offer a brief sketch of its proof.

  \begin{definition}[Separating marking]

     Let $H$ be a hierarchy. A slice $\tau$ is called a \emph{separating
     slice} if for every pair $(h,v) \in \tau$, with $h \ne g_H$, has the
     property that $v$ is the terminal vertex of $h$. We remark that once
     the bottom pair $(g_H,b)$ is fixed, then the separating slice $\tau$
     containing $(g_H,b)$ is uniquely determined by the axioms of slices.
     In particular, if $b$ is the terminal vertex of $g_H$, then $\tau$ is
     the terminal slice of $H$. If $\tau$ is a separating slice containing
     $(g_H,b)$, then any marking $\mu$ compatible with $\tau$ is called a
     \emph{separating marking} at $b$.  
  
  \end{definition}
  
  The constant $M_1$ of \lemref{lemLargeLink} can be chosen so that the
  following hold. 

  \begin{lemma} \label{lemSeparatingMarking}

    Let $H$ be a hierarchy. Let $b$ be any vertex in $g_H$ and let $\mu$ be
    a separating marking at $b$. Then for any proper domain $Y \subset \s$,
    either $d_Y \big( I(H),\mu \big) \le M_1$ or $d_Y \big( \mu,T(H) \big)
    \le M_1$.
  
  \end{lemma}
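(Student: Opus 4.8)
The plan is to locate $\pi_Y(\mu)$ along the (unique) tight geodesic $h\in H$ with $D(h)=Y$, and to show that compatibility with a \emph{separating} slice forces this projection to lie uniformly close to one of the two endpoints of $h$. The first, easy, reduction: since $\mu$ is a hierarchal marking, the second inequality of \lemref{lemTriangle} gives $d_Y(I(H),\mu)+d_Y(\mu,T(H))\le d_Y(I(H),T(H))+M$, so if $d_Y(I(H),T(H))<M_2$ then both summands are at most $M_2+M$ and we are done. Assume then $d_Y(I(H),T(H))\ge M_2$. By the first statement of \lemref{lemLargeLink} there is a geodesic $h\in H$ with $D(h)=Y$, and $h\ne g_H$ because $Y$ is proper; write $h=[u_0,\dots,u_{|h|}]$. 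By the converse statement of \lemref{lemLargeLink}, $h$ fellow-travels every geodesic of $\mathcal{C}(Y)$ joining $\pi_Y(I(H))$ to $\pi_Y(T(H))$, so by $\delta$-hyperbolicity of $\mathcal{C}(Y)$ (\thmref{thmHyperbolic}) the vertex $u_0$ is uniformly close to $\pi_Y(I(H))$ and $u_{|h|}$ is uniformly close to $\pi_Y(T(H))$. It therefore suffices to show that $\pi_Y(\mu)$ is uniformly close to $u_0$ or to $u_{|h|}$.

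Next I would bring in resolutions. By the result of \cite{Min2} recalled in \secref{secSlices}, fix a resolution $\tau_0\to\cdots\to\tau_m$ of $H$ that contains the separating slice, say $\tau=\tau_j$, together with compatible markings $\mu_0=I(H),\dots,\mu_m=T(H)$; replacing $\mu$ by $\mu_j$ perturbs every subsurface projection by a bounded amount (\lemref{lemCleanMarking}, \lemref{lem4lip}), so we may take $\mu=\mu_j$. In a resolution each geodesic $k\in H$ is resolved over a \emph{contiguous} block of moves, appearing first with its initial vertex $I(k)$ and leaving with its terminal vertex $T(k)$; moreover the projections $\pi_{D(k)}(\mu_i)$ trace $k$ over that block and stay within a uniform distance of $I(k)$ (resp.\ of $T(k)$) for all indices $i$ below (resp.\ above) the block. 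Because $\tau$ is \emph{separating}, every $(k,w)\in\tau$ with $k\ne g_H$ has $w=T(k)$; equivalently, at time $j$ no geodesic other than $g_H$ is strictly inside its block. For our $h$ this leaves exactly three cases: (a) $(h,T(h))\in\tau$; (b) the block of $h$ ends before index $j$; (c) the block of $h$ begins after index $j$.

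In case (a) the vertex $T(h)$ lies in $\base(\mu)$ (or, if $Y$ is annular, $\pi_Y(\mu)$ equals the transversal $T(h)$), so $\pi_Y(\mu)$ lies within $2$ of $u_{|h|}$ by \lemref{lem2lip}, and $d_Y(\mu,T(H))\le M_1$ after enlarging $M_1$. In case (b), the property of resolutions recalled above gives $\pi_Y(\mu_j)$ within a uniform distance of $u_{|h|}$, hence of $\pi_Y(T(H))$, so again $d_Y(\mu,T(H))\le M_1$. Case (c) is the mirror image, with $I(H)$ in place of $T(H)$, and yields $d_Y(I(H),\mu)\le M_1$. Taking $M_1$ to be the maximum of the finitely many constants that arose completes the proof.

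The step I expect to be the real work is the property of resolutions used in cases (b) and (c): once the block of $h$ is finished, nothing should drag $\pi_Y(\cdot)$ back toward $\pi_Y(I(H))$ (and symmetrically before the block begins). A geodesic $k$ active after that block cannot be nested in $Y$, which is inactive, so $D(k)$ is either disjoint from $Y$, hence invisible to $\pi_Y$; or contains $Y$, in which case $k$ has already passed the footprint of $\partial Y$ and the $Y$-projection of the part of $k$ that cuts $Y$ has diameter at most $M_0$ by \thmref{thmBGI}; or interlocks $Y$, in which case $Y <_t D(k)$, and \lemref{lemTimeOrder} (with \thmref{thmTimeOrder}) pins $\partial D(k)$ within $M_1$ of $\pi_Y(T(H))$ while \thmref{thmBGI} again controls the cutting part of $k$. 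In all cases $\pi_Y(\mu_i)$ stays near $u_{|h|}$; making this bookkeeping precise, using the interlocking case of time order together with bounded geodesic image, is the crux.
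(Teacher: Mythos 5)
Your argument is correct, and it reaches the dichotomy by a genuinely different mechanism than the paper's proof. The paper first reduces (implicitly by induction on subordinacy depth) to the case where $Y$ is a component domain of $(g_H,c)$ for a vertex $c$ of the main geodesic, and then simply compares the position of $c$ with the bottom vertex $b$ of the separating slice: $c$ before $b$, $c$ after $b$, or $c=b$, the last case being handled exactly as your case (a) via axiom (S4) and the definition of a separating slice. Your route instead embeds the separating slice in a resolution and uses the contiguous-block/monotone-sweep property of resolutions, which handles all large links $Y$ uniformly without the reduction to the top level; the price is that your cases (b) and (c) must import the fact that $\pi_Y(\mu_i)$ is pinned near $T(h)$ (resp.\ $I(h)$) for all slices after (resp.\ before) the block of $h$. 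That fact is true and your closing paragraph correctly identifies the tools (footprints plus \thmref{thmBGI} for domains containing $Y$, \thmref{thmTimeOrder} and \lemref{lemTimeOrder} for interlocking domains, invisibility for disjoint domains) --- this is the same footprint-chasing that the paper compresses into its one-line appeal to \lemref{lemTimeOrder}, so neither write-up is more complete than the other on this point. The one place your sketch is thinnest is the assertion that no geodesic nested in $Y$ can be active outside the block of $h$: a subdomain of $Y$ could in principle be reached in a slice through a descending chain that does not pass through $Y$, so this case should either be ruled out via axiom (S4) and uniqueness of geodesics (which force $h$ itself into any slice whose chain makes $Y$ a component domain) or simply absorbed into the ``contains $Y$ or interlocks $Y$'' analysis applied to the smallest domain in the chain not containing $Y$; since $\base(\mu_i)$ always contains a curve cutting $Y$ that is controlled by one of your three cases, and $\pi_Y(\base(\mu_i))$ has diameter at most $2$ by \lemref{lem2lip}, this does not affect the conclusion.
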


  \begin{proof}

    We may assume $Y \subset \s$ has $d_Y\big(I(H), T(H)\big) > M_1$. Since
    $M_1 \ge M_2$, $Y$ is a domain for a geodesic $h_Y \in H$. Without a
    loss of generality, we may assume $Y$ is a component domain of
    $(g_H,c)$, for some $c$ in $g_H$. If $c$ appears before $b$ along
    $g_H$, then $d_Y\big(\mu, T(H)\big) \le M_1$. Similarly, if $c$ appears
    after $b$ along $g_H$, then $d_Y(I(H), \mu) \le M_1$. Both of these
    facts can be seen as a consequence of \lemref{lemTimeOrder1}. The
    remaining case is $b = c$. In this case, the separating slice
    containing $(g_H,b)$ must contain $(h_Y,v)$, where $v$ is the terminal
    vertex of $h_Y$. Therefore, it must be that $d_Y\big(\mu, T(H)\big) \le
    M_1$.
  \end{proof}

  \begin{remark}

     In our definition of separating slice, the preference for terminal
     vertices is arbitrary. \lemref{lemSeparatingMarking} would remain true
     if we allowed only initial vertices or a mixture of initial and
     terminal.
     
  \end{remark}
  
  \subsection{Collecting constants} \label{secConstants}

  For the rest of the paper, we will fix the following set of constants.

  Let $M_0$ be the constant of \thmref{thmBGI}. Let $L_0$ be the constant
  of \thmref{thmDF}. Let $M_1$ and $M_2$ be the constants coming from
  \lemref{lemLargeLink}. We will also fix one constant $M_3$ for
  \lemref{lem3lip}, \lemref{lemCleanMarking}, Equation \eqref{eqInduced},
  \lemref{lemMarkingExtension}, and \lemref{lemTriangle}. We may assume
  $M_1 \ge M_2, M_3$. In addition, since up to homeomorphism there are only
  finitely many subsurfaces of \s, we can choose a hyperbolicity constant
  $\delta$ which works for all $\mathcal{C}(Z)$, $Z \subseteq \s$. 
  
  
\section{Two technical lemmas}
  
  \label{secTwoLemmas} 
  
  This section contains some technical results about finite order mapping
  classes. 
  
  To prove L.B.C.~property, we need to understand the geometry of the
  action of finite order mapping classes on \mg. The first observation is
  that finite order elements act on \mg with coarse fixed points. We will
  eventually prove that the action has the property that the translation
  distance of a finite order element $f$, or $d_{\mg}(\mu_B, f\mu_B)$ where
  $\mu_B$ is the base marking, is coarsely bounded by the distance from
  $\mu_B$ to the fixed point sets of $f$. In other words, finite order
  elements of \mcg act \emph{elliptically} on \mg.
  
  In this section, we consider what happens if a fixed point $\mu$ of $f$
  is far from $\mu_B$ relative to the translation distance of $f$. By the
  distance formula, there must be some $X \subseteq S$ such that
  $d_X(\mu_B,\mu)$ is large relative to $d_X(\mu_B,f\mu_B)$. With some
  additional conditions, $X$ will be called a \emph{bad domain} for $\mu$
  and we will prove a structure theorem for the set of bad domains in a
  hierarchy $H(\mu_B,\mu)$. In the next section, we will use this structure
  theorem to construct a coarse fixed point of $f$ close to $\mu_B$
  relative to the translation distance of $f$. From there, we can derive
  L.B.C.~property for finite order mapping classes by a standard argument.
  
  \subsection{Fixed points and symmetric points}

  We state some useful facts about finite order mapping classes below. 

  \begin{lemma} \label{lemFiniteConj}

    There are finitely many conjugacy classes of finite order elements in
    \mcg.

  \end{lemma}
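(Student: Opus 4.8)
The plan is to deduce this from \lemref{lemOrderBound} together with the Nielsen realization philosophy and a Riemann–Hurwitz count, then (for a self-contained elementary argument) from residual finiteness of \mcg. First I would recall that by \lemref{lemOrderBound} every finite order $f$ has order bounded by $N = N(\s)$. The cleanest route is to invoke residual finiteness of \mcg (Grossman \cite{Gro}): there is a finite-index normal subgroup $\Gamma \trianglelefteq \mcg$ that is torsion-free (such a $\Gamma$ exists because there are only finitely many torsion elements to kill once orders are bounded, and one can intersect the kernels of finitely many maps to finite groups that separate the relevant torsion). Then every finite order element of \mcg injects into the finite quotient $Q = \mcg/\Gamma$; conjugacy in \mcg of elements mapping to a fixed conjugacy class of $Q$ is then controlled, but this last step is not immediate, so I would instead prefer the geometric argument below.

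The geometric approach: realize a finite order mapping class $f$ (of order $n \le N$) as an honest isometry of a hyperbolic structure on \s, by Nielsen realization (Kerckhoff, or just Fenchel–Nielsen for cyclic groups, which is classical). Then $\s \to \s/\langle f \rangle$ is a branched cover of the quotient orbifold, with $n \le N$ and with Riemann–Hurwitz $2g-2 = n(2g' - 2 + \sum(1 - 1/e_i))$. Since $g$, $p$, and $n$ are all bounded in terms of \s, there are only finitely many possibilities for the topological type of the quotient orbifold and the branching data. Two finite order mapping classes are conjugate in \mcg exactly when the corresponding branched covers are equivalent as covers (i.e. there is a homeomorphism of \s carrying one cyclic action to a conjugate of the other), and by the classification of surface branched covers with bounded data there are only finitely many such equivalence classes. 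Hence finitely many conjugacy classes of cyclic actions, and a fortiori finitely many conjugacy classes of finite order elements.

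The main obstacle is making the last identification — ``conjugacy class of $f$ in \mcg'' $\leftrightarrow$ ``equivalence class of the cyclic branched cover'' — precise: one needs that a homeomorphism of \s conjugating $f$ to $f'$ descends to the quotient orbifolds and that, conversely, an orbifold equivalence lifts; and one needs finiteness of the set of such covers, which is a standard but slightly fiddly Nielsen-type argument (it reduces to finiteness of the number of $N_{\text{Aut}}$-orbits of epimorphisms $\pi_1^{orb} \to \zz/n$ for finitely many orbifold groups). For the purposes of this paper I would simply cite this as well-known — e.g. it follows from \cite{Gro} or from the finiteness statements in \cite{BeHa, Mos3} — rather than reproving it, since only the qualitative statement ``finitely many'' is needed. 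Thus the proof is: bound the order by $N$ via \lemref{lemOrderBound}; realize finite order classes by isometries; apply Riemann–Hurwitz to bound the covering data; conclude finitely many conjugacy classes.
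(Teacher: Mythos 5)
The paper offers no proof of this lemma at all: it is stated as a known classical fact (alongside \lemref{lemOrderBound}), so there is nothing to compare your argument against line by line. Your geometric sketch is the standard proof and is correct in outline: bound the order by $N(\s)$, realize a finite order class as an isometry of a hyperbolic metric via Nielsen realization, use Riemann--Hurwitz to bound the topological type and branching data of the quotient orbifold, and conclude that there are only finitely many equivalence classes of such cyclic branched covers. The one point you rightly flag as delicate is worth stating precisely: to pass from ``conjugate in \mcg'' to ``topologically conjugate cyclic actions'' you need that two isotopic finite order homeomorphisms are actually conjugate by a homeomorphism isotopic to the identity; this is a genuine theorem (essentially Nielsen, or a consequence of Kerckhoff's result via the structure of the fixed locus in Teichm\"uller space), not a formality, and it is the step that makes the bijection with covering data legitimate. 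Your opening detour through residual finiteness is correctly abandoned --- a torsion-free finite-index subgroup bounds orders of torsion but does not by itself bound the number of conjugacy classes of torsion elements --- and could simply be deleted. As written, citing the geometric argument as classical is entirely consistent with what the paper itself does.
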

  
  \begin{corollary}\label{lemOrderBound}

    There exists a constant $N$, depending only on \s, such any finite
    order element $f \in \mcg$ has $\ord (f) \le N$.

  \end{corollary}

  \begin{definition}
    
    Let $f \in \mcg$ be of finite order. We define the set of \emph{$r$-fixed
    points} of $f$ as 
    \[ 
      \fix_r(f) = \{\,\, \mu \in \mg \,\,\, : \,\,\, d_{\mg}(\mu,f\mu) \le r
       \,\,\}. 
    \]  
    Also, define the set of \emph{$r$-symmetric points} for $f$ to be 
    \[
       \wfix_r(f) = \{ \,\, \mu \in \mg \,\,\, :
       \,\,\, d_Y( \mu, f\mu) \le r, \, \forall\, Y \subseteq \s \,\,\}.
    \] 

  \end{definition}

  \begin{lemma}\label{lemFixedPoints}
     
    There exists a constants $R_1$ depending only on $\s$ such that
    $\fix_{R_1}(f) \ne \emptyset$ and $\wfix_{R_1}(f) \ne \emptyset$, for
    any finite order element $f \in \mcg$.

  \end{lemma}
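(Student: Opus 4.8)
\textbf{Proof proposal for \lemref{lemFixedPoints}.}

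The plan is to exhibit, for any finite order $f \in \mcg$, an explicit point of $\mg$ whose $f$-orbit is uniformly bounded in \emph{every} subsurface projection; the fixed-point statement $\fix_{R_1}(f) \ne \emptyset$ then follows immediately from the distance formula (\thmref{thmDF}), since control of all $d_Y(\mu, f\mu)$ by $R_1$ bounds the finitely-many large terms and hence bounds $d_M(\mu, f\mu)$. So the real content is $\wfix_{R_1}(f) \ne \emptyset$. The key input is \lemref{lemOrderBound}: the order of $f$ is bounded by $N = N(\s)$, so the cyclic group $\langle f \rangle$ has at most $N$ elements, and averaging over this group is a bounded operation.

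First I would start with \emph{any} marking $\mu_0 \in \mg$ and look at its orbit $\mathcal{O} = \{\mu_0, f\mu_0, \ldots, f^{n-1}\mu_0\}$, where $n = \ord(f) \le N$. The goal is to replace $\mu_0$ by a marking $\mu$ which is a "coarse barycenter" of this orbit: a point $\mu$ with $d_M(\mu, f^i\mu_0)$ roughly equal for all $i$, since then $d_M(\mu, f\mu) = d_M(f^{-1}\mu, \mu)$ (because $f$ acts by isometries and permutes the orbit) and one can try to beat it down. More precisely, the natural approach is to build $\mu$ subsurface-by-subsurface using the induced-marking machinery: for each domain $Y \subseteq \s$, the orbit projects to a set $\pi_Y(\mathcal{O}) \subseteq \mathcal{C}(Y)$ of at most $N$ points, which by $\delta$-hyperbolicity of $\mathcal{C}(Y)$ (uniform $\delta$ by the choice in \secref{secConstants}) has a center of mass $c_Y$ that is moved a uniformly bounded amount by $f$, since $f$ permutes $\pi_Y(\mathcal{O})$ up to the coarse constant of $\pi_Y \circ f = f \circ \pi_{f^{-1}Y}$. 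The subtlety is that these barycenters $c_Y$ need not be realized by a single coherent marking — this is exactly the issue flagged in the introduction — so the cleanest route is to take a hierarchy $H = H(\mu_0, f\mu_0)$, pick a point "halfway along" (for instance a separating marking at the midpoint vertex $b$ of the main geodesic $g_H$, using \lemref{lemSeparatingMarking}), and use the structure of \lemref{lemSeparatingMarking} together with \lemref{lemTriangle} to see that for every proper domain $Y$, $d_Y(\mu, f\mu) \le d_Y(\mu, \mu_0) + d_Y(\mu_0, f\mu_0) + d_Y(f\mu_0, f\mu)$ is controlled once we know $d_Y(\mu_0, f\mu_0)$ is comparable on both halves.

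The cleanest self-contained argument, and the one I would actually write, is an iteration/contraction argument: define $\Phi(\nu)$ to be a midpoint of a hierarchy $H(\nu, f\nu)$; then $d_M(\Phi(\nu), f\Phi(\nu))$ is (coarsely) at most $\tfrac{1}{2} d_M(\nu, f\nu)$ plus a uniform additive error, because the midpoint of $H(\nu,f\nu)$ is moved by $f$ to a midpoint of $H(f\nu, f^2\nu)$, and these two hierarchies fellow-travel near their common endpoint region up to the hierarchy constants. Iterating $\Phi$ starting from $\mu_0$, the displacement $d_M(\mu_i, f\mu_i)$ satisfies $a_{i+1} \le \tfrac12 a_i + M$ for a uniform $M$, hence eventually $a_i \le 2M + 1 =: R_1'$; stopping there gives a point in $\fix_{R_1'}(f)$. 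To upgrade to $\wfix$, I would run the same iteration but track \emph{all} projections simultaneously: the $d_Y$-displacement contracts by the same mechanism using \thmref{thmBGI} (bounded geodesic image) to control how projections to $Y$ change along a resolution of the hierarchy, together with \lemref{lemTimeOrder} and \corref{corTimeOrder2} to handle the disjoint-domain ambiguity. I expect the \textbf{main obstacle} to be precisely this last point: showing the $d_Y$-displacements all contract \emph{uniformly and simultaneously}, since a priori distinct domains interact through time-order, and one must argue that a midpoint marking cannot be "far" from the orbit in infinitely many domains at once — this is where \lemref{lemLargeLink} and the finiteness of large links for any fixed pair of markings, combined with $\ord(f) \le N$, are essential, and where the bookkeeping is genuinely delicate.
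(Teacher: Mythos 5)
There is a genuine gap. The paper's proof is short and rests on two inputs that your proposal never invokes: Nielsen Realization (Kerckhoff), which produces a hyperbolic metric $\rho$ on \s fixed by $f$, and \lemref{lemFiniteConj}, the finiteness of conjugacy classes of finite order elements. Taking $\mu$ to be a marking of minimal total $\rho$-length gives $d_M(\mu,f\mu)=R_f$ depending only on the conjugacy class of $f$ (a conjugate $\omega f\omega^{-1}$ fixes $\omega_*\rho$ and admits the marking $\omega\mu$), and $R_1$ is the maximum of $R_f$ over the finitely many classes; $\wfix_{R_1}(f)\ne\emptyset$ then follows from $\fix_{R_1}(f)\ne\emptyset$ by \lemref{lem4lip} or the distance formula. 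Your reduction in the opposite direction ($\wfix$ to $\fix$ via \thmref{thmDF}) is fine, but the construction you propose for producing the coarse fixed point in the first place does not work.

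The specific failure is the claimed contraction $a_{i+1}\le\tfrac{1}{2}a_i+M$ for the midpoint map $\Phi$. Your justification is that $H(\nu,f\nu)$ and $H(f\nu,f^2\nu)$ ``fellow-travel near their common endpoint region,'' but \mg is not hyperbolic, and two hierarchy paths sharing the endpoint $f\nu$ can diverge immediately (for instance into product regions coming from disjoint large links), so the triangle inequality yields only $a_{i+1}\prec a_i$ with no definite contraction factor. Even in a genuinely hyperbolic space the halving inequality is false for a general isometry; what saves the day there is that a bounded-order isometry forces the geodesic $[x,fx]$ to pass near a fixed point, and transplanting that fact to \mg is precisely the hard content of \secref{secTwoLemmas}--\secref{secFinite} --- machinery the paper deploys for the quantitative statement $d_M(\mu_0,\mu)\prec d_M(\mu_0,f\mu_0)$ of \thmref{thmFiniteOrder}, not for the bare existence statement of this lemma, which it gets essentially for free from Nielsen Realization. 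You correctly sense that controlling all $d_Y$-displacements simultaneously is delicate, but the gap occurs one step earlier: the single $d_M$-displacement is never shown to decrease at all.
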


  \begin{proof}
    
    Choose any $\mu \in \mg$ and let $R_f = d_{\mg}(\mu, f\mu)$. If $g = \omega
    f \omega^{-1}$ for some $\omega \in \mcg$, then $d_{\mg}(\omega \mu,
    g\omega \mu) = d_{\mg}(\mu, f\mu) \le R_f$. Thus $\fix_{R_f}(g) \ne
    \emptyset$ for all $g$ in the conjugacy class of $f$. Using
    \lemref{lemFiniteConj}, we can let $R_1$ be the maximum of the
    constants $R_f$ ranging over all conjugacy classes. This gives
    $\fix_{R_1}(f) \ne \emptyset$ for all finite order element $f \in
    \mcg$. Using the distance formula, we can choose $R_1$ so
    that $\wfix_{R_1}(f) \ne \emptyset$ as well.
  \end{proof}
  
  Henceforth, we will fix $R_1$ to be the minimal constant satisfying
  \lemref{lemFixedPoints}.

  \begin{remark} \label{remFixedPoint} 
  
    We can describe the geometry of the subset $\fix_{R_1}(f) \subset \mg$.
    By Nielsen Realization, any finite order element $f \in \mcg$ can be
    realized as an isometry of a hyperbolic surface $X$ \cite{Ker83}. The
    quotient $\bar{X} = X/f$ is an orbifold. One can coarsely identify
    $\fix_R(f)$ with $\operatorname{Mark}(\bar{X})$. The map $X \to
    \bar{X}$ is a (branched) covering map. By \cite{RS09}, the lifting of
    $\operatorname{Mark}(\bar{X})$ to $\operatorname{Mark}(X)=\mg$ is a
    quasi-isometric embedding.
  
  \end{remark}
  
  \subsection{An example}

  Before proceeding to the first technical lemma, let's discuss a
  motivating example. The following refers to \figref{figFixedPoint}.

  Consider the closed surface $\s_2$ of genus two. Let $f$ be the mapping
  class of order two which permutes the two holes of $S_2$. Let $\alpha$ be
  the separating curve in \s indicated in \figref{figFixedPoint}. Let $X$
  and $Y$ be the pair of once-punctured tori in $S_2$ with boundary
  $\partial X = \alpha = \partial Y$ The map $f$ permutes $X$ and $Y$.
  Using this and the fact that $X$ and $Y$ are disjoint, we can construct a
  family of coarse fixed points of $f$ as follows (see
  \remref{remFixedPoint}). Since $X$ is a once-punctured torus,
  $\mathcal{C}(X)$ is homeomorphic to the Farey graph $\mathcal{F}$. Via
  the map $f$, we can also identify $\mathcal{C}(Y)$ with $\mathcal{F}$.
  After this identification, markings on $X$ or $Y$ correspond to edges in
  $\mathcal{F}$. Choose any marking $\nu$ on $X$. By the action of $f$, we
  get a marking $f(\nu)$ on $Y$, which is represented by the same edge in
  $\mathcal{F}$. Choose a transverse curve $\beta$ to $\alpha$ so that $\mu
  = \nu \cup f(\nu) \cup \alpha \cup \beta$ is a clean marking. Since
  $f(\mu) = \mu$, $\mu$ is a fixed point of $f$.

  See \figref{figFixedPoint} for a concrete example of a fixed point $\mu$
  of $f$ from this construction, where we have color-coded the curves so
  that the red or vertical curves represent the base curves of a marking
  $\mu$ and the blue or horizontal curves represent the transversal curves
  of $\mu$. In the example, let $\nu_1$ be the marking on $X$ obtained by
  the $(0,1)$ and $(1,0)$ curves. 
  The marking $\mu = \nu \cup f(\nu) \cup \alpha \cup \beta$ is a
  $0$--fixed point of $f$. 
  
  \begin{figure}[htbp]

    \def\hole#1{%
    \begin{scope}[shift={#1}, scale=.8]%
      \draw (-.705,.077) arc (200:340: .75 and .5);%
      \draw (.564,-.077)  arc (20:160: .6 and .4);%
    \end{scope}%
    }

    \begin{center}

      \begin{tikzpicture}[thick]

        \coordinate                (a1) at (45: 1.5 and 1);
        \coordinate[shift={(3,0)}] (b1) at (135: 1.5 and 1);
        
        \coordinate[shift={(3,0)}] (a2) at (45: 1.5 and 1);
        \coordinate[shift={(6,0)}] (b2) at (135: 1.5 and 1);
        
        \coordinate[shift={(6,0)}] (a3) at (45: 1.5 and 1);
        \coordinate[shift={(9,0)}] (b3) at (135: 1.5 and 1);
        
        \coordinate                (c1) at (-45: 1.5 and 1);
        \coordinate[shift={(3,0)}] (d1) at (-135: 1.5 and 1);
        
        \coordinate[shift={(3,0)}] (c2) at (-45: 1.5 and 1);
        \coordinate[shift={(6,0)}] (d2) at (-135: 1.5 and 1);
        
        \coordinate[shift={(6,0)}] (c3) at (-45: 1.5 and 1);
        \coordinate[shift={(9,0)}] (d3) at (-135: 1.5 and 1);
        
        \draw (45:1.5 and 1) arc (45:315: 1.5 and 1);
        \hole{(0,0)}

        \draw (a1)       .. controls +(.3,-.2) and +(-.1,0)   .. (1.5,0.52);
        \draw (1.5,0.52) .. controls +(.1,0)   and +(-.3,-.2) .. (b1);
        
        \draw (c1)        .. controls +(.3,.2) and +(-.1,0)  .. (1.5,-0.52);
        \draw (1.5,-0.52) .. controls +(.1,0)  and +(-.3,.2) .. (d1);

        \draw[shift={(3,0)}] (135: 1.5 and 1) arc (135:-135: 1.5 and 1);
        \hole{(3,0)}

        \draw[blue] (0,0) ellipse (20pt and 10pt);
        \draw[blue] (3,0) ellipse (20pt and 10pt);
        \draw[blue] (.5,0) .. controls +(.3,.3) and +(-.3,.3) ..(2.5,0);

        \draw[purple] 
        (1.5,0.52) .. controls (1.6,0.3) and (1.6,-0.3) .. (1.5,-0.52);
        \draw[purple, dotted] 
        (1.5,0.52) .. controls (1.4,0.3) and (1.4,-0.3) .. (1.5,-0.52);

        \draw[purple, shift={(0,0)}]
        (0,-1) .. controls (0.1,-.745) and (0.1,-.445) .. (0,-.19);
        \draw[purple, dotted, shift={(0,0)}]
        (0,-1) .. controls (-0.1,-.745) and (-0.1,-.445) .. (0,-.19);

        \draw[purple, shift={(3,0)}]
        (0,-1) .. controls (0.1,-.745) and (0.1,-.445) .. (0,-.19);
        \draw[purple, dotted, shift={(3,0)}]
        (0,-1) .. controls (-0.1,-.745) and (-0.1,-.445) .. (0,-.19);
        
        \draw[->] (0,1.2) .. controls +(.5,.5) and +(-.5,.5) .. 
        node[midway,above] {$f$} (3,1.2);
        
        \node at (-1.8,0) {$X$};
        \node at (4.8,0) {$Y$};
        \node at (1.5,.7) {$\alpha$};
        \node at (2,.35)  {$\beta$};
        \node at (1.5,-1.5) {$\mu$};
        \node at (-.3,-.6) {$\nu$};
        \node at (3.5,-.6) {$f(\nu)$};
        
        \begin{scope}[shift={(8,0)}]
        
           \node at (0,2.3) {$\mathcal{F}$};
           \node at (-.3,.3) {$\nu$};
           \node at (-.4,-.3) {$f(\nu)$};
           \node at (-.3,0) {$\shortparallel$};
           \draw (0,0) circle (2cm);
           \draw (0,-2) -- (0,2);
           \filldraw (24:2) circle (.07);
           \filldraw (47:2) circle (.07);
           \filldraw (343:2) circle (.07);
           \filldraw (314:2) circle (.07);
           \node at (33 :1.45) {$z_1$};
           \node at (332:1.25) {$z_2$};

        \end{scope}

        \begin{scope}[shift={(8,0)}]
           \clip (0,0) circle (2cm); 
           \draw (335:2.40cm) arc (0:360:.5cm);
           \draw (30:2.35cm) arc (0:360:.4cm);
        \end{scope}

        \filldraw[purple] (8,2) circle (.07);
        \filldraw[blue] (8,-2) circle (.07);

      \end{tikzpicture}

    \end{center}

    \caption{On the left, the curves represent a $0$--fixed point $\mu$ for
      the order two element $f \in \mathcal{MCG}(\s_2)$ that permutes the
      holes of $\s_2$. On the right is the Farey graph $\mathcal{F}$ which
      is isomorphic to $\mathcal{C}(X)$ and $\mathcal{C}(Y)$;
      $\mathcal{C}(X)$ and $\mathcal{C}(Y)$ are identified via $f$.
      Markings on $X$ or $Y$ correspond to edges in $\mathcal{F}$.} Here,
      $\Pi_X(\mu) = \nu$ and $\Pi_Y(\mu) = f(\nu)$ represent the same edge
      in $\mathcal{F}$. The base marking $\mu_B$ (which is not drawn on the
      left) has $\Pi_X(\mu_B) = z_1$ and $\Pi_Y(\mu_B) = z_2$.

    \label{figFixedPoint} 

  \end{figure}

  Consider a base marking $\mu_B$ constructed as follows. For simplicity,
  we will assume $\alpha$ is a base curve of $\mu_B$. Choose two edges
  $z_1$ and $z_2$ in $\mathcal{F}$ that are very far apart. We will let
  $z_1$ be the marking in $X$ and $z_2$ be the marking in $Y$. Now choose a
  transverse curve $\beta'$ to $\alpha$ so that $\mu_B = z_1 \cup z_2 \cup
  \alpha \cup \beta'$ is clean. Since $z_1$ and $z_2$ are far, $\mu_B$ is
  itself not a coarse fixed point of $f$. Let $\mathcal{Z}_B$ be the
  collection of domains for which $d_Z(\mu_B,f\mu_B) \ge L_0$, where $L_0$
  is the constant of \thmref{thmDF}. Since $\alpha$ is a base curve of
  $\mu_B$, if $Z \in \mathcal{Z}_B$ then either $Z \subseteq X$ or $Z
  \subseteq Y$. Also, since $d_Z(\mu_B, f\mu_B) = d_{f(Z)}(\mu_B, f\mu_B)$,
  if $Z \in \mathcal{Z}_B$ then $f(Z) \in \mathcal{Z}_B$. Finally, since
  $d_X(\mu_B, f\mu_B) = d_\mathcal{F}(z_1,z_2)$ is large, $X$ (and $Y$) is
  in $\mathcal{Z}_B$. By \thmref{thmDF},
  \[
     d_{\mg} (\mu_B, f\mu_B) \asymp \sum_{Z \in \mathcal{Z}_B} d_Z(\mu_B,
     f\mu_B). 
  \] 
  
  To find a fixed point $\mu$ of $f$ ``close'' to $\mu_B$, 
  \begin{align} \label{eqClose} 
     d_{\mg}(\mu_B, \mu) \prec d_{\mg}(\mu_B, f\mu_B),
  \end{align}
  consider the following construction. Let $g$ be a geodesic in
  $\mathcal{F}$ connecting $\base(z_1)$ and $\base(z_2)$ (note that the
  convex hull of $\base(z_1)$ and $\base(z_2)$ is a finite set of
  geodesics). Let $\nu$ be any edge in $g$, which we will regard as a
  marking in $X$. Let $\beta$ be a transverse curve to $\alpha$ so that
  $d_\alpha(\beta, \beta')$ is uniformly bounded and $\mu = \nu \cup f(\nu)
  \cup \alpha \cup \beta$ is clean (\lemref{lemCleanMarking}). We show
  $\mu$ is ``close'' to $\mu_B$. By assumption, $d_\alpha(\mu_B,\mu)$ is
  uniformly bounded. Since $\alpha$ is contained in both $\mu_B$ and $\mu$,
  if $Z$ is any domain which contains $\alpha$ or is crossed by $\alpha$,
  we have $d_Z(\mu_B, \mu) \le 4$. For any $Z \subseteq X$, we have
  (ignoring some addictive errors) \[ d_Z (\mu_B, \mu) = d_Z(z_1, \nu) \le
  d_Z \big( z_1, f(z_2) \big) = d_Z(\mu_B, f\mu_B).\] Similarly, for any
  $Z \subseteq Y$, 
  \[ 
    d_Z(\mu_B, \mu) = d_Z \big(z_2, f(\nu) \big) \le d_Z \big(z_2, f(z_1)
    \big) = d_Z(\mu_B, f\mu_B). 
  \] 
  Let $\mathcal{Z}_{\mu}$ be the set of domains for which $d_Z(\mu_B,
  \mu) \ge L_0$. By the above computations, if $Z \in \mathcal{Z}_\mu$, then $Z
  \in \mathcal{Z}_B$. Thus we have 
  \begin{align*}
    d_{\mg}(\mu_B, \mu_1) 
    &\asymp \sum_{Z \in \mathcal{Z}_\mu} d_Z(\mu_B, \mu_1) \\
    & \le \sum_{Z \in \mathcal{Z}_\mu} d_Z(\mu_B, f\mu_B)\\
    & \le \sum_{Z \in \mathcal{Z}_B} d_Z(\mu_B, f\mu_B) \\
    & \asymp d_{\mg}(\mu_B, f\mu_B)
  \end{align*}
  Hence $\mu$ satisfies \eqref{eqClose}. We emphasize that, by varying the
  choice of the edge in $g$, we obtain a family of fixed points of $f$
  ``close'' to $\mu_B$. 

  In the following and in the subsequent section, we generalize this
  example. The general situations could be much more complicated; for
  instance, the assumption that $\mu_B$ contains $\alpha$ simplified the
  example quite a bit. The reason why our construction worked is, because
  in every domain $Z \subseteq \s$, $d_Z(\mu_B, \mu) \le
  d_Z(\mu_B,f\mu_B)$. Thus, if a coarse fixed point $\mu$ is not ``close''
  to $\mu_B$, then there should be some $Z \subseteq \s$ for which
  $d_Z(\mu_B, \mu) > d_Z(\mu_B, f\mu_B)$. This is the motivation behind
  \defref{defBadDomains} of a bad domain $Z$ for $\mu$ (the actual
  definition contains a slightly stronger condition.) In our construction
  of coarse fixed points, we relied heavily on the fact that $X$ and $Y$
  are disjoint and $X = f(Y)$. In general, we will also try to look for a
  domain $X$ such that $\{ f^i(X)\}$ are all pairwise disjoint. The
  structure result for bad domains, \lemref{lemBadDomains}, shows that if
  $X$ is a bad domain for $\mu$, then $X$ and its orbits are all disjoint
  and are all (essentially) bad domains for $\mu$. In \secref{secFinite},
  we will show that, when a coarse fixed point $\mu$ of $f$ does not have
  any bad domains, then $\mu$ will be the desired marking ``close'' to
  $\mu_B$ (\propref{propEmpty}). Otherwise, we will show how to use the
  disjointness result of a bad domain $X$ and its orbits to construct a new
  coarse fixed point of $f$ ``closer'' to $\mu_B$ (see
  \secref{secBaseStep}). Finitely many iteration of this construction will
  lead to a desired coarse fixed point of $f$ ``close'' to $\mu_B$ (see
  \secref{secIteration}). 
  
  \subsection{Bad domains and the first technical lemma}
  
  \label{secBadDomains} 

  We remark that \lemref{lemFiniteConj} does not a priori help us with
  L.B.C.~property as each conjugacy class has infinitely many elements, but
  it will play an essential role later. 
  
  In the following, let $\mu_B \in \mg$ be the fixed base marking. We
  recall notations of \secref{secConstants} and let $R_1$ be the minimal
  constant satisfying \lemref{lemFixedPoints}. Set 
  \begin{equation} \label{eqTheta} 
     \Theta = 6M_1 + 4\delta. 
  \end{equation}
  
  Let $f \in \mcg$ be of finite order. Fix $N$ to be the smallest constant
  satisfying \corref{lemOrderBound}. For any proper domain $X \subset \s$,
  let $L_X=L_X(f)$ be the integer such that $f^{L_X + 1}$ is the first
  return map of $f$ to $X$. Note that for any $X$, $L_X < \ord(f) \le N$.

  \begin{definition}[Bad domains]\label{defBadDomains} Let $R\ge R_1$ and
  let $\mu \in \mg$ be any marking. We say a domain $X \subseteq \s$ is a
  \emph{$R$-bad domain} for $\mu$ (and $f$) if
  \begin{itemize} 
    
    \item For $X = \s$, 
      \begin{equation}\label{eqBD1}
        d_\s(\mu_B, \mu) > d_\s(\mu_B,f\mu_B) + R.
      \end{equation}
  
    \item For $X \ne \s$,  
      \begin{equation}\label{eqBD2}
        d_X(\mu_B, \mu) > 2N \Big( \max_{0 \le i \le L_X} \big\{
        d_{f^i(X)}(\mu_B,f\mu_B) \big\} \Big) + NR + \Theta.
      \end{equation}

  \end{itemize} 

  \end{definition}

  Denote by $\Omega(\mu, R, f)$ or $\Omega(\mu, R)$ the set of all $R$-bad
  domains for $\mu$ (and $f$). Note that if $R' > R$, then $ \Omega(\mu,
  R') \subseteq \Omega(\mu, R).$ We remark that the constant ``$2N$'' in
  the definition of bad domains will not play a role until the next
  section. 

  \begin{definition}[Partial order on $\Omega(\mu,R)$]
    
    We endow $\Omega(\mu, R)$ with a  partial order ``$\vartriangleleft$''
    following these rules. Let $X, Y \in \Omega(\mu, R)$, and let $H =
    H(\mu_B, \mu)$ be a fixed hierarchy.
    \begin{enumerate} 
      
      \item[(O1)] If $\xi(X) < \xi(Y)$, then $X \vartriangleleft Y$. In
        particular, if $\s \in \Omega(\mu, R)$, then $\s$ is the maximal
        element.  
    
      \item[(O2)] If $\xi(X) = \xi(Y)$ and $X <_t Y$ in $H(\mu_B,\mu)$,
      then $Y \vartriangleleft X$.

    \end{enumerate}

  \end{definition}

  \begin{definition}[Complexity of $\Omega(\mu, R)$] \label{defComplexity}

    If $\Omega(\mu,R)$ is nonempty, then the complexity of the maximal
    element in $\Omega(\mu,R)$ is called the \emph{complexity of
    $\Omega(\mu, R)$}, denoted by $\xi(\mu,R)$. The minimal complexity over
    all subsurfaces of $\s$ is $-1$, coming from an annulus. For
    convenience, we will let $\xi(\emptyset) = -2$. We make the trivial
    observations that whenever 
    \[ 
       \Omega(\mu', R') \subseteq \Omega(\mu, R) \qquad \Longrightarrow \qquad
    \xi(\mu', R') \le \xi(\mu, R). 
    \]
   
  \end{definition}

  The following is a consequence of the definition of $R$-bad domains for
  $\mu$ and $f$, if $\mu$ happens to be a $R$--symmetric point for $f$. 
  
  \begin{lemma}[Structure of bad domains]\label{lemBadDomains}

    Let $R \ge R_1$ and let $\mu \in \wfix_R(f)$. If $X \in \Omega(\mu, R)$
    and $X \ne \s$, then 
    \[ 
       X, f(X), \ldots f^{L_X}(X) 
    \] 
    are all domains for geodesics in $H(\mu_B, \mu)$ and are all pairwise
    disjoint.

  \end{lemma}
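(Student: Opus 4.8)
I would prove the two assertions — that $X, f(X), \dots, f^{L_X}(X)$ all support geodesics of $H(\mu_0,\mu)$, and that they are pairwise disjoint — in that order.

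For the first, the plan is to bound $d_{f^i(X)}(\mu_0,\mu)$ below by $M_2$ and apply \lemref{lemLargeLink}. Since subsurface projection is $\mcg$--equivariant, $d_{f^i(X)}(f^i\mu_0,f^i\mu)=d_X(\mu_0,\mu)$; inserting the markings $f^i\mu_0$ and $f^i\mu$ and using the coarse triangle inequality for $d_Y(\cdot,\cdot)$ (legitimate because each of these markings projects nontrivially to every domain) gives $d_{f^i(X)}(\mu_0,\mu)\ge d_X(\mu_0,\mu)-d_{f^i(X)}(\mu_0,f^i\mu_0)-d_{f^i(X)}(\mu,f^i\mu)$. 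Expanding the last two terms along the orbit, $d_{f^i(X)}(\mu_0,f^i\mu_0)\le\sum_{a=1}^{i}d_{f^{a}(X)}(\mu_0,f\mu_0)\le N D$ with $D:=\max_{0\le j\le L_X}d_{f^j(X)}(\mu_0,f\mu_0)$, and $d_{f^i(X)}(\mu,f^i\mu)\le iR\le NR$ since $\mu\in\wfix_R(f)$ and $L_X<\ord(f)\le N$ (\lemref{lemOrderBound}). Substituting the defining inequality \eqref{eqBD2} of an $R$--bad domain leaves $d_{f^i(X)}(\mu_0,\mu)>\Theta\ge M_2$, as needed; already here the coefficients $2N$, $N$ and the slack $\Theta=6M_1+4\delta$ of \defref{defBadDomains} do their work.

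For disjointness I would first note that the $f^i(X)$, $0\le i\le L_X$, are distinct (because $f^{L_X+1}$ is the first return of $f$ to $X$) and of equal complexity, so no two are nested; it therefore suffices to exclude interlocking. Suppose, toward a contradiction, that two of them interlock; conjugating by a power of $f$ I may assume $X$ and $f^m(X)$ interlock for some $1\le m\le L_X$. By the first part both support geodesics in $H:=H(\mu_0,\mu)$, so by \thmref{thmTimeOrder} they are time--ordered; say $X<_t f^m(X)$ (the case $f^m(X)<_t X$ reduces to this after replacing $m$ by $L_X+1-m$). Consider the $f^m$--orbit $Z_i:=f^{im}(X)$: it is a cycle $Z_0,\dots,Z_{q-1}$ with $Z_q=Z_0$ and $q\ge 2$, each $Z_i$ is one of the $f^j(X)$ — hence, by the computation above, a large link with $d_{Z_i}(\mu_0,\mu)>(N+1)D+R+\Theta$ — and consecutive $Z_i,Z_{i+1}$ interlock. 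The mechanism is that for a pair of interlocking large links the relation $A<_t B$ in $H$ is detected by the \emph{single} inequality $d_A(\partial B,\mu)\le M_1$: one direction is \lemref{lemTimeOrder}, and the converse holds because the other possibility, $B<_t A$, would put $\pi_A(\partial B)$ within $M_1$ of both $\mu_0$ and $\mu$, forcing $d_A(\mu_0,\mu)\le 2M_1+2$, against largeness. Transporting this by $f^m$, which carries $H$ to $H(f^m\mu_0,f^m\mu)$ and time order to time order, and using $d_Y(\mu,f^m\mu)\le NR$ for every $Y$ (because $\mu\in\wfix_R(f)$), the estimate $d_{Z_i}(\partial Z_{i+1},\mu)\le M_1$ yields $d_{Z_{i+1}}(\partial Z_{i+2},\mu)\le M_1+NR$, which together with the lower bound on $d_{Z_{i+1}}(\mu_0,\mu)$ forces $Z_{i+1}<_t Z_{i+2}$ in $H$. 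Iterating around the cycle produces $Z_0<_t Z_1<_t\cdots<_t Z_{q-1}<_t Z_q=Z_0$, contradicting that $<_t$ is a strict partial order (\thmref{thmTimeOrder}); hence no two of the $f^i(X)$ interlock, and they are mutually disjoint.

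I expect the crux to be the constant bookkeeping in this transport step: pushing a projection estimate forward by $f^m$ replaces $\mu$ by $f^m\mu$ at a cost of order $R$, and separately the initial comparison of $d_{Z_i}(\mu_0,\mu)$ with $d_X(\mu_0,\mu)$ costs multiples of $D$ and $R$, so one must check that every $Z_i$ in the cycle remains a large \emph{enough} link (by a margin exceeding these losses) for \lemref{lemTimeOrder} and its converse to keep applying. This is precisely what the orbit--maximum $D$ with coefficient $2N$, the term $NR$, and the additive slack $\Theta=6M_1+4\delta$ in \eqref{eqBD2} are calibrated to supply — with the $4\delta$ entering through the $\delta$--hyperbolicity of the curve complexes behind the fellow--traveling estimates used for \lemref{lemTimeOrder}. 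Keeping this bookkeeping consistent, rather than any conceptual leap, is the heart of the proof.
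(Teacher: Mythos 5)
Your first assertion---that each $f^i(X)$ is a large link for $(\mu_0,\mu)$ and hence supports a geodesic of $H(\mu_0,\mu)$---is proved exactly as in the paper: the same equivariance-plus-telescoping estimate, with the coefficients $2N$, $NR$ and the slack $\Theta$ in \eqref{eqBD2} absorbing the $ND+NR$ loss. Your disjointness argument, however, departs from the paper's, and it has a genuine quantitative gap precisely at the transport step you flag as the crux. Write $D:=\max_{0\le j\le L_X}d_{f^j(X)}(\mu_0,f\mu_0)$. Pushing $d_{Z_i}(\partial Z_{i+1},\mu)\le M_1$ forward by $f^m$ costs $d_{Z_{i+1}}(f^m\mu,\mu)\le mR$, so to rule out $Z_{i+2}<_t Z_{i+1}$ your detection criterion needs $d_{Z_{i+1}}(\mu_0,\mu)>2M_1+mR+O(1)$. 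But the lower bound your first part guarantees for a general orbit element $f^{-n}(X)$ is only $(2N-n)D+(N-n)R+\Theta$, and $N-n$ can be as small as $1$ (take $\ord(f)=N$ and $n=L_X=N-1$), whereas $m$ can be as large as $L_X$. Thus for $m\ge 2$ and $R$ large relative to $D$, $M_1$ and $\delta$---exactly the regime arising when this lemma is iterated in \secref{secIteration}, where $R$ grows at each stage while $D$ does not---the required inequality $(N+1)D+R+\Theta>2M_1+mR+2$ fails, and the transported estimate cannot distinguish the two time-orderings. The constants of \defref{defBadDomains} are not calibrated for your cycle argument; they are calibrated for a contradiction landed on $X$ itself.

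That is what the paper does. Assuming $X$ and $f^{-n}(X)$ interlock, hence are time-ordered, it picks a hierarchal marking $\nu$ witnessing the time order as in \lemref{lemTimeOrder1} (e.g.\ $d_X(\nu,\mu)\le M_1$ and $d_{f^{-n}(X)}(\mu_0,\nu)\le M_1$ when $X<_t f^{-n}(X)$), shows $d_X(f^n\nu,\mu)>2M_1$ so that \corref{corTimeOrder2} gives $d_{f^{-n}(X)}(\mu_0,f^n\nu)\le 2M_1$, and iterates with $f^{2n}\nu, f^{3n}\nu,\dots$ until some power $f^{in}$ is the identity. This bounds $d_X(\mu_0,\nu)$ and hence $d_X(\mu_0,\mu)$ by $N D+3M_1$ (resp.\ $3M_1+nR$ in the other time-ordering), contradicting the \emph{full-strength} inequality \eqref{eqBD2} for $X$, whose slack $NR+\Theta$ comfortably exceeds the $nR$ incurred. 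To salvage your route you would either need to strengthen the additive term in \eqref{eqBD2} so that every orbit element retains slack at least $NR+\Theta$ after the telescoping (e.g.\ replace $NR$ by $2NR$ as is done with the coefficient of $D$), or replace the transport by the either/or dichotomy of \corref{corTimeOrder2} applied to a single marking, as the paper does.
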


  \begin{proof}
   
    Note that $f^{-n}(X) = f^{L_X+1-n}(X)$. We will prove, by induction on
    $n$, that \[ X, f^{-1}(X), \ldots, f^{-n}(X) = f^{L_X+1-n}(X)\] satisfy
    the conclusion of the lemma for $n = 0, \ldots, L_X$.  Our assumption
    is that $X \in \Omega(\mu,R)$ and $X \ne \s$. In particular, this means
    $d_X(\mu_B, \mu) > \Theta > M_2$, so $X$ is a domain for a geodesic in
    $H(\mu_B, \mu)$. This concludes the base case. 
    
    Let's now assume 
    \[ 
       X, f^{-1}(X), \ldots, f^{-n+1}(X) 
    \] 
    are all domains for geodesics in $H(\mu_B, \mu)$ and are all pairwise
    disjoint. We will show $f^{-n}(X)$ supports a geodesic in $H(\mu_B,
    \mu)$. Recursively, we have
    \begin{align*}\label{eqBD3}
      d_{f^{-n}(X)}&(\mu_B, \mu) \\ 
      & = d_{f^{-n+1}(X)}(f\mu_B, f\mu) \\
      & \ge d_{f^{-n+1}(X)}(\mu_B,\mu) - d_{f^{-n+1}(X)}(\mu_B, f\mu_B) -
      d_{f^{-n+1}(X)}(\mu,f\mu) \\ 
      & > d_{f^{-n+2}(X)}(f\mu_B,f\mu) - d_{f^{-n+1}(X)}(\mu_B, f\mu_B) - R \\
      & \hspace{1cm} \vdots \\
      & \ge d_X(\mu_B, \mu) - \left( \sum_{i=1}^n d_{f^{-n+i}(X)}(\mu_B,
      f\mu_B) \right) - nR \\
      & \ge d_X(\mu_B, \mu) - n \Big( \max_{1 \le i \le n} \big\{
      d_{f^{-n+i}(X)}(\mu_B, f\mu_B) \big\} \Big) - nR \\
      \text{By } \eqref{eqBD2} \quad
      & > (N-n) \Big( \max_{1 \le i \le L_X+1} \big\{ d_{f^{-n+i}(X)}(\mu_B,
      f\mu_B) \big\} \Big) + (N-n)R + \Theta 
    \end{align*}
    Since $N > L_X \ge n$, we have in particular
    \[ 
       d_{f^{-n}(X)}(\mu_B, \mu) > \Theta > M_2. 
    \] 
    Therefore, $f^{-n}(X)$ supports a geodesic in $H(\mu_B,\mu)$. 

   Now let's prove $f^{-n}(X)$ is disjoint from each $X, \ldots,
   f^{-n+1}(X)$. Observe that $f^{-n}(X)$ and $f^{-i}(X)$ are disjoint if
   and only if $f^{-n+i}(X)$ and $X$ are disjoint. Hence it is enough to
   show $f^{-n}(X)$ and $X$ are disjoint. By way of contradiction, let's
   assume $X$ and $f^{-n}(X)$ are not disjoint. The two domains have the
   same complexity so they must interlock. They both support geodesics in
   $H(\mu_B,\mu)$ so, by \thmref{thmTimeOrder}, they are time-ordered. We
   have two cases.

    The first case is $X <_t f^{-n}(X)$. As in \lemref{lemTimeOrder1}, we
    may choose a hierarchal marking $\nu$ for $H(\mu_B,\mu)$ such that 
    \begin{align} \label{eqTimed}
       d_X(\nu, \mu) \le M_1 \quad \text{and} \quad d_{f^{-n}(X)}(\mu_B,
       \nu) \le M_1, 
    \end{align}
    By the triangle inequality, 
    \begin{align} \label{eqBD3} 
       d_X(\mu_B, f^n\nu) 
       & \le d_X(\mu_B, f^n\mu_B) + d_X(f^n\mu_B, f^n\nu) \nonumber \\
       & \le \left( \sum_{j=0}^{n-1} d_X(f^j\mu_B, f^{j+1}\mu_B) \right) +
       d_{f^{-n}(X)}(\mu_B, \nu) \nonumber \\ 
       & = \left( \sum_{j=0}^{n-1} d_{f^{-j}(X)}(\mu_B, f\mu_B) \right) +
       d_{f^{-n}(X)}(\mu_B, \nu) \nonumber \\ 
       & \le n \left( \max_{0 \le j \le n-1} \left\{\, d_{f^{-j}(X)}
       (\mu_B,f\mu_B) \,\right\} \right) + M_1.
    \end{align} 
    Using the triangle inequality again, along with \eqref{eqBD2} and
    \eqref{eqBD3}, we have 
    \[ 
      d_X(f^n\nu, \mu) \ge d_X(\mu_B,\mu) - d_X(\mu_B, f^n\nu) > 2M_1. 
    \]
    Therefore, by \lemref{lemTimeOrder2}, 
    \[ 
       d_{f^{-n}(X)}(\mu_B, f^n\nu) \le 2M_1. 
    \] 
    Now we consider $d_X(\mu_B, f^{in}\mu)$. By iterating the argument we
    obtain inductively, for every $i \ge 0$,
    \begin{align} \label{eqBD4}
       d_X(\mu_B, f^{(i+1)n} \nu) 
       & \le  d_X(\mu_B, f^n\mu_B) + d_X(f^n\mu_B, f^{(i+1)n}\nu) \nonumber \\ 
       & \le n \Big( \max_{0 \le j \le n-1} \big\{\, d_{f^{-j}(X)}(\mu_B,
       f\mu_B) \,\big\} \Big) + d_{f^{-n}(X)}(\mu_B, f^{in}\nu) \nonumber \\ 
       & \le n \Big( \max_{0 \le j \le n-1} \big \{\, d_{f^{-j}(X)}
       (\mu_B,f\mu_B) \,\big\} \Big) + 2M_1.
    \end{align} 
    Since there is some $i$ for which $f^{in}$ is the identity map, 
    inequality \eqref{eqBD4} must also hold for $d_X(\mu_B, \nu)$. With
    this fact coupled with the first half of \eqref{eqTimed}, we derive the
    following violation to $X \in \Omega(\mu,R)$: 
    \begin{align*} 
       d_X(\mu_B, \mu) & \le d_X(\mu_B, \nu) + d_X(\nu, \mu) \\ 
       & \le n \Big( \max_{0 \le j \le L_X} \big\{ d_{f^{-j}(X)}(\mu_B,
       f\mu_B) \big\} \Big) + 2M_1 + d_X(\nu,\mu) \\
       & \le N \Big( \max_{0 \le j \le L_X} \big\{ d_{f^{-j}(X)}(\mu_B,
       f\mu_B) \big\} \Big) + 3M_1  
    \end{align*}
    Thus, it is not possible for $X <_t f^{-n}(X)$.
    
    To eliminate the second case $f^{-n}(X) <_t X$, we argue similarly. Now
    choose a marking $\nu$ such that 
    \begin{align} \label{eqOtherTimed}
       d_X(\mu_B, \nu) \le M_1 \quad \text{and} \quad d_{f^{-n}(X)}(\nu,
       \mu) \le M_1. 
    \end{align}
    Then, using the fact that $\mu \in \wfix_R(f)$
    (in the last step below), we have 
    \begin{align*} 
       d_X(f^n\nu, \mu) & \le d_X(f^n\nu, f^n\mu) + d_X(\mu, f^n\mu) \\
       & \le d_{f^{-n}(X)}(\nu, \mu) + \sum_{i=0}^{n-1}
       d_{f^{-i}(X)}(\mu, f\mu) \\ 
       & \le M_1 + nR. 
    \end{align*}
    Thus, \[ d_X(\mu_B, f^n\nu) \ge d_X(\mu_B, \mu) - d_X(f^n\nu, \mu) > 2M_1.
    \] By \lemref{lemTimeOrder2}, 
    \[
       d_{f^{-n}(X)}(f^n \nu, \mu) \le 2M_1. 
    \] 
    As above, we iterate the argument on taking powers of $f^n$. For all $i
    \ge 0$, we have 
    \begin{align*}
       d_X(f^{(i+1)n}\nu, \mu) & \le d_X(f^{(i+1)n}\nu, f^n\mu) + d_X(\mu,
       f^n\mu) \\ 
       & \le d_{f^{-n}(X)}(f^{in}\nu, \mu) + nR \\ 
       & \le 2M_1 + nR.
    \end{align*}
    This eventually leads to the contradiction 
    \begin{align*}
       d_X(\mu_B, \mu) 
       & \le d_X(\mu_B, \nu) + d_X(\nu, \mu) \\
       & \le M_1 + 2M_1 + nR \\
       & \le 3M_1 + nR.
    \end{align*}
    We conclude $X$ and $f^{-n}(X)$ must be disjoint.
  \end{proof}

  \subsection{Second technical lemma}

  In the following, we prove another technical result, which has a similar
  conclusion as \lemref{lemBadDomains}, but it is based on different
  assumptions. Its purpose is for the situation when the main surface \s is
  a bad domain for an $R$--fixed point $\mu$ of a finite order mapping
  class $f$ (see \propref{propBaseStep1}). In this situation, we need to
  cook up a set of base curves for a new fixed point of $f$ which is closer
  to the base marking $\mu_B$ in \cc. \lemref{lemFiniteOrder} starts this
  process by finding a subsurface whose orbit under $f$ are all pairwise
  disjoint. Furthermore, the boundary curves of these subsurfaces form a
  multicurve which is closer to $\mu_B$ in \cc than the base curves of
  $\mu$. 
  
  The proof of \lemref{lemFiniteOrder} will be similar to that of
  \lemref{lemBadDomains}. We will provide the details of the first half of
  the proof to illustrate the differences and omit the second half.

  In the following, let $R$ be any constant and let $\mu \in \wfix_R(f)$
  for a finite order mapping class $f$. Let $H(\mu_B,f\mu_B)$ and
  $H(\mu_B,\mu)$ be hierarchies. Let $[v_B, f(v_B)]$ and $[v_B, v]$ be the
  corresponding main geodesics in \cc. For any domain $Y \subset \s$, we
  adopt the notation $[u,v]_Y$ to mean the line segment $\big[
  \pi_Y(u),\pi_Y(v) \big]$ in $\mathcal{C}(Y)$.  For any two sets $A , B
  \subset \cc$, let $\displaystyle \dist_\s(A,B) = \min_{\stackrel{u\in
  A}{\scriptscriptstyle v\in B}} d_S(u,v)$.

  \begin{lemma}\label{lemFiniteOrder}
   
    There exists a constant $\Delta$, depending only on $R$, such that the
    following holds. Suppose $b$ is a vertex on $[v_B, v]$ with the
    property that \[ \dist_\s \Big( \big[ b, f(b) \big], \big[ v_B, f(v_B)
    \big] \Big) \ge 4 .\] Let $\mu'$ be a separating marking at $b$. Then
    whenever a subsurface $Y \subset \s$ has the property that \[ \dist_\s
    \Big( \partial Y, \big[ b,f(b) \big] \Big) \le 1 \quad \text{and} \quad
    d_Y(\mu',f\mu') > \Delta, \] then \[ Y, f(Y), \ldots, f^{L_Y}(Y)\] are
    all domains for geodesics in $H(\mu_B,\mu)$ and are all mutually
    disjoint.
    
  \end{lemma}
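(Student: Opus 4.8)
The plan is to mimic the proof of \lemref{lemBadDomains}, exploiting the separating marking $\mu'$ as a substitute for the hypothesis that $\mu$ is symmetric \emph{in the complement of the reducing data}. First I would observe that by the separating property (\lemref{lemSeparatingMarking}), for every proper domain $Z \subset \s$ we have either $d_Z(I(H),\mu') \le M_1$ or $d_Z(\mu', T(H)) \le M_1$, where $H = H(\mu_0,\mu)$; since $I(H)=\mu_0$ and $T(H)=\mu$, this means $\mu'$ is within $M_1$ of $\mu_0$ or within $M_1$ of $\mu$ in each curve complex $\mathcal C(Z)$. I would use this to show that for the domain $Y$ of the hypothesis, and for its translates $f^i(Y)$, the large projections $d_{f^i(Y)}(\mu_0,\mu)$ are controlled. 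The key point is that the condition $\dist_\s(\partial Y,[b,f(b)]) \le 1$ together with $\dist_\s([b,f(b)],[v_0,f(v_0)]) \ge 4$ forces $\partial Y$ (and, after applying $f$, each $\partial f^i(Y)$) to be far from the main geodesic $[v_0,f(v_0)]$ of $H(\mu_0,f\mu_0)$; by the Bounded Geodesic Image theorem (\thmref{thmBGI}), this means $d_{f^i(Y)}(\mu_0,f\mu_0) \le M_0$ for each $i$. So all the ``$d_{f^i(X)}(\mu_0,f\mu_0)$'' quantities that appeared in \eqref{eqBD2} are now uniformly bounded by $M_0$.

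Next I would run the same recursive estimate as in the proof of \lemref{lemBadDomains}. Write $L = L_Y$ and $f^{-i}(Y) = f^{L+1-i}(Y)$, and proceed by induction on $i = 0, 1, \ldots, L$ to show each $f^{-i}(Y)$ supports a geodesic in $H(\mu_0,\mu)$. For the base case, I would show $d_Y(\mu_0,\mu) > M_2$: from $d_Y(\mu',f\mu') > \Delta$ and the separating dichotomy applied to both $Y$ and (via $f$) to $f^{-1}(Y)$, together with $\mu \in \wfix_R(f)$, one gets a lower bound on $d_Y(\mu_0,\mu)$ of the form $\Delta - \mathrm{const}(R,M_1)$, which exceeds $M_2$ once $\Delta$ is chosen large enough depending on $R$. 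This is where the constant $\Delta$ gets defined. Then for the inductive step I would use the same telescoping computation as in the displayed $\mathfrak{align*}$ of \lemref{lemBadDomains}'s proof: $d_{f^{-n}(Y)}(\mu_0,\mu) = d_{f^{-n+1}(Y)}(f\mu_0,f\mu)$, and each time we peel off a copy of $f$ we lose a term $d_{f^{-n+i}(Y)}(\mu_0,f\mu_0) \le M_0$ (now bounded by BGI, rather than by being a bad domain) plus a term $d(\mu,f\mu) \le R$; after $n \le L < N$ steps the accumulated loss is at most $N M_0 + NR$, leaving $d_{f^{-n}(Y)}(\mu_0,\mu) > \Delta - N(M_0+R) - \mathrm{const} > M_2$ provided $\Delta$ is large enough. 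Hence $f^{-n}(Y)$ supports a geodesic in $H(\mu_0,\mu)$.

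For the disjointness half of the conclusion I would repeat verbatim the time-order argument of \lemref{lemBadDomains}: reduce ``$f^{-n}(Y)$ disjoint from each $f^{-i}(Y)$'' to ``$f^{-n}(Y)$ disjoint from $Y$''; suppose not, so they interlock and (both supporting geodesics) are time-ordered by \thmref{thmTimeOrder}; in the case $Y <_t f^{-n}(Y)$ pick a hierarchal marking $\nu$ with $d_Y(\nu,\mu) \le M_1$, $d_{f^{-n}(Y)}(\mu_0,\nu) \le M_1$, push forward by $f^n$, apply \corref{corTimeOrder2}, iterate over powers of $f^n$ using $d(\mu,f\mu) \le R$ at each stage, and derive $d_Y(\mu_0,\mu) \le N M_1 + \mathrm{const}(R)$, contradicting (after increasing $\Delta$ once more) the lower bound just established; the case $f^{-n}(Y) <_t Y$ is symmetric. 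This is exactly the second half of the proof of \lemref{lemBadDomains}, which the statement permits me to omit.

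The main obstacle, and the one genuinely new ingredient compared to \lemref{lemBadDomains}, is the first step: replacing the role of ``$X$ is a bad domain'' (which directly gave the large lower bound \eqref{eqBD2} with the max of $d_{f^i(X)}(\mu_0,f\mu_0)$ already subtracted off) by the combination of the separating marking and the distance hypotheses on $[b,f(b)]$. I need to verify carefully that $\dist_\s(\partial f^i(Y),[v_0,f(v_0)]) \ge 2$ for all $i$ so that BGI applies — this uses that $f$ acts as an isometry on $\cc$ sending $[v_0,v]$ to $[f(v_0),f(v)]$ and fixing $[v_0,f(v_0)]$ coarsely, plus the $4$ versus $1$ gap in the hypotheses, with a little care about the additive slack from $f$ not exactly preserving geodesics and from the difference between $[v_0,f(v_0)]$ as a subsegment and the full main geodesic of $H(\mu_0,f\mu_0)$. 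Once that boundedness is in hand, the rest is a transcription of the earlier argument with $M_0$ playing the role the bad-domain bound played before, and $\Delta$ absorbing the finitely many additive constants.
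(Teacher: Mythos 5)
Your overall architecture is the paper's: a base case that pins down $d_Y(\mu_0,\mu)$ via the separating dichotomy for $\mu'$ and for $f\mu'$ (the latter being exactly your ``dichotomy applied via $f$ to $f^{-1}(Y)$'') together with a BGI bound on $d_Y(\mu_0,f\mu_0)$; an inductive telescoping step; and a verbatim repeat of the time-order/disjointness argument. The base case and the disjointness half are fine as sketched. But there is one concrete misstep, precisely at the point you flag as the crux: your justification for $d_{f^i(Y)}(\mu_0,f\mu_0)\le M_0$ for $i\ge 1$. You propose to get $\dist_\s\big(\partial f^i(Y),[v_0,f(v_0)]\big)\ge 2$ by ``applying $f$'' and using that $f$ ``fixes $[v_0,f(v_0)]$ coarsely.'' That premise is false in general: $f$ carries $[v_0,f(v_0)]$ to $[f(v_0),f^2(v_0)]$, which shares only the endpoint $f(v_0)$, and since $d_\s(v_0,f(v_0))$ may be arbitrarily large (this is exactly the situation the lemma is designed for, cf.\ the hypothesis $\s\in\Omega(\mu_I,R_I)$ in \propref{propBaseStep1}), there is no coarse invariance of that segment. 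So the upfront claim that BGI controls all the translates is not established by your argument.

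The paper's mechanism is different and is the reason the statement is proved by an interleaved induction rather than in two separate passes: once $Y, f^{-1}(Y),\dots,f^{-n+1}(Y)$ are known to be \emph{mutually disjoint} (the induction hypothesis at stage $n$), each $\partial f^{-i}(Y)$ is disjoint from $\partial Y$, hence within distance $1$ of it in \cc; since the hypotheses give $\dist_\s\big(\partial Y,[v_0,f(v_0)]\big)\ge 3$, each translate satisfies $\dist_\s\big(\partial f^{-i}(Y),[v_0,f(v_0)]\big)\ge 2$ and only then does \thmref{thmBGI} yield $d_{f^{-i}(Y)}(\mu_0,f\mu_0)\le M_0$. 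In other words, the BGI bounds on the translates are an \emph{output} of the disjointness already established at earlier stages, not an input available at the outset. Your proof becomes correct if you replace the ``coarse invariance'' justification by this disjointness argument; the telescoping estimate and the time-order contradiction then go through exactly as you describe, with $\Delta$ absorbing the constants as in the paper's choice $\Delta=(2N+1)M_0+(2N+1)R+10M_1$.
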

  
  \begin{proof}

    We claim the constant \[ \Delta = (2N+1) M_0 + (2N+1)R + 10M_1 \]
    works. Let $Y \subset \s$ satisfy the criteria of the lemma. As in
    \lemref{lemBadDomains}, we will prove by induction on $n$ that \[ Y,
    f^{-1}(Y), \ldots, f^{-n}(Y)=f^{L_Y+1-n}(Y)\] satisfy the lemma for
    $n=0 \ldots, L_Y$. 

    Let's first show $Y$ supports a geodesic in $H(\mu_B,\mu)$. We will be
    considering $H(f\mu_B,f\mu)$ in parallel. Note that $f\mu'$ will be a
    separating marking at $f(b)$ in $H(f\mu_B,f\mu)$. Consider
    the following pair of quadrilaterals in $\mathcal{C}(Y)$:
    \[ 
       Q_1 = 
       \big[ \mu_B,  f\mu_B  \big]_Y \cup 
       \big[ f\mu_B, f\mu'   \big]_Y \cup 
       \big[ \mu_B,  \mu'    \big]_Y \cup
       \big[ \mu',   f\mu'   \big]_Y,
    \] 
    and
    \[ 
       Q_2 = 
       \big[ \mu',  f\mu' \big]_Y \cup 
       \big[ \mu',  \mu   \big]_Y \cup 
       \big[ \mu,   f\mu  \big]_Y \cup 
       \big[ f\mu', f\mu  \big]_Y.
    \] 
    By our assumption, $d_Y(\mu',f\mu') > \Delta$. By the triangle
    inequality, at least one of three other segments of $Q_1$ is long: 
    \[
       \big[ \mu_B, \mu'   \big]_Y,   \quad 
       \big[ \mu_B, f\mu_B \big]_Y, \quad 
       \big[ f\mu_B, f\mu' \big]_Y. 
    \]
    Similarly, at least one of the following segments of $Q_2$ is long: 
    \[
       \big[ \mu',  \mu   \big]_Y, \quad 
       \big[ \mu,   f\mu  \big]_Y, \quad 
       \big[ f\mu', f\mu  \big]_Y. 
    \]
    Since $d_Z(\mu,f\mu) \le R$ for all $Z \subseteq \s$, in $Q_2$ the
    situation reduces to either 
    \begin{equation}\label{eqFO1}
      d_Y(\mu', \mu) > \frac{\Delta - R}{2} > NM_0 + NR + 5M_1,
    \end{equation}
    or 
    \begin{equation}\label{eqFO2}
      d_Y(f\mu', f\mu) > NM_0 + NR + 5M_1.  
    \end{equation}
    If \eqref{eqFO1} holds, then by the fact that $\mu'$ is a separating
    marking at $b$ (\lemref{lemSeparatingMarking}), 
    \[ d_Y(\mu_B, \mu') \le M_1. \] 
    Applying the triangle inequality yields
    \begin{align*}
      d_Y(\mu_B, \mu) 
      & \ge d_Y(\mu', \mu) - d_Y(\mu_B, \mu') \\
      & > (NM_0 + NR + 5M_1) - M_1.
    \end{align*}
    Therefore \eqref{eqFO1} implies $Y$ supports a geodesic in $H$. So we
    may assume \eqref{eqFO2} holds.
   
    In $Q_1$, the assumption on $\partial Y$ forces $\dist_\s \Big( \partial
    Y, \big[ \mu_B, f\mu_B \big] \Big) > 1$. In other words, every vertex
    in $\big[ \mu_B, f\mu_B \big]$ crosses $Y$. \thmref{thmBGI} applies and
    $d_Y(\mu_B, f\mu_B) \le M_0$. The situation is reduced to either
    \begin{equation}\label{eqFO3}
      d_Y(\mu_B, \mu') > \frac{\Delta - M_0}{2} > NM_0 + NR + 5M_1
    \end{equation}
    or
    \begin{equation}\label{eqFO4}
      d_Y(f\mu_B, f\mu') > NM_0 + NR + 5M_1.
    \end{equation}
    It is not possible for \eqref{eqFO2} and \eqref{eqFO4} to occur
    simultaneously, as that would mean both \[ d_Y(f \mu_B, f \mu') > M_1
    \quad \text{and} \quad d_Y(f\mu', f\mu) > M_1, \] violating $f\mu'$ a
    separating marking. So \eqref{eqFO3} must hold. As above, we must then
    have
    \begin{align*}
      d_Y(\mu_B, \mu) 
      & \ge d_Y(\mu_B, \mu') - d_Y(\mu',\mu)\\ 
      & > (NM_0 + NR + 5M_1) - M_1.  
    \end{align*}
    Therefore, in all cases, $Y$ must support a geodesic in $H(\mu_B,\mu)$.
    See \figref{figSubsurfaceY} for a schematic picture of $Q_1$ and $Q_2$.
    Note that the conclusion of the base case always resulted in 
    \begin{equation}\label{eqFO5}
      d_Y(\mu_B, \mu) > NM_0 + NR + 4M_1. 
    \end{equation}
    

    \begin{figure}
      
      \begin{center}

        \begin{tikzpicture}[yscale=1.6]

          \draw[thick, dotted, rounded corners=10pt]
          (1.6,0.6) -- (11.4,0.4);
          
          \draw[thick, rounded corners=24pt]
          (0,0) -- (0.6,0.4) -- (12.4,0.4) -- (13,0.3);
          \draw[thick, rounded corners=24pt]
          (0,1) -- (0.6,0.6) -- (12.4,0.6) -- (13,0.7);

          \draw[thick, rounded corners=11pt]
          (0,0) -- (0.8,0.5) -- (0,1);
          \draw[thick, rounded corners=11pt]
          (13,0.3) -- (12.5,0.5) -- (13,0.7);

          \filldraw[fill=green, draw=black, thick, yscale=0.625] 
          (1.6,0.96)  circle (2.5pt);
          \filldraw[fill=green, draw=black, thick, yscale=0.625] 
          (11.4,0.64) circle (2.5pt);

          \draw (0,0)      node[anchor=north]      {$\pi_Y(\mu_B)$};
          \draw (13,0.3)     node[anchor=north]      {$\pi_Y(\mu)$};
          \draw (11.4,0.3) node[anchor=north east] {$\pi_Y(\mu')$};
          \draw (0,1)      node[anchor=south]      {$\pi_Y(f\mu_B)$};
          \draw (13,0.7)     node[anchor=south]      {$\pi_Y(f\mu)$};
          \draw (1.6,0.7)  node[anchor=south west] {$\pi_Y(f\mu')$};

        \end{tikzpicture}

      \end{center}
       
      \caption{The quadrilaterals $Q_1$ and $Q_2$ in $\mathcal{C}(Y)$}
      
      \label{figSubsurfaceY} 
    
    \end{figure}

    By induction, \[ Y, f^{-1}(Y), \ldots, f^{-n+1}(Y) \] are all domains
    for geodesics in $H(\mu_B,\mu)$ and are all mutually disjoint. Let's
    now prove $f^{-n}(Y)$ supports a geodesic in $H(\mu_B, \mu)$. Since
    \[
       \dist_\s \Big( \big[ v_B,f(v_B) \big], \big[ b, f(b) \big] \Big) \ge
       4 \quad 
       \text{and} 
       \quad \dist_\s \Big( \partial Y, \big[ b,f(b) \big] \Big) \le 1, 
    \] 
    the disjointness condition will imply
    \[ 
      \dist_\s \Big( \big[ v_B, f(v_B) \big], \partial f^{-i}(Y) \Big) \ge
      2, 
    \] 
    for all $i = 0, \ldots, n-1$. By \thmref{thmBGI},  
    \begin{equation}\label{eqFO6}
      d_{f^{-i}(Y)}(\mu_B, f\mu_B) \le M_0.
    \end{equation}
    Coupling this fact with \eqref{eqFO5} (in the last step below), we have 
    \begin{align*}
      d_{f^{-n}(Y)} &(\mu_B, \mu) \\ 
      & = d_{f^{-n+1}(Y)}(f\mu_B, f\mu) \\
      & \ge d_{f^{-n+1}(Y)}(\mu_B, \mu) - d_{f^{-n+1}(Y)}(\mu_B, f\mu_B) -
      d_{f^{-n+1}(Y)}(\mu, f\mu) \\ 
      & \ge d_{f^{-n+1}(Y)}(\mu_B, \mu) - M_0 - R \\
      & \hspace{1cm} \vdots \\
      & \ge d_Y(\mu_B, \mu) - nM_0 - nR \\
      & > (N-n)M_0 + (N-n)R + 4M_1. 
    \end{align*}
    Since $N > L_Y \ge n$, the above in particular implies 
    \[
       d_{f^{-n}(Y)}(\mu_B,\mu) > M_1 \ge M_2. 
    \] 
    So $f^{-n}(Y)$ supports a geodesic in $H(\mu_B,\mu)$. 
   
    We now want to show $Y, \ldots, f^{-n+1}(Y), f^{-n}(Y)$ are all
    pairwise disjoint. Using the action of $f$ and the assumption that $Y,
    \ldots, f^{-n+1}(Y)$ are pairwise disjoint, we see that $f^{-n}(Y)$ is
    disjoint with each $Y, \ldots, f^{-n+1}(Y)$ if and only if $f^{-n}(Y)$
    and $Y$ are disjoint. If $Y$ and $f^{-n}(Y)$ are not disjoint, then
    they are time-ordered in $H(\mu_B,\mu)$. The two different cases of
    time-ordering of $Y$ and $f^{-n}(Y)$ will both lead to a contradiction.
    The argument is very similar to the one given in
    \lemref{lemFiniteOrder}. We will quickly give the argument in the case
    that $Y <_t f^{-n}(Y)$ and omit the case the argument in the second
    case.

    Suppose $Y <_t f^{-n}(Y)$. Let $\nu$ be a hierarchy marking $H(\mu_B,
    \mu)$ such that 
    \begin{equation}\label{eqHierMarking2} 
       d_Y(\nu, \mu) \le M_1 \quad \text{and} \quad
       d_{f^{-n}(Y)}(\mu_B,\nu) \le M_1, 
    \end{equation}
    as in \lemref{lemTimeOrder2}. Then 
    \begin{align*}
      d_Y(\mu_B,f^{n}\nu) 
      & \le d_Y(\mu_B, f^n \mu_B) + d_Y(f^n \mu_B,f^n\nu) \\ 
      & \le \left( \sum_{j=0}^{n-1} d_Y(f^j\mu_B, f^{j+1} \mu_B) \right) +
      d_{f^{-n}(Y)}(\mu_B, \nu) \\ 
      & \le \left( \sum_{j=0}^{n-1}d_{f^{-j}(Y)}(\mu_B,f\mu_B) \right) + M_1 \\
      & \le nM_0 + M_1.  
    \end{align*}
    Using \eqref{eqFO5} and the triangle inequality, we have 
    \[
       d_Y(f^n\nu,\mu) \ge d_Y(\mu_B,\mu) - d_Y(\mu_B, f^n\nu) > 2M_1. 
    \]
    Therefore, by \lemref{lemTimeOrder2}, 
    \[ d_{f^{-n}(Y)}(\mu_B, f^{n}\nu) \le 2M_1.  \]
    By considering powers of $f^n\nu$ inductively, we have
    \begin{align*} 
      d_Y(\mu_B,f^{(i+1)n}\nu) 
      & \le d_Y(\mu_B, f^n\mu_B) + d_Y(f^n\mu_B, f^{(i+1)n}\nu) \\ 
      & \le nM_0 + d_{f^{-n}(Y)}(\mu_B,f^{in}\nu) \\ 
      & \le nM_0 + 2M_1 
    \end{align*}
    This is true for every $i \ge 0$. Since $N > L_Y \ge n$, using
    \eqref{eqHierMarking2}, we have
    \[ 
      d_Y(\mu_B, \mu) \le d_Y(\mu_B,\nu) + d_Y(\nu, \mu) \le nM_0 + 3M_1,
    \] 
    contradicting \eqref{eqFO5}. The case of $f^{-n}(Y) <_t Y$ will lead to
    a similar contradiction. This concludes the proof of the lemma.
  \end{proof}

\section{L.B.C.~property for finite order mapping classes}

  \label{secFinite}
  
  The heart of this section is to prove \thmref{introthmPeriodic} in the
  introduction, which is restated below. Let $R_1$ be the fixed constant of
  \lemref{lemFixedPoints} and let $\mu_B$ be the fixed base marking. 

  \begin{theorem} \label{thmFiniteOrder}
    
    There exists a constant $R \ge R_1$, depending only on $\mu_B$, such
    that any finite order $f \in \mcg$ has a marking $\mu \in \wfix_R(f)$
    with 
    \[
      d_{\mg}(\mu_B, \mu) \prec d_{\mg}(\mu_B, f\mu_B). 
    \] 

  \end{theorem}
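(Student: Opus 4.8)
The plan is to induct on the complexity $\xi(\mu,R)$ of the set $\Omega(\mu,R)$ of $R$--bad domains. We start from a marking $\mu\in\wfix_{R_1}(f)$, which exists by \lemref{lemFixedPoints}. The base of the induction is the case $\Omega(\mu,R)=\emptyset$: there, \defref{defBadDomains} gives $d_\s(\mu_0,\mu)\le d_\s(\mu_0,f\mu_0)+R$ and, for every proper domain $Y$, $d_Y(\mu_0,\mu)\le 2N\max_{0\le i\le L_Y}d_{f^i(Y)}(\mu_0,f\mu_0)+NR+\Theta$. Feeding these into the distance formula \thmref{thmDF} (with the threshold $L$ chosen large in terms of $\s,R$), using that each $f$--orbit $\{f^i(Y)\}$ has length $\le N$ by \lemref{lemOrderBound} and reindexing the resulting double sum, gives $d_M(\mu_0,\mu)\prec d_M(\mu_0,f\mu_0)$, which is the assertion. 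So it suffices to show: given $\mu\in\wfix_R(f)$ with $\Omega(\mu,R)\ne\emptyset$, one can produce $\mu'\in\wfix_{R'}(f)$ with $R'=R'(\s,R)\ge R$ and $\xi(\mu',R')<\xi(\mu,R)$. Iterating this at most $\xi(\s)+3$ times and taking $R$ to be the largest of the finitely many constants that occur then finishes the proof.

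For the inductive step, let $X$ be the maximal element of $\Omega(\mu,R)$ under $\vartriangleleft$, and set $Q=\max_{0\le i\le L_X}d_{f^i(X)}(\mu_0,f\mu_0)$. Suppose first $X\ne\s$. By \lemref{lemBadDomains} the domains $X,f(X),\dots,f^{L_X}(X)$ are mutually disjoint and each supports a geodesic of $H(\mu_0,\mu)$, so $c=\bigcup_i\partial f^i(X)$ is an $f$--invariant multicurve. Choose a marking $\nu$ on $X$ whose base sits on a tight geodesic $[\pi_X(\mu),\pi_X(\mu_0)]$ at distance $NR+NQ+\Theta$ from $\pi_X(\mu_0)$, selecting the transversal of $\nu$ and, recursively on complexity (as in the motivating example of \secref{secTwoLemmas}), the induced markings of $\nu$ on proper subsurfaces of $X$, so that $\nu$ has no large projections to subsurfaces of $X$. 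Let $\mu'\in\mg$ be the extension of $c$ relative to $\mu$ (\defref{defMarkingExtension}) with $\mu'|_{f^i(X)}=f^i(\nu)$ for all $i$; this is legitimate since $c$ and the bases $\base(f^i(\nu))$ are pairwise disjoint. If instead $X=\s$, one runs the parallel construction on the level of base curves: \lemref{lemFiniteOrder} produces a subsurface $Y$ (with $\dist_\s(\partial Y,[b,f(b)])\le1$ for a suitable separating vertex $b$ of $[v_0,v]$) whose $f$--orbit is disjoint and geodesic-supporting and whose boundary multicurve is strictly closer to $\mu_0$ in $\cc$ than $\base(\mu)$, and one rebuilds $\mu'$ over this new base (this is the content of the later \propref{propBaseStep1}).

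Now check the two properties of $\mu'$. For (i) $\mu'\in\wfix_{R'}(f)$: if $\partial Z$ is disjoint from $c$, or $Z\in c$, then $d_Z(\mu',\mu)\prec1$ by \lemref{lemMarkingExtension}, and the same bound holds when $Z$ interlocks or properly contains some $f^i(X)$ since \lemref{lem2lip} keeps $\pi_Z$ of the altered curves within $O(N)$ of $\pi_Z(c)$; hence, using $f(c)=c$, $d_Z(\mu',f\mu')\le d_Z(\mu',\mu)+d_Z(\mu,f\mu)+d_Z(f\mu,f\mu')\prec R$. For $Z=f^i(X)$, transporting by $f^{-i}$ reduces the estimate to $d_X(\nu,f^{L_X+1}\nu)$; since $\mu$ is $NR$--symmetric on $X$ while $\mu_0$'s $f^{L_X+1}$--asymmetry on $X$ is $\le NQ$, and $f^{L_X+1}$ is an isometry of $\mathcal{C}(X)$ carrying $[\pi_X(\mu),\pi_X(\mu_0)]$ to a geodesic with $NR$--close and $NQ$--close endpoints, $\delta$--hyperbolicity lets the two geodesics track each other to within $O(R+\delta)$ from the $\pi_X(\mu)$--end out to distance $\ge d_X(\mu,\mu_0)-NQ-O(\delta)$, i.e.\ past $\nu$ — which is exactly why we stopped $\nu$ at distance $NR+NQ+\Theta$ from $\pi_X(\mu_0)$ — so $d_X(\nu,f^{L_X+1}\nu)\prec R$; thus $R'$ depends only on $\s,R$ and $R'\ge R$. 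For (ii) $\xi(\mu',R')<\xi(\mu,R)$: the placement of $\nu$ gives $d_{f^i(X)}(\mu_0,\mu')\le NQ+(NR+NQ+\Theta)=2NQ+NR+\Theta$, which is at most the threshold $2NQ+NR'+\Theta$ of \eqref{eqBD2} for the orbit (the relevant maximum over $\{f^j f^i(X)\}$ is again $Q$), so no member of the orbit of $X$ is $R'$--bad for $\mu'$; a domain $W$ with $\xi(W)\ge\xi(X)$ that was not in $\Omega(\mu,R)$ has $d_W(\mu,\mu')\prec1$ (its complexity forbids $W$ from being nested in any $f^i(X)$, so the cases above apply) and so stays below its threshold; and a bad $W$ with $\xi(W)=\xi(X)$ outside the orbit of $X$ satisfies, by maximality of $X$ under $\vartriangleleft$, either $X<_t W$ or time-order--incomparability with $X$, in the first (interlocking) case of which \corref{corTimeOrder2} applied to $\mu'$ forces $d_W(\mu_0,\mu')\le 2M_1$ (the alternative $d_X(\mu',\mu)\le 2M_1$ being excluded, or absorbed, by the $\Theta$--slack in \eqref{eqBD2}), so $W$ too leaves $\Omega(\mu',R')$.

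The main obstacle is the tension inside the inductive step between killing the bad domain and preserving symmetry: the base of $\nu$ must be slid far enough toward $\pi_X(\mu_0)$ to bring $d_X(\mu_0,\mu')$ below the bad-domain threshold $\sim 2NQ+NR$ (possible thanks to the coefficient $2N$ and the additive $\Theta=6M_1+4\delta$), yet stop soon enough — at distance $\sim N(R+Q)$ from $\pi_X(\mu_0)$ — that the transported marking still fellow-travels its $f$--image in every $\mathcal{C}(f^i(X))$, so that $\mu'$ remains a symmetric point for a uniform constant; the two demands are compatible precisely because $\Theta$ supplies the needed slack, and this is where the exact shape of \defref{defBadDomains} and the choice \eqref{eqTheta} of $\Theta$ enter. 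Moving the whole $f$--orbit of $X$ coherently — so the modification in $f^i(X)$ is the $f$--image of the one in $f^{i-1}(X)$ — is what makes the disjointness conclusions of \lemref{lemBadDomains} (and, for $X=\s$, of \lemref{lemFiniteOrder}) indispensable; and verifying that the relative marking extension manufactures no new large links — especially that annular subsurfaces inside $X$ remain controlled, which forces $\nu$ itself to be built by an inner induction on complexity — together with the time-order bookkeeping that rules out new bad domains of complexity $\ge\xi(X)$, is the most delicate part of the argument. The case $X=\s$ runs on the same principle, with base curves playing the role of transversals.
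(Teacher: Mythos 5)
Your overall architecture is the paper's: induct on the complexity $\xi(\mu,R)$ of the set of bad domains, use the distance formula to dispatch the case $\Omega(\mu,R)=\emptyset$ (this matches \propref{propEmpty}), and in the inductive step modify the marking coherently over the whole $f$--orbit of a bad domain, using \lemref{lemBadDomains} and \lemref{lemFiniteOrder} for disjointness. The top-level symmetry estimate $d_X(\nu,F\nu)\prec R$ via the isometry $F$ and thin quadrilaterals is also sound. But there are two genuine gaps in the inductive step.

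First, modifying over a \emph{single} maximal orbit does not force $\xi(\mu',R')<\xi(\mu,R)$. Your case analysis for a surviving bad $W$ with $\xi(W)=\xi(X)$ only disposes of the interlocking (time-ordered) case; a $W\in\Omega(\mu,R)$ of the same complexity that is disjoint from the orbit of $X$ and not time-ordered with it has $d_W(\mu,\mu')\prec 1$ and therefore \emph{remains} a bad domain of the same complexity, so one pass does not reduce $\xi$. This is exactly why \propref{propReduction} first builds the exhaustion $\mathcal{U}_1,\dots,\mathcal{U}_n$ of all maximal-complexity orbits (choosing each $X_i$ of maximal time-order among those supported in the complement of the previous $\mathcal{U}_j$'s) and applies the base step to all of them simultaneously; only then does every leftover same-complexity $W$ either interlock some $c_i$ (killed by time-order) or live in the exhausted complement (hence have smaller complexity). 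Your iteration count $\xi(\s)+3$ is built on the unproved one-step decrease.

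Second, and more seriously, for a non-annular bad domain $X$ you must control $d_Z(\mu',f\mu')$ for \emph{every} $Z\subseteq X$, not just $Z=X$. Your fellow-traveling argument bounds only $d_X(\nu,F\nu)$; for a proper $Z\subsetneqq X$ — say an annulus about a curve met by both $\base(\nu)$ and $F(\base(\nu))$ — the two markings can differ by an arbitrarily large twist even though their bases are close in $\mathcal{C}(X)$, so ``place the base on $[\pi_X(\mu),\pi_X(\mu_0)]$ and induce the rest'' does not yield a symmetric point. The phrase ``so that $\nu$ has no large projections to subsurfaces of $X$'' asserts the needed conclusion without a mechanism: the quantity that must be small is $d_Z(\nu,F\nu)$, and producing such a $\nu$ is precisely the content of \propref{propBaseStep1} run on $(X,F)$ — the separating-marking choice, the dichotomy between Case I ($d_Y(\mu,F\mu)\le\Delta$ for all $Y$ near $[b,F(b)]$) and Case II (pass to the disjoint orbit of a recalcitrant $Y$ via \lemref{lemFiniteOrder} and extend relative to the already-symmetric $\mu_I$), with the new sub-bad-domains deferred to later rounds of the complexity induction. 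That machinery is not optional bookkeeping; it is the core of the proof, and your sketch replaces it with an unspecified ``inner induction.''
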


  Assuming \thmref{thmFiniteOrder}, we can derive L.B.C.~property for
  finite order mapping classes by a standard argument, following \cite{BH99}.
  We first state and prove the following corollary of
  \thmref{thmFiniteOrder}, which reduces L.B.C.~property for finite order
  mapping classes to a finite problem.
  
  \begin{corollary}\label{corFiniteOrder}

    There exists a finite set $\Gamma \subset \mcg$ such that, for every finite
    order $f \in \mcg$, there exists $\omega \in \mcg$ such that $\omega^{-1} f
    \omega \in \Gamma$ and $ |\omega| \prec |f|$.

  \end{corollary}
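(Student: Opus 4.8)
The plan is to deduce this from \thmref{thmFiniteOrder} by the classical finiteness argument for elliptic isometries, following \cite{BH}: the action of \mcg on \mg is by isometries, is cocompact, and is proper, so any conjugate of $f$ that coarsely fixes a point lying near $\mu_0$ must belong to a fixed finite subset of \mcg. All of the substance has already been put into \thmref{thmFiniteOrder}; what remains is to track constants.

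First I would fix the constant $R \ge R_1$ supplied by \thmref{thmFiniteOrder}, and for a finite order $f \in \mcg$ take a marking $\mu \in \wfix_R(f)$ with $d_M(\mu_0,\mu) \prec d_M(\mu_0,f\mu_0)$. Applying the distance formula \thmref{thmDF} with any threshold $L \ge L_0$ that also satisfies $L > R$, every term of $\sum_{d_Y(\mu,f\mu)\ge L} d_Y(\mu,f\mu)$ vanishes, so $d_M(\mu,f\mu) \le C_0$ for a constant $C_0$ depending only on \s; that is, $\mu \in \fix_{C_0}(f)$ is a uniform coarse fixed point of $f$. Since the orbit map $\mcg \to \mg$, $\omega \mapsto \omega\mu_0$, is a quasi-isometry (\v{S}varc--Milnor), we also have $d_M(\mu_0,f\mu_0) \asymp |f|$, hence $d_M(\mu_0,\mu) \prec |f|$.

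Next, since $\mg/\mcg$ is finite, the orbit $\mcg\cdot\mu_0$ is $D$-dense in \mg for some $D = D(\s)$; choose $\omega \in \mcg$ with $d_M(\omega\mu_0,\mu) \le D$, and set $h := \omega^{-1} f \omega$. Because $\omega^{-1}$ acts by isometries, $\omega^{-1}\mu$ is a coarse fixed point of $h$ with the same constant, $d_M(\omega^{-1}\mu, h\,\omega^{-1}\mu) = d_M(\mu,f\mu) \le C_0$, and $d_M(\mu_0,\omega^{-1}\mu) = d_M(\omega\mu_0,\mu) \le D$. Two triangle inequalities then give
\[
 d_M(\mu_0,\omega\mu_0) \le d_M(\mu_0,\mu) + d_M(\mu,\omega\mu_0) \prec |f|,
\]
so $|\omega| \asymp d_M(\mu_0,\omega\mu_0) \prec |f|$, and
\[
 d_M(\mu_0,h\mu_0) \le d_M(\mu_0,\omega^{-1}\mu) + d_M(\omega^{-1}\mu, h\,\omega^{-1}\mu) + d_M(h\,\omega^{-1}\mu, h\mu_0) \le 2D + C_0 .
\]
Since the \mcg-action on \mg is proper and \mg is locally finite, the set $\Gamma := \{\, h \in \mcg : d_M(\mu_0,h\mu_0) \le 2D+C_0 \,\}$ is finite and depends only on \s, and we have shown $\omega^{-1} f \omega \in \Gamma$ with $|\omega| \prec |f|$, which is the claim.

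The only step that is not purely formal is the passage from the \emph{symmetric} condition $\mu \in \wfix_R(f)$ to an honest coarse fixed point $\mu \in \fix_{C_0}(f)$ of $f$, which is exactly where the distance formula is invoked and where one must check that the threshold can be chosen past $R$ uniformly; beyond that, the argument is the standard elliptic-isometry finiteness statement, so the genuine difficulty of this section resides in \thmref{thmFiniteOrder}, not in the corollary.
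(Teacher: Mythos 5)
Your proposal is correct and follows essentially the same route as the paper: pass from the symmetric fixed-point condition to a uniform coarse fixed point (the paper does this by "enlarging $R$," you do it explicitly via the distance formula with a threshold past $R$), move that fixed point into a $D$-ball around $\mu_0$ using cocompactness, and define $\Gamma$ as a ball in \mcg, finite by properness. The constant-tracking and triangle inequalities match the paper's computation.
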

  
  \begin{proof}

  By enlarging $R$ if necessary, we may rephrase \thmref{thmFiniteOrder} in
  terms of fixed points: there exists $R$ depending only on $\mu_B$ such that
  any finite order mapping class $f$ has a marking $\mu \in \fix_R(f)$ with
  \begin{equation}\label{eqFix} d_{\mg}(\mu_B, \mu) \prec d_{\mg}(\mu_B, f\mu_B).
  \end{equation}

  We construct the set $\Gamma$ as follows. Let $D$ be the diameter of
  $\mg$ modulo the action of $\mcg$. ($D$ is finite since the action of
  \mcg on \mg is cofinite). Set \[ \Gamma = \{\,\, g \in \mcg
      \,\,\,:\,\,\, d_{\mg}(\mu_B, g\mu_B) \le 2D + R, \, g \textrm{ finite order}
  \,\, \}.\] The action of $\mcg$ on $\mg$ is proper, thus $\Gamma$ is a
  finite set. We show $\Gamma$ satisfies the other properties as well.

  Let $f \in \mcg$ be of finite order. Let $\mu$ be a $R$-fixed point of $f$
  closest to $\mu_B$. Since the action of $\mcg$ on $\mg$ is cofinite, there
  exists $\omega \in \mcg$ such that $d_{\mg}(\omega\mu_B,\mu) \le D$. Then
  $\omega^{-1}f\omega \in \Gamma$, since
  \begin{align*}
    d_{\mg}(\mu_B, \omega^{-1}f\omega\mu_B) 
    & = d_{\mg}(\omega \mu_B, f \omega \mu_B) \\
    & \le d_{\mg}(\omega \mu_B, \mu) + d_{\mg}(\mu,f\mu) + d_{\mg}(f\mu, f\omega \mu_B) \\ 
    & \le d_{\mg}(\omega \mu_B, \mu) + d_{\mg}(\mu,f\mu) + d_{\mg}(\mu, \omega \mu_B) \\
    & \le 2D + R. 
  \end{align*}
  Moreover, by \eqref{eqFix} we have 
  \begin{align*}
    |\omega| 
    & \prec d_{\mg}(\mu_B,\omega\mu_B) \\ 
    & \le d_{\mg}(\mu_B,\mu) + d_{\mg}(\omega\mu_B,\mu) \\ 
    & \prec d_{\mg}(\mu_B, f\mu_B) + D \\
    & \prec |f|. \qedhere
  \end{align*} 
  \end{proof}
  
  \begin{corollary}[L.B.C.~property for finite order mapping
    classes]\label{corFiniteOrder}

   If $f, g \in \mcg$ are conjugate finite order mapping classes, then there is
   a conjugating element $\omega \in \mcg$ with \[ |\omega| \prec |f| + |g|. \]

  \end{corollary}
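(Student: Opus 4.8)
The plan is to deduce this formally from the preceding corollary, which replaces an arbitrary finite order mapping class by a member of the finite set $\Gamma$. First I would apply that corollary to $f$ and to $g$ separately: there are $\omega_1, \omega_2 \in \mcg$ with $a := \omega_1^{-1} f \omega_1 \in \Gamma$, $b := \omega_2^{-1} g \omega_2 \in \Gamma$, and $|\omega_1| \prec |f|$, $|\omega_2| \prec |g|$.

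Next I would observe that $a$ and $b$ are themselves conjugate in $\mcg$: since $f$ and $g$ are conjugate, write $g = \eta f \eta^{-1}$ for some $\eta \in \mcg$; then
\[
  b \;=\; \omega_2^{-1} g \omega_2 \;=\; \big(\omega_2^{-1}\eta\omega_1\big)\, a\, \big(\omega_2^{-1}\eta\omega_1\big)^{-1}.
\]
Now the key point is that $\Gamma$ is finite, so the set of ordered pairs in $\Gamma \times \Gamma$ consisting of conjugate elements is finite; for each such pair $(a,b)$ I would fix, once and for all, a conjugator $\gamma_{a,b} \in \mcg$ with $\gamma_{a,b}\, a\, \gamma_{a,b}^{-1} = b$, and let $C$ be the maximum of the lengths $|\gamma_{a,b}|$ over this finite list. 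Then $C$ depends only on $\s$ and $\Lambda$.

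Finally I would assemble the conjugator. Unwinding $f = \omega_1 a \omega_1^{-1}$, $g = \omega_2 b \omega_2^{-1}$, and $b = \gamma_{a,b}\, a\, \gamma_{a,b}^{-1}$, one checks that $\omega := \omega_2\, \gamma_{a,b}\, \omega_1^{-1}$ satisfies $\omega f \omega^{-1} = g$, while
\[
  |\omega| \;\le\; |\omega_1| + |\omega_2| + C \;\prec\; |f| + |g|.
\]
I do not expect any genuine obstacle at this stage: all the substance is contained in \thmref{thmFiniteOrder} and the reduction to the finite set $\Gamma$, and the present statement is a formal consequence of that reduction together with the triangle inequality for the word metric, the constant $K$ of \defref{defLBC} absorbing $C$ as well as the implicit multiplicative and additive constants hidden in the relations $|\omega_i| \prec |f|,\,|g|$.
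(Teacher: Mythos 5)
Your proposal is correct and is exactly the argument the paper gives, only with the algebra written out: the paper likewise conjugates $f$ and $g$ into the finite set $\Gamma$ via Corollary~\ref{corFiniteOrder} and then absorbs a fixed conjugator for each conjugate pair in $\Gamma$ into the constant. No issues.
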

  
  \begin{proof}
     
     Let $\Gamma \subset \mcg$ be the finite set of
     \thmref{thmFiniteOrder}. The content of \thmref{thmFiniteOrder} is
     that $\Gamma$ contains at least one and at most finitely many
     representatives for each conjugacy class of a finite order mapping
     class. Furthermore, each finite order $f$ can be conjugated into
     $\Gamma$ by a conjugating element whose word length is proportional to
     $|f|$. The result follows after picking a conjugating element for each
     pair of elements in $\Gamma$ of the same conjugacy class.
  \end{proof}

  The proof of \thmref{thmFiniteOrder} will occupy the rest of the section.
  The main observation is that if $\mu_1 \in \wfix_{R_1}(f)$ does not have
  any $R_1$--bad domains, then $\mu_1$ satisfies the statement of
  \thmref{thmFiniteOrder} (see \propref{propEmpty}).  If $\mu_1$ does have
  a $R_1$-bad domain $Y$, then we can construct a marking $\mu_2 \in
  \wfix_{R_2}(f)$, where $R_2$ depends only on $R_1$, such that $Y \notin
  \Omega(\mu_2,R_2)$. Ideally, we would like $\Omega(\mu_2,R_2)$ to be
  strictly smaller than $\Omega(\mu_1,R_1)$, but the situation is a bit
  more complicated. In trying to improve $\mu_1$ in $Y$, we may have
  created new bad domains but we will have control over what they are in
  relation to $Y$. We call this the base step of the proof. Although the
  set of bad domains are not necessarily decreasing, applying the base step
  in the right way will guarantee  a decrease in the complexity of
  $\Omega(\mu_2, R_2)$ from that of $\Omega(\mu_1,R_1)$. By iterating this
  process, we produce a sequence of symmetric points for $f$ such that the
  complexities of the sets of bad domains are monotonically decreasing.
  This process must stop to produce an $R$--symmetric point $\mu$ for $f$
  with no bad domains. Since the maximal complexity of any set of bad
  domains is the complexity of \s, the process of achieving $\mu$
  terminates after a bounded number of steps. This serves to ensure the
  constant $R$ will depend only on $R_1$ (and $\mu_B$).
  
  The rest of the section is organized as follows. We first prove in
  \propref{propEmpty} that no bad domain indeed implies
  \thmref{thmFiniteOrder}. Then the base step is dealt with in
  \secref{secBaseStep}. There are two propositions, \propref{propBaseStep1}
  and \propref{propBaseStep2}, associated to the base step, depending on
  whether the bad domain is the main surface or a proper subsurface. This
  is where our work in \secref{secTwoLemmas} will come in. In
  \secref{secIteration}, we explain how to use the base step to reduce
  complexity of the set of bad domains. The precise statement is
  \propref{propReduction}. The section will conclude with
  \corref{corTermination} which makes precise how the process terminates
  after a bounded number of steps.

  \subsection{No bad domains} \label{secEmpty} 
   
  \begin{proposition}[No bad domains]\label{propEmpty} 
   
   Let $\mu \in \wfix_R(f)$ where $R \ge R_1$. If $R$ is a constant
   depending only on $R_1$ such that $\Omega(\mu,R) = \emptyset$, then \[
   d_{\mg}(\mu_B, \mu) \prec d_{\mg}(\mu_B, f\mu_B). \] In other words, $\mu$ and
   $R$ satisfy \thmref{thmFiniteOrder}.

  \end{proposition}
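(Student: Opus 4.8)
The plan is to use the distance formula (\thmref{thmDF}) to convert the statement $d_M(\mu_0,\mu) \prec d_M(\mu_0,f\mu_0)$ into a statement comparing, over all domains $Y \subseteq \s$, the sum $\sum_Y d_Y(\mu_0,\mu)$ with the sum $\sum_Y d_Y(\mu_0,f\mu_0)$. Fix the threshold constant $L = L_0$ from \thmref{thmDF} and a hierarchy $H = H(\mu_0,\mu)$. For each domain $Y$ contributing to the left-hand sum, we want to bound $d_Y(\mu_0,\mu)$ by a linear expression in $d_Y(\mu_0,f\mu_0)$ (and orbit terms). The hypothesis $\Omega(\mu,R) = \emptyset$ means precisely that \eqref{eqBD1} and \eqref{eqBD2} fail for every proper $X$ and for $\s$; that is, for every $X \subsetneq \s$ we have
\[
  d_X(\mu_0,\mu) \le 2N \Big( \max_{0 \le i \le L_X} d_{f^i(X)}(\mu_0,f\mu_0) \Big) + NR + \Theta,
\]
and $d_\s(\mu_0,\mu) \le d_\s(\mu_0,f\mu_0) + R$. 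So each ``large-link'' contribution on the $\mu$ side is already dominated by contributions on the $f\mu_0$ side.

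The main step is to sum these inequalities and show the total is $\prec \sum_{Y} d_Y(\mu_0,f\mu_0) + (\text{bounded})$. The subtle point is the factor $2N$ and the passage from $d_X(\mu_0,\mu)$ to $d_{f^i(X)}(\mu_0,f\mu_0)$: a single domain $Y$ on the right might be charged by several domains $X$ on the left (namely those in its $f$-orbit), but since $\ord(f) \le N$ by \lemref{lemOrderBound}, each $Y$ is charged at most $N$ times, so up to a multiplicative constant depending only on $N$ (hence only on \s) the double sum is controlled. I would organize this as: (i) split $\sum_Y d_Y(\mu_0,\mu)$ over $Y = \s$ and $Y \subsetneq \s$; (ii) handle $Y=\s$ directly from the failure of \eqref{eqBD1}; (iii) for proper $Y$ with $d_Y(\mu_0,\mu) \ge L$, apply the failure of \eqref{eqBD2}, then reindex the resulting bound over $f$-orbits, absorbing the multiplicities and additive constants $NR + \Theta$ into the coarse constants (the number of domains with $d_Y(\mu_0,\mu) \ge L$ is itself bounded by $|H|$, which is in turn coarsely $d_M(\mu_0,f\mu_0)$, so the additive-constant terms still sum to something $\prec d_M(\mu_0,f\mu_0)$). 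Applying \thmref{thmDF} in the other direction then yields $d_M(\mu_0,\mu) \prec \sum_{d_Y(\mu_0,f\mu_0) \ge L'} d_Y(\mu_0,f\mu_0) \asymp d_M(\mu_0,f\mu_0)$, possibly after adjusting the threshold.

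The hard part will be the bookkeeping in step (iii): making sure that when each $d_Y(\mu_0,\mu)$ is bounded by a \emph{maximum} of orbit terms rather than a sum, the reindexed total genuinely stays linear — i.e. verifying that $\sum_{Y \subsetneq \s,\, d_Y(\mu_0,\mu) \ge L} \max_{0\le i \le L_Y} d_{f^i(Y)}(\mu_0,f\mu_0) \prec \sum_{Z \subsetneq \s} d_Z(\mu_0,f\mu_0) + d_M(\mu_0,f\mu_0)$, where on the right one may need to drop the threshold so that domains with small but nonzero projection are still counted, which is legitimate since lowering $L$ only changes the coarse constants in the distance formula. One also needs to check that domains $Y$ with $L \le d_Y(\mu_0,\mu)$ but $d_{f^i(Y)}(\mu_0,f\mu_0)$ small for all $i$ cannot occur in bulk — but this is exactly what $\Omega(\mu,R)=\emptyset$ rules out, since such a $Y$ with $d_Y(\mu_0,\mu)$ exceeding $NR+\Theta$ would be an $R$-bad domain. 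So after fixing $R$ large enough (depending only on $R_1$, hence only on \s) and tracking constants carefully, the estimate closes.
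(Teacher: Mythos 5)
Your proposal follows essentially the same route as the paper's proof: negate the bad-domain conditions, apply the distance formula on both sides, send each large-link domain for $(\mu_0,\mu)$ to an orbit representative $f^{i_X}(X)$ that is a large link for $(\mu_0,f\mu_0)$, and bound the multiplicity of this reindexing map by $N \ge \ord(f)$. One intermediate justification you offer for absorbing the additive constants --- that the number of large links is bounded by $|H|$, ``which is in turn coarsely $d_M(\mu_0,f\mu_0)$'' --- is circular, since $|H(\mu_0,\mu)| \asymp d_M(\mu_0,\mu)$ is the quantity being bounded; but your closing observation that taking the threshold on the $\mu$ side to be $2NL_0 + NR + \Theta$ forces $\max_i d_{f^i(Y)}(\mu_0,f\mu_0) \ge L_0$ is exactly the paper's device, and it lets the additive terms be absorbed multiplicatively term by term, so the argument closes.
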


  \begin{proof} 
    
    The assumption $\Omega(\mu, R) = \emptyset$ means that for every $X
    \subseteq \s$, there exists $i_X$ such that 
    \begin{equation*}
    d_X(\mu_B, \mu) \le 2N d_{f^{i_X}(X)}(\mu_B, f\mu_B) + NR + \Theta.
    \end{equation*} 
    Let $L_0$ be the constant of the distance formula, \thmref{thmDF}. Let 
    \[ 
      \Phi
      = \{\,\, X \subseteq \s \,\,\, : \,\,\, d_X(\mu_B,\mu) 
      \ge 2NL_0 + NR + \Theta \,\,\}, 
    \] 
    and 
    \[ 
       \Psi = \{ \,\, Y \subseteq \s \,\,\, : \,\,\, d_Y(\mu_B,f\mu_B) \ge
       L_0 \,\,\}. 
    \] 
    Then there is a map $\Phi \to \Psi$ sending $X \mapsto f^{i_X}(X)$.
    This map has multiplicity at most the order of $f$, which is bounded by
    $N$. Therefore, 
    \begin{align*} 
      d_{\mg}(\mu_B,\mu) 
      & \asymp \sum_{X \in \Phi} d_X(\mu_B,\mu) \\
      & \le \sum_{X \in \Phi} 2N d_{f^{i_X}(X)}(\mu_B,f\mu_B) +
      NR + \Omega\\ 
      & \prec \sum_{X \in \Phi} d_{f^{i_X}(X)}(\mu_B,f\mu_B) \\  
      & \le N \sum_{Y \in \Psi} d_Y(\mu_B,f\mu_B) \\
      & \asymp d_{\mg}(\mu_B,f\mu_B). \qedhere
    \end{align*}
  \end{proof}

  \subsection{Base step}\label{secBaseStep} 
  
  We are now ready to state and prove the base step of the proof for
  \thmref{thmFiniteOrder}. There are two cases to consider, which are
  \propref{propBaseStep1} and \propref{propBaseStep2}. The proof of
  \propref{propBaseStep1} will be essential for \propref{propBaseStep2}.
  
  Let $\mu_B$ be the base marking in \mg, and recall the definition of $\xi
  \big( \Omega(\mu,R) \big)$ as in \defref{defComplexity}.

  \begin{proposition}[Base Step 1]\label{propBaseStep1}
    
    Given $R_I \ge \max\{ 2\delta+4, R_1 \}$ there exists a constant $R_O$
    depending only on $R_I$ with the following property. Given $\mu_I \in
    \wfix_{R_I}(f)$, if $\s \in \Omega(\mu_I, R_I)$, then there exists
    $\mu_O \in \mg$ satisfying the following properties: 
    \begin{enumerate}

    \item[(P1)] $\mu_O \in \wfix_{R_O}(f)$.

    \item[(P2)] $\Omega(\mu_O, R_O) \subsetneqq \Omega(\mu_I, R_I)$. In
      addition, $\s \notin \Omega(\mu_O, R_O)$, and thus $\xi(\mu_O, R_O)
      \lneqq \xi(\mu_I, R_I)$.

  \end{enumerate}

  \end{proposition}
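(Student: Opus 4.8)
The plan is to run a hierarchy from $\mu_0$ to $\mu_I$, stop at a carefully chosen vertex of the main geodesic, pass to a separating marking there, and repair that marking inside the subsurfaces where it fails to be symmetric. Let $g=[v_0,v]$ be the main geodesic of $H=H(\mu_0,\mu_I)$, so $v_0\subseteq\base(\mu_0)$, $v\subseteq\base(\mu_I)$, and $\s\in\Omega(\mu_I,R_I)$ gives $d_\s(v_0,v)>d_\s(\mu_0,f\mu_0)+R_I-O(1)$; since $\mu_I\in\wfix_{R_I}(f)$ the curve $v$ is a coarse fixed point of $f$ on $\cc$, with $d_\s(v,f(v))\le R_I+2$. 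If $d_\s(\mu_0,\mu_I)$ exceeds $d_\s(\mu_0,f\mu_0)$ by only a bounded amount (a threshold depending on $R_I$ and $\delta$), I take $\mu_O=\mu_I$ and $R_O$ equal to that threshold: then $\Omega(\mu_O,R_O)\subseteq\Omega(\mu_I,R_I)$ since $R_O\ge R_I$, and $\s$ drops out by the choice of $R_O$. So I may assume $d_\s(v_0,v)$ is large; then, using $\delta$--hyperbolicity and the ellipticity of $f$ (the $f$--orbit of $v_0$ has diameter $\asymp d_\s(\mu_0,f\mu_0)$, and $g$ runs from $v_0$ towards the coarse fixed set, which sits near $v$), I choose a vertex $b$ on $g$ with $d_\s(v_0,b)\le d_\s(\mu_0,f\mu_0)+O_\delta(1)$ and $\dist_\s\big([b,f(b)],[v_0,f(v_0)]\big)\ge 4$, the second obtained by estimating the Gromov product $(b\mid f(b))_{v_0}$ once $b$ has left the fellow-traveling zone of $[v_0,f(v_0)]$. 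Let $\mu'$ be a separating marking at $b$ in $H$; it is a hierarchal marking, $d_\s(\mu_0,\mu')\le d_\s(\mu_0,f\mu_0)+O_\delta(1)$, and by \lemref{lemSeparatingMarking} every proper $Y\subsetneq\s$ satisfies $d_Y(\mu_0,\mu')\le M_1$ or $d_Y(\mu',\mu_I)\le M_1$.

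Next I would isolate and repair the non-symmetric subsurfaces. Take $\Delta$ from \lemref{lemFiniteOrder} (for $R=R_I$), enlarged so that $\Delta>2M_1+R_I$. If $d_Y(\mu',f\mu')>\Delta$ for a proper $Y$, then $d_Y(\mu',\mu_I)\le M_1$ is impossible (it would force $d_Y(\mu',f\mu')\le 2M_1+R_I$ via $\mu_I\in\wfix_{R_I}(f)$), so $d_Y(\mu_0,\mu')\le M_1$, hence $d_Y(\mu_0,f\mu_0)>\Delta-2M_1$; using $b\in\base(\mu')$ and \lemref{lemLargeLink} together with \thmref{thmBGI} one checks $\partial Y$ is close to $[b,f(b)]$, so \lemref{lemFiniteOrder} (applied with $\mu=\mu_I$) shows $Y,f(Y),\dots,f^{L_Y}(Y)$ are mutually disjoint domains of geodesics of $H$. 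A time-order argument exactly as in \lemref{lemBadDomains} shows the orbits coming from different such $Y$ are pairwise disjoint, so the maximal ones form a finite, $f$--invariant collection of mutually disjoint subsurfaces. On each orbit $\{f^j(Y)\}$ I replace $\mu'$: on $Y$ by a marking whose projection to $\mathcal{C}(Y)$ is a coarse fixed point of the first return map $f^{L_Y+1}|_Y$ lying within $O\big(\max_{0\le i\le L_Y}d_{f^i(Y)}(\mu_0,f\mu_0)\big)$ of $\pi_Y(\mu_0)$, chosen by an inductive procedure inside $Y$ (whose complexity is strictly smaller than $\xi(\s)$) so as to have no bad subsurfaces in $Y$, and then propagated by $f$ to $f(Y),\dots,f^{L_Y}(Y)$. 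Extending the union of these repaired pieces together with $b$ and the remaining base curves of $\mu'$ to a complete marking relative to $\mu'$ yields $\mu_O\in\mg$.

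It remains to verify (P1) and (P2). For (P1): away from the repaired orbits $\mu_O$ agrees with $\mu'$ up to $O(1)$ in every subsurface (\lemref{lemMarkingExtension} and \eqref{eqInduced}), where $d_Y(\mu',f\mu')\le\Delta$ by construction; inside a repaired orbit $\mu_O$ is $f$--equivariant, hence $O(1)$--symmetric for the first return map; so $\mu_O\in\wfix_{R_O}(f)$ with $R_O$ depending only on $\Delta$, i.e.\ only on $R_I$ and $\s$. For (P2): $d_\s(\mu_0,\mu_O)\le d_\s(\mu_0,\mu')+O(1)\le d_\s(\mu_0,f\mu_0)+O_\delta(1)$, so $\s\notin\Omega(\mu_O,R_O)$ once $R_O$ absorbs that constant. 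For $\Omega(\mu_O,R_O)\subseteq\Omega(\mu_I,R_I)$: a proper $X$ lying in or crossing a repaired orbit is not bad for $\mu_O$, because the repair inside $Y$ has no bad subsurfaces and $X$ interacts with a disjoint orbit only boundedly; and if $X$ is disjoint from all repaired orbits then \lemref{lemMarkingExtension} gives $\pi_X(\mu_O)\approx\pi_X(\mu')$ while \lemref{lemTriangle} applied to $\mu'$ in $H$ gives $d_X(\mu_0,\mu')\le d_X(\mu_0,\mu_I)+M_3$, so $d_X(\mu_0,\mu_O)\le d_X(\mu_0,\mu_I)+O(1)$; taking $R_O\ge R_I+O(1)/N$ then converts $X\in\Omega(\mu_O,R_O)$ into $X\in\Omega(\mu_I,R_I)$. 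Strictness, and the inequality $\xi(\mu_O,R_O)<\xi(\mu_I,R_I)$, follow since $\s\in\Omega(\mu_I,R_I)\setminus\Omega(\mu_O,R_O)$.

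The main obstacle is the choice of $b$: making its two requirements coexist — $d_\s(v_0,b)$ within an additive constant of $d_\s(\mu_0,f\mu_0)$, yet $[b,f(b)]$ staying at distance at least $4$ from $[v_0,f(v_0)]$ — requires a delicate $\delta$--hyperbolic estimate that genuinely uses the ellipticity of $f$, together with the separate easy treatment of the degenerate case in which $\mu_I$ already lies close to $\mu_0$ in $\cc$. A close second is arranging the repair inside each bad subsurface so that it creates no new bad domains, which is where the induction on complexity enters.
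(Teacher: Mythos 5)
Your choice of the junction vertex $b$ on the main geodesic of $H(\mu_0,\mu_I)$, the passage to a separating marking $\mu'$ there, and the dichotomy governed by $\Delta$ and \lemref{lemFiniteOrder} all match the paper's argument (the extra "degenerate case" split is harmless but unnecessary given the hypothesis $R_I\ge 2\delta+4$). The divergence, and the gap, is in what you do when some $Y$ with $\dist_\s(\partial Y,[b,f(b)])\le 1$ has $d_Y(\mu',f\mu')>\Delta$. You build $\mu_O$ as an extension relative to $\mu'$, which forces you to repair \emph{every} such $Y$ simultaneously; for that you assert that "the orbits coming from different such $Y$ are pairwise disjoint" by "a time-order argument exactly as in \lemref{lemBadDomains}." That lemma only gives disjointness \emph{within} a single $f$--orbit: the contradiction there is extracted by transporting estimates around the orbit using the symmetry $\mu\in\wfix_R(f)$, and there is no analogous transport between two unrelated subsurfaces $Y$ and $Y'$. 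Two such domains can perfectly well interlock (taking "maximal ones" only disposes of nesting), in which case your multicurve of boundary components is not a multicurve and the construction does not get off the ground. The paper never needs this: it picks a \emph{single} $Y$, sets $c=\partial Y\cup\cdots\cup\partial f^{L_Y}(Y)$, and extends $c$ to a marking relative to $\mu_I$ (not $\mu'$). Because the complementary pieces are induced from the symmetric marking $\mu_I$, symmetry of $\mu_O$ in every $Z$ disjoint from $c$ is inherited from $\mu_I$ with a $2M_3$ loss, and $d_Z(\mu_0,\mu_O)\le d_Z(\mu_0,\mu_I)+M_3+4$ for all $Z$, so no new bad domains appear and only $\s$ is removed --- which is all (P2) asks for.

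The second gap is the recursive "repair" inside each orbit, producing a marking on $Y$ that is a coarse fixed point of the first return map \emph{and} has no bad subsurfaces in $Y$. That is the full strength of \thmref{thmFiniteOrder} one complexity level down, and you have not set up the induction that would make it available here; moreover, translating "no bad subsurfaces of $Y$ for the first return map" into "no bad subsurfaces of $\s$ for $f$" requires the factor-of-$N$ bookkeeping that the paper carries out in (c4)--(c5) of \propref{propBaseStep2}, and even there the paper \emph{does not} achieve what you claim: new bad domains of strictly smaller complexity are allowed to appear (property (Q5)) and are eliminated only by the complexity-decreasing iteration of \propref{propReduction}. In short, you are trying to prove Base Step 1 by front-loading the content of Base Step 2 and the termination argument, and the two load-bearing claims you need for that (pairwise disjointness of distinct bad orbits, and a recursion that creates no new bad domains) are both unjustified. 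Replacing "extend relative to $\mu'$ and repair everything" with "choose one bad orbit and extend its boundary multicurve relative to $\mu_I$" removes both difficulties and yields exactly (P1) and (P2).
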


  \begin{proof} 
    
    We have four hierarchies: 
    \[
       H(\mu_B,f\mu_B), \quad H(\mu_I,f\mu_I), \quad H(\mu_B,\mu_I), \quad
       H(f\mu_B,f\mu_I). 
    \] 
    Consider the four main geodesics corresponding to the
    four hierarchies, forming a quadrilateral $Q$ in $\cc$: \[ \big[
        v_B,f(v_B)
    \big], \quad \big[ v_I,f(v_I) \big], \quad \big[ v_B,v_I \big], \quad \big[
    f(v_B),f(v_I) \big],\] where $v_B$ and $v_I$ are base curves in $\mu_B$ and
    $\mu_I$, respectively. Our assumption is that $\s \in \Omega(\mu_I,R_I)$, so
    \[ d_\s(v_B,v_I) > d_\s \big( v_B,f(v_B) \big) + R_I. \] 

    Since $f$ acts on $\cc$ as an isometry, $d_\s(v_B,v_I) = d_\s \big(
    f(v_B),f(v_I) \big)$, so $Q$ is $2\delta$--thin: every edge of $Q$ is
    contained in a $2\delta$--neighborhood of the other edges. The
    geodesics $[v_I, v_B]$ and $[f(v_I), f(v_B)]$ $2\delta$--fellow travel
    for awhile until $[v_I, v_B]$ begins fellow traveling $[v_B, f(v_B)]$.
    Choose the vertex $b$ on $[v_I, v_B]$ at the junction where this change
    takes place. After possibly moving $b$ toward $v_I$, by at most
    $2\delta+4$ positions, we may assume the following properties for $b$
    (see \figref{figSurfaceS}): 
    \begin{itemize}
    
      \item $d_\s \big( b,f(b) \big) \le 2 \delta$, and 
    
      \item $4 \le \dist_\s \Big( \big[ b,f(b) \big], \big[ v_B,f(v_B) \big]
      \Big) \le 6\delta + 4$. 

    \end{itemize}
    By the triangle inequality, we have 
    \begin{equation}\label{eqBS1a}
      d_\s(b,v_B) \le d_\s \big( v_B,f(v_B) \big) + 8\delta + 4.
    \end{equation}

    \begin{figure}
    \begin{center}
      \begin{tikzpicture}

        \draw[thick, rounded corners=24pt]
        (0,1.5) -- (2,0.15) -- (8,0.15);
        \draw[thick, rounded corners=24pt]
        (9,0.15) -- (10,0.15) -- (13,0.3);
        \draw[thick]
        (8,0.15) -- (9,0.15);
        
        \draw[thick, rounded corners=24pt]
        (0,-1.5) -- (2,-0.15) -- (8,-0.15);
        \draw[thick, rounded corners=24pt]
        (9,-0.15) -- (10,-0.15) -- (13,-0.3);
        \draw[thick]
        (8,-0.15) -- (9,-0.15);

        \draw[thick, rounded corners=10pt]
        (0,1.5) -- (2,0) -- (0,-1.5);
        \draw[thick, rounded corners=10pt]
        (13,0.3) -- (12.5,0) -- (13,-0.3);

        \draw[thick, rounded corners=10pt]
        (2.3,0.23) -- (1.9,0) -- (2.3,-0.23);

        \filldraw[fill=green, draw=black, thick] (2.3,0.23)  circle (2.5pt);
        \filldraw[fill=green, draw=black, thick] (2.3,-0.23) circle (2.5pt);

        \draw (0,1.5)     node[anchor=south]      {$f(v_B)$};
        \draw (0,-1.5)    node[anchor=north]      {$v_B$};
        \draw (13,0.3)    node[anchor=south]      {$f(v_I)$};
        \draw (13,-0.3)   node[anchor=north]      {$v_I$};
        \draw (2.3,0.23)  node[anchor=south west] {$f(b)$};
        \draw (2.3,-0.23) node[anchor=north west] {$b$};

      \end{tikzpicture}
    \end{center}
    \caption{The quadrilateral $Q$ in $\cc$}
    \label{figSurfaceS}
    \end{figure}
    
    Now choose the separating marking $\mu$ in $H(\mu_B,\mu_I)$ at $b$.
    The proof divides into two cases. To describe these cases, consider any
    domain $Y \subset \s$ for which 
    \begin{equation}\label{eqBS1}
      \dist_\s \big( \partial Y, [ b,f(b)] \big) \le 1,
    \end{equation}
    and let 
    \[ 
       \Delta = (2N+1) M_0 + (2N+1)R_I + 10M_1. 
    \] 
    \noindent \textbf{Case I.} Suppose $d_Y(\mu, f\mu) \le \Delta$ for all
    $Y \subset \s$ satisfying \eqref{eqBS1}. In this case, set $\mu_O=\mu$
    and $R_O = \Delta$. We show $\mu_O$ and $R_O$ satisfy
    \propref{propBaseStep1}. First note that $\s \notin \Omega(\mu_O, R_O)$
    by \eqref{eqBS1a}. Since $\mu_O$ is a hierarchal marking in $H(\mu_B,
    \mu_I)$, we also have, for all $Z \subset \s$, \[ d_Z(\mu_B,\mu_O) \le
      d_Z(\mu_B,\mu_I) + M_3. 
    \] 
    Since $M_3 < M_1 < R_O$, this verifies (P2).
    
    To see property (P1), we consider $d_Z(\mu_O, f\mu_O)$ for three
    possibilities of $Z$.
    \begin{enumerate}
    
      \item[(a1)] If $d_\s \big( \partial Z, [ b,f(b)] \big) \le 1$, then \[
        d_Z(\mu_O, f\mu_O) \le \Delta = R_O. \]
       
      \item[(a2)] If $d_\s \big( \partial Z, [ b,f(b) ] \big) > 1$, then every
        vertex of $\big[ b, f(b) \big]$ cuts $Z$. By \thmref{thmBGI}, 
        \[
           d_Z \big(b,f(b)\big) \le M_0 \quad \Longrightarrow \quad
           d_Z(\mu_O,
           f\mu_O) \le M_0 + 4. 
        \]
   
      \item[(a3)] If $Z = \s$, then by construction, $d_\s \big( b,f(b)
      \big) \le 2\delta$, thus \[ d_\s(\mu_O, f\mu_O) \le 2\delta + 4.\]
    
    \end{enumerate}
    This ends the proof of the proposition in Case I.

    \noindent \textbf{Case II.} Suppose there exists $Y$ with $d_Y(\mu,
    f\mu) > \Delta$ for some $Y \subset \s$ satisfying \eqref{eqBS1}. In
    this case, \lemref{lemFiniteOrder} implies that $Y$ and all its orbits
    under $f$ are pairwise disjoint. Consider the multicurve \[ c =
      \partial Y \cup \partial f(Y) \cup \cdots \cup \partial f^{L_Y}(Y).
    \] Let $\mu_O$ be a marking extension of $c$ relative to $\mu_I$, as
    in \defref{defMarkingExtension}. In particular, $c \subseteq
    \base(\mu_O)$. Set \[ R_O = \max \big\{ R_I + 2M_3, 10\delta + 13 \big\}.
    \] Consider the following properties for $\mu_O$ and $R_O$.
    \begin{enumerate}
    
      \item[(b1)] If $Z = \s$, then, since both $\mu_O$ and $f\mu_O$
      contain $c$ as base curves, 
      \begin{equation} \label{eqbone}
         d_\s(\mu_O, f\mu_O) \le d_\s(\mu_O, c) + d_\s(c, f\mu_O) \le 4.
      \end{equation}
      Also, since \[ \dist_\s \Big(
      \partial Y, \big[ b,f(b) \big] \Big) \le 1,\] we have
        \begin{align}\label{eqC3}
          d_\s(\mu_B, \mu_O) 
          & \le d_\s(v_B, \partial Y) + 4\nonumber \\
          & \le d_\s(v_B, b) + d_\s \big( b, f(b) \big) + 5 \nonumber \\ 
          & \le \Big( d_\s \big(v_B,f(v_B) \big)+8\delta+4 \Big) + 2\delta
          + 5 \nonumber \\ 
          & \le d_\s \big( v_B,f(v_B) \big) + 10\delta + 9 \nonumber \\  
          & \le d_\s \big( \mu_B, f(\mu_B) \big) + 10 \delta + 13.
        \end{align}

      \item[(b2)] If $Z \ne \s$, but some curve $\alpha$ in $c$ crosses $Z$, then 
      \begin{equation} \label{eqbtwo}
         d_Z(\mu_O, f\mu_O) \le d_Z( \mu_O, \alpha) + d_Z (\alpha, f\mu_O) \le 8. 
       \end{equation}
      Furthermore, according to \lemref{lemFiniteOrder}, $f^i(Y)$ all support a
      geodesic in $H(\mu_B, \mu_I)$, so there exists a slice of
      $H(\mu_B,\mu_I)$ containing $\alpha$. Therefore,
      \begin{align} \label{eqbtwob}
          d_Z(\mu_B, \mu_O) 
          & \le d_Z(\mu_B, \alpha) + 4 \nonumber \\
          & \le d_Z(\mu_B, \mu_I) + M_3 + 4.
        \end{align}

      \item[(b3)] If $Z \ne \s$ is such that $Z$ is disjoint from $c$
      or $Z$ is curve in $c$, then we are in the situation of
      \lemref{lemMarkingExtension}. Note that $f\mu_O$ is a marking
      extension of $c$ relative to $f\mu_I$. 
      \begin{align} \label{eqbthree}
         d_Z(\mu_O, f\mu_O) 
         & \le d_Z(\mu_O,\mu_I) + d_Z(\mu_I,f\mu_I) + d_Z(f\mu_I,f\mu_O)
         \nonumber\\ 
         & \le d_Z(\mu_I, f\mu_I) + 2M_3 \nonumber \\
         & \le R_I + 2M_3 \nonumber \\
         & \le R_O,
      \end{align}
       and 
       \begin{equation} \label{eqbthreeb} 
          d_Z(\mu_B, \mu_O) \le d_Z(\mu_B,\mu_I) + d_Z(\mu_I,\mu_O) \le
          d_Z(\mu_B, \mu_I) + M_3. 
        \end{equation} 
    
    \end{enumerate}
   
    Using the analyses of (b1) to (b3), we verify properties (P1) and (P2)
    for $\mu_O$ and $R_O$. From \eqref{eqbone}, \eqref{eqbtwo},
    \eqref{eqbthree}, we have that, for any $Z \subseteq \s$, $d_Z(\mu_O,
    f\mu_O) \le R_O$. Thus $\mu_O \in \wfix_{R_O} (f)$ and (P1) is
    verified. To see (P2), first note that, by \eqref{eqC3}, we have $\s
    \notin \Omega(\mu_O, R_O)$. Now, if $Z \in \Omega(\mu_O, R_O$) where $Z
    \subset \s$, then $Z$ must be of case (b2) or (b3). In either case,
    using \eqref{eqbtwob} or \eqref{eqbthreeb}, we obtain 
    \begin{align*}
      d_Z(\mu_B, \mu_I) 
      & \ge d_Z(\mu_B,\mu_O) - (M_3+4)\\
      & > 2N \Big( \max_i \big\{ d_{f^i(Z)}(\mu_B, f\mu_B) \big\} \Big) +
      NR_O + \Theta - (M_3+4) \\ 
      & > 2N \Big( \max_i \big\{ d_{f^i(Z)}(\mu_B, f\mu_B) \big\} \Big) + N(R_I +
      2M_3) + \Theta -(M_3+4)\\ 
      & > 2N \Big( \max_i \big\{ d_{f^i(Z)}(\mu_B, f\mu_B) \big\} \Big) + NR_I +
      \Theta.
    \end{align*}
    Therefore, $\Omega(\mu_O, R_O) \subset \Omega(\mu_I,R_I)$, establishing
    (P2). This finishes the proof the proposition in Case II.
  \end{proof}








  %




  Before we state the next proposition we will need some notations and
  definitions. Given proper domains $X, Y \subset \s$, let 
  \[ 
     \mathcal{U} = X \cup \cdots \cup f^{L_X}(X) \quad \text{ and } \quad
     \mathcal{V} = Y \cup \cdots \cup f^{L_Y}(Y). 
  \] 
  We will say $Y$ is \emph{supported} on $\s \setminus \mathcal{U}$ if $Y$
  lies in some component of $\s \setminus \mathcal{U}$. In the case that
  $X$ is not a curve, $Y$ can be a boundary curve of $f^i(X)$, for some $0
  \le i \le L_X$. Note the symmetry in the definition: if $Y$ is supported
  on $\s \setminus \mathcal{U}$ then $X$ is supported on $\s \setminus
  \mathcal{V}$. Furthermore, if $Y$ is supported on $\s \setminus
  \mathcal{U}$, then so is $f^j(Y)$ for all $j=0, \ldots L_Y$. Thus it
  makes sense to say that $\mathcal{U}$ and $\mathcal{V}$ are disjoint.
  Similarly, given $X_1 \ldots X_n \subset \s$ and let \[ \mathcal{U}_i =
  X_i \cup \cdots \cup f^{L_{X_i}}(X_i),\] we will say $\mathcal{U}_1,
  \ldots \mathcal{U}_n$ are pairwise disjoint if, for all $1 \le i, j \le
  n$, $i \ne j$, $X_i$ is supported on $\s \setminus \mathcal{U}_j$. 

  \begin{proposition}[Base Step 2]\label{propBaseStep2}

    Given $R_I \ge \max \{2\delta +4, R_1\}$ there exists a constant $R_O$
    depending only on $R_i$ with the following property. Given $\mu_I \in
    \wfix_{R_I}(f)$ and suppose $\s \notin \Omega(\mu_I, R_I)$. If
    $\Omega(\mu_I,R_I)$ contains proper domains $X_1, \ldots X_n \subset
    \s$ such that $\mathcal{U}_1, \ldots, \mathcal{U}_n$ are pairwise
    disjoint, where $\mathcal{U}_i = X_i \cup \cdots \cup
    f^{L_{X_i}}(X_i)$, then there exists $\mu_O \in \mg$, satisfying the
    following properties:
    \begin{enumerate}

    \item[(Q1)] $\mu_O \in \wfix_{R_O}(f)$.

    \item[(Q2)] For $j=1,\ldots n$, let $c_j = \partial X_i \cup \partial
      f(X_i) \cup \cdots \partial f^{L_{X_i}}(X_i)$. Then $c = \bigcup_j
      c_j \subseteq \base(\mu_O)$. 

    \item[(Q3)] For all $j=1, \ldots n$ and all $i = 0, \ldots L_{X_j}$,
      $f^i(X_j) \notin \Omega(\mu_O, R_O)$.
   
    \item[(Q4)] Suppose $Z \in \Omega(\mu_I, R_I)$ has the property that $Z$
      interlocks $f^i(X_j)$, for some $0 \le j \le n$ and some $0 \le i \le
      L_{X_j}$. If $X_j <_t f^{-i}(Z)$ in $H(\mu_B, \mu_I)$, then $Z \notin
      \Omega(\mu_O, R_O)$.

    \item[(Q5)] If $Z \in \Omega(\mu_O, R_O)$ but $Z \notin \Omega(\mu_I,
    R_I)$, then $Z$ must be a subsurface of $f^i(X_j)$, for some $0 \le j
    \le n$ and $0 \le i \le L_{X_j}$. In particular, $\xi(Z) < \xi(X_j)$.

  \end{enumerate}

  \end{proposition}
  
  \begin{remark}\label{remBaseStep2}

    We briefly explain the statements in (Q1) to (Q5). 
    
    First note that, by \lemref{lemBadDomains}, for each $j=1, \ldots n$,
    \[ X_j, f(X_j), \ldots, f^{L_{X_j}}(X_j) \] are all pairwise disjoint.
    Since the $\mathcal{U}_i$'s are assumed to be pairwise disjoint, the
    set $c=\bigcup_j c_j$ is a multicurve on \s, so property (Q2) makes
    sense. (Recall that if $X_j$ is a curve, then $\partial X_j = X_j$.)

    Secondly, the assumption in (Q4) also makes sense. If $Z \in
    \Omega(\mu_I, R_I)$ interlocks $f^i(X_j)$, then $X_j$ and $f^{-i}(Z)$
    interlock by the action of $f$. Since they both support geodesics in
    $H(\mu_B, \mu_I)$ (\lemref{lemBadDomains}), they must be time-ordered.
    The proposition analyzes the case when $X_j <_t f^{-i}(Z)$. 

    The point of property (Q3) is that, if $X_j$ is a bad domain for
    $\mu_I$, then we can improve $\mu_I$ in $X_j, f(X_j), \ldots
    f^{L_{X_j}}(X_j)$ simultaneously. This process also eliminates all bad
    domains of type specified by (Q4). However, during this process, a new
    bad domain $Z$ which was not a  bad domain for $\mu_I$ may have been
    created. Property (Q5) puts restrictions on such $Z$: namely, $Z$ must
    be a subsurface of some $f^i(X_j)$, which has strictly smaller
    complexity than that of $X_j$. If $X_1, X_2, \ldots X_n$ are all
    curves, then in particular (Q5) implies such $Z$ cannot exist and thus
    $\Omega(\mu_O, R_O) \subset \Omega(\mu_I,R_I)$.
    
  \end{remark}

  \begin{proof}[Proof of \propref{propBaseStep2}] 

    We will first assume that $n=1$ and set $X = X_1$. We will construct a
    marking $\mu_O$ containing \[ c = \partial X \cup \cdots \cup \partial
    f^{L_X}(X). \] as base curves, guaranteeing (Q2). The situation may
    seem similar to case II of \propref{propBaseStep1}, but to ensure (Q3),
    it will not be enough to construct $\mu_O$ by inducing $\mu_I$ on each
    $f^i(X)$. We will in fact need the full work of \propref{propBaseStep1}
    to construct a marking on $X$. The action of $f$ will then extend this
    marking to each $f^i(X)$. We will consider two cases, when $X$ is a
    curve or when $X$ a non-annular subsurface. The two cases are pretty
    much the same, but for clarity, we treat them separately. After we
    explain how to construct $\mu_O$ and $R_O$ in each case, we will then
    check that they satisfy the proposition.
    
    First suppose $X$ is a curve. On each component domain of $(S,c)$, we
    put the induced marking coming from $\mu_I$. To complete this into a
    marking, we need to pick a transversal to each $f^i(X)$. Much like as
    in the proof of \propref{propBaseStep1}, we have a quadrilateral in
    $\mathcal{C}(X)$ formed by projecting the main geodesics $\mu_B$,
    $f\mu_B$, $\mu_I$, and $f\mu_I$ to $\mathcal{C}(X)$. Since the pair of
    geodesics $\big[ \mu_B, \mu_I \big]_X$ and $\big[ f\mu_B, f\mu_I
    \big]_X$ $2\delta$--fellow travel in $\mathcal{C}(X)$, we can find an
    element $b \in \mathcal{C}(X)$ such that 
    \begin{itemize} 
       \item $d_X \big(b, f(b)\big) \le 2 \delta$. 
       \item $d_X(b, \mu_B) \le d_X(\mu_B, f\mu_B) + 2\delta$.
    \end{itemize} 
    Let $f^i(b)$ be the transversal to $f^i(X)$ and let $\mu_O$ be the
    associated clean marking on $S$. The correct constant will be $R_O =
    R_I+2M_3$.

    Now suppose $X$ is a non-annular domain. Let $F = f^{L_X + 1} : X \to
    X$ be the first return map of $f$ to $X$. Set \[ R_I' = N R_I + 2M_3.
    \] Let $\nu_B = \Pi_X(\mu_B)$ and $\nu_I = \Pi_X(\mu_I)$ be
    respectively the induced markings of $\mu_B$ and $\mu_I$ on $X$. We will
    regard $\nu_B$ as the base marking in $\operatorname{Mark}(X)$. Since
    $\mu_I \in \wfix_{R_I}(f)$, for any $Z \subseteq X$,
    \begin{align*} 
      d_Z(\nu_I, F\nu_I) 
      & \le d_Z(\mu_I, F\mu_I) + 2M_3 \\
      & = d_Z(\mu_I, f^{L_X+1}\mu_I) + 2M_3 \\
      & \le \sum_{i=0}^{L_X} d_Z(f^i\mu_I, f^{i+1}\mu_I) + 2M_3\\
      & = (L_X+1) \, d_Z(\mu_I, f\mu_I) + 2M_3\\
      & \le (L_X+1)R_I + 2M_3 \\
      & <  R_I'.
    \end{align*}
    In other words, $\nu_I \in \wfix_{R_I'}(F)$. By Equation
    \eqref{eqInduced}, we have 
    \begin{align*}
      d_X(\mu_B, \mu_I) 
      & \le d_X(\mu_B, \nu_B) + d_X(\nu_B, \nu_I) +
      d_X(\nu_I, \mu_I) \\
      & \le d_X(\nu_B, \nu_I) + 2M_3
    \end{align*}
    Using above inequality and the fact that $X \in \Omega(\mu_I,R_I)$,
    we obtain
    \begin{align*}
      d_X(\nu_B, \nu_I) 
      & \ge d_X(\mu_B, \mu_I) - 2M_3 \\
      & > N \max_{0\le i \le L_X} \big\{ d_{f^i(X)}(\mu_B, f\mu_B) \big\} +
      NR_I
      + \Theta - 2M_3 \\
      & > \sum_{i=0}^{L_x} d_{f^{-i}(X)}(\mu_B, f\mu_B) + NR_I + \Theta - 2M_3 \\
      & = \sum_{i=0}^{L_X} d_X(f^i \mu_B, f^{i+1}\mu_B) + NR_I + \Theta - 2M_3 \\
      & \ge d_X(\mu_B, f^{L_X+1}\mu_B) + NR_I + \Theta - 2M_3 \\
      & \ge d_X(\nu_B, F\nu_B) + NR_I + \Theta - 4M_3 \\
      & \ge d_X(\nu_B, F\nu_B) + R_I'.
    \end{align*}
    In other words, $X \in \Omega(\nu_I, R_I', F)$. We may apply
    \propref{propBaseStep1}, treating $X$ as the whole surface. This gives
    a marking $\nu_O$ on $X$ and a constant $R_O' \ge R_I'$ depending only
    on $R_I'$ (hence $R_I$) such that 
    \begin{itemize}

      \item[(P1)] For any $Z \subseteq X$, 
        \begin{equation}\label{eqBS2a}
          d_Z(\nu_O, F\nu_O) \le R_O'.
        \end{equation}

      \item[(P2)] $X \notin \Omega(\nu_O, R_O', F)$, meaning
        \begin{equation}\label{eqBS2b}
          d_X(\nu_B, \nu_O) \le d_X(\nu_B, F\nu_B) + R_O'. 
        \end{equation}
    \end{itemize} 
    The action of $f$ induces a marking $f^i \nu_O$ on each
    $f^i(X)$. We complete \[ c \cup \displaystyle \bigcup_{i=0}^{L_X} f^i \nu_O
    \] to a marking $\mu_O$ on $\s$ by extending $\mu_I$ to the remaining
    complements and the curves in $c$. In this case, set $R_O = R_O' +
    2M_3$. 
    
    Now, for $X$ is either a curve or a non-annular domain, let $\mu_O$ and
    $R_O$ be the appropriate marking and constant. We will show $\mu_O$ and
    $R_O$ satisfy properties (Q1), (Q3), (Q4) and (Q5). Let's consider the
    following analyses.  
    \begin{itemize}
    
      \item[(c1)] If $Z = \s$, by assumption, $\s \notin \Omega(\mu_I, R_I)$,
      so \[ d_\s (\mu_B, \mu_I) \le d_\s(\mu_B, f\mu_B) + R_I. \] Since $X$
      is a domain of a geodesic in $H(\mu_B, \mu_I)$, we have
      \begin{align*}
        d_\s(\mu_B, \mu_O) 
        & \le d_\s(\mu_B, c) + 2 \\
        & \le d_\s(\mu_B, \mu_I) + M_3 + 2 \\
        & \le d_\s(\mu_B, f\mu_B) + R_I + M_3 +2 \\ 
        & \le d_\s(\mu_B, f\mu_B) + R_O.
      \end{align*}
      In particular, $S \notin \Omega(\mu_O,R_O)$. As in (b1) of case II in
      \propref{propBaseStep1}, we also have \[ d_\s(\mu_O, f\mu_O) \le 4.
      \]

      \item[(c2)] If $Z \ne \s$ but some curve of $c$ crosses $Z$, then the
      same argument of (b2) of \propref{propBaseStep1} applies to give \[
      d_Z(\mu_B, \mu_O) \le d_Z(\mu_B, \mu_I) + M_3 + 4, \] and \[
      d_Z(\mu_O, f\mu_O) \le 8. \]

      \item[(c3)] If $Z$ is a subsurface of some component domain of $(\s,
      c)$ on which $\mu_O$ is induced from $\mu_I$ (this includes the
      possibility that $Z$ is a curve in $c$ when $X$ is not a curve),
      then, as in (b3) of \propref{propBaseStep1}, \[ d_Z(\mu_B, \mu_O) \le
      d_Z(\mu_B, \mu_I) + M_3, \] and \[ d_Z(\mu_O, f\mu_O) \le R_I + 2M_3
      \le R_O. \]

      \item[(c4)] If $X$ is a curve, then by construction 
      \begin{align*}
         d_X(\mu_B, \mu_O) 
         & \le d_X(\mu_B, b) + M_3 \\ 
         & \le d_X(\mu_B, f\mu_B) + 2\delta + M_3 \\
         & \le d_X(\mu_B, f\mu_B) + R_O,
      \end{align*}
      and
      \begin{align*}
         d_X(\mu_O, f\mu_O) 
         & \le d_X(b, f(b)) + 2M_3 \\
         & \le 2 \delta + 2M_3 \\ 
         & \le  R_O.
      \end{align*}
      
      If $X$ is non-annular,
      and $Z \subseteq X$, then it follows from \eqref{eqBS2a} that
      \begin{align*}
         d_Z(\mu_O, f\mu_O) 
         & \le d_Z(\nu_O, F\nu_O) + 2M_3 \\
         & \le R_O' + 2M_3 \\ 
         & = R_O. 
      \end{align*}
        
      Finally, \eqref{eqBS2b} yields
      \begin{align}\label{eqBS2d}
        d_X(\mu_B, \mu_O) 
        & \le d_X(\nu_B, \nu_O) + 2 M_3 \nonumber \\
        & \le d_X(\nu_B, F\nu_B) + R_O'+2M_3 \nonumber \\
        & \le d_X(\mu_B, f^{L_X+1}\mu_B) + R_O' + 4M_3 \nonumber \\
        & \le N \max_{0 \le i \le L_X} \big\{ d_{f^i(X)}(\mu_B, f\mu_B) \big\}
        + NR_O + \Theta.
      \end{align}

      One consequence here is that, whether or not $X$ is a curve, $X
      \notin \Omega(\mu_O, R_O)$.

      \item[(c5)] If $X$ is a curve and $0 < i \le L_X$, since both $\mu_O$ and
      $f\mu_O$ contain $f^i(b)$ (as a transversal), they are $M3$--close to
      $f^i(b)$ in $\mathcal{C}\big(f^i(X)\big)$. Hence \[ d_{f^i(X)} (\mu_O,
        f\mu_O) \le
        d_{f^i(X)}(f^i(b), f^i(b)) + 2M_3
      \le R_O. \] Furthermore, 
      \begin{align*}
         d_{f^i(X)}(\mu_B, \mu_O) 
         & \le d_{f^i(X)}\big(\mu_B, f^i(b)\big) + M_3 \\
         & \le d_X(f^{-i}\mu_B, b) + M_3 \\
         & \le d_X(\mu_B, b) + d_X(\mu_B, f^{-i}(\mu_B)) + M_3 \\
         & \le d_X(\mu_B, b) + \sum_{j=0}^{i-1} d_X(f^{-j}\mu_B,
         f^{-(j+1)}\mu_B) + M_3 \\ 
         & = d_X(\mu_B, f\mu_B) + 2\delta + \sum_{j=0}^{i-1}
         d_{f^{j+1}(X)}(\mu_B, f\mu_B) + M_3 \\
         & = \sum_{j=0}^{i} d_{f^j(X)}(\mu_B, f\mu_B) + 2\delta + M_3
         \\
        & \le N \max_{0\le j\le L_X} \big\{ d_{f^j(X)}(\mu_B, f\mu_B)\big\} +
          R_O.
      \end{align*}
      
      If $X$ is non-annular, and $Z \subseteq f^i(X)$, $0< i \le L_X$, then \[
      d_Z(\mu_O, f\mu_O) = d_Z(f^i\nu_O, f^i\nu_O) + 2M_3 \le R_O. \] 
        
      For $f^i(X)$, $0< i \le L_X$, then 
      \begin{align*}
        d_{f^i(X)}(\mu_B, \mu_O) 
        & \le d_{f^i(X)}(\mu_B, f^i\nu_O) + M_3 \\
        & = d_X(f^{-i}\mu_B, \nu_O) + M_3\\
        & \le d_X(\mu_B, \nu_O) + \sum_{j=0}^{i-1} d_X(f^{-j} \mu_B,
        f^{-(j+1)}\mu_B)\\
        \text{By } \eqref{eqBS2d} \quad
        & \le 2N \max_{0\le j\le L_X} \big\{ d_{f^j(X)}(\mu_B, f\mu_B)\big\} +
        NR_O + \Theta
      \end{align*} 
     
     An consequence here is that $f^i(X) \notin \Omega(\mu_O, R_O)$.
   
    \end{itemize}

    Together from (c1) to (c5), we have shown that $\mu_O \in
    \wfix_{R_O}(f)$. Property (Q3) is verified in cases (c4) and (c5). To
    see (Q5), if $Z \in \Omega(\mu_O, R_O)$, then $Z$ is either of case
    (c2), (c3), or $Z \subsetneqq f^i(X)$, for some $i = 0, \ldots, L_X$.
    In (c2) or (c3), since \[ d_Z(\mu_B, \mu_O) \le d_Z(\mu_B, \mu_I) +
    M_3+ 4, \] it follows that 
    \[ Z \in
    \Omega(\mu_O, R_O) \quad \Longrightarrow \quad Z \in \Omega(\mu_I,
    R_I). \] To see (Q4), we use \lemref{lemTimeOrder} on the assumption
    $X <_t f^{-i}(Z)$ to obtain \[ d_{f^{-i}(Z)}( \mu_B, \partial X) \le M_1.
    \] Since $\mu_O$ contains $c = \bigcup_{j} \partial f^j(X)$ as base
    curves, we have
    \begin{align*}
      d_Z(\mu_B, \mu_O) 
      & = d_{f^{-i}(Z)}(f^{-i}\mu_B, f^{-i}\mu_O) \\
      & \le d_{f^{-i}(Z)}(f^{-i}\mu_B, \mu_B) + d_{f^{-i}(Z)}(\mu_B,
      f^{-i}\mu_O) \\
      & \le N \max_{0 \le i \le L_Z} \big\{ d_{f^i(Z)}(\mu_B, f\mu_B) \big\} +
      d_{f^{-i}(Z)}(\mu_B, \partial X) + M_3 \\
      & \le N \max_{0 \le i \le L_Z} \big\{ d_{f^i(Z)}(\mu_B, f\mu_B) \big\} +
      M_1 + M_3 
    \end{align*}
    Therefore, $Z \notin \Omega(\mu_O, R_O)$. This concludes (Q4) and the
    proof in the case $n=1$.

    In the case that $n > 1$, the proof is essentially the same. By
    re-indexing if necessary, we may assume $X_1, \ldots X_m$, $m \le n$,
    are non-annular domains, and $X_{m+1}, \ldots, X_n$ are curves. The
    assumption that $\mathcal{U}_1, \ldots, \mathcal{U}_n$ are pairwise
    disjoint allows us to apply the proof in the case $n=1$ to all $X_j$'s
    simultaneously. More precisely, for each $1 \le j \le m$, let
    $\nu_{O,j}$ be the marking in $X_j$ coming from the proof in case
    $n=1$. Similarly, for $m+1 \le j \le n$, let $b_j$ be the transversal
    curve to $X_j$ coming from the proof in the case $n=1$. Let $c = \cup_j
    c_j$ where $c_j = \partial X_j \cup \cdots \cup f^{L_{X_j}}(X_j)$. The
    set \[ c \cup \left( \bigcup_{j=1}^{m} \bigcup_{i=0}^{L_{X_j}}
      f^i\nu_{O,j} \right) \cup \left( \bigcup_{i=1}^{L_{X_j}}
      \bigcup_{j=m+1}^n f^i(b_j) \right)
    \] can be extended to marking $\mu_O$ by extending $\mu_I$ to the
    remaining complements and the curves in $c_1, \ldots, c_m$. Let $R_O$
    be the maximum of the two constants from the proof in the case $n=1$
    (one constant for $X$ a curve and the second for $X$ a non-annular
    domain). The proof that $\mu_O$ and $R_O$ satisfy the desired
    properties (Q1) to (Q5) is the same as the proof in for $n=1$.
  \end{proof}
  
  \subsection{Reducing complexity}\label{secIteration}

  In this section, we show how to use \propref{propBaseStep1} and
  \propref{propBaseStep2} to construct $R$ and $\mu$ for
  \thmref{thmFiniteOrder}. 

  From now on, a pair $(\mu, R)$ will always mean $\mu \in \wfix_R(f)$.
    
  \begin{proposition}[Reducing complexity]\label{propReduction}

    Let $f \in \mcg$ be of finite order. Let $R_I$ and $\mu_I$ be as in
    \propref{propBaseStep1}. Suppose $\Omega(\mu_I, R_I) \ne \emptyset$.
    There exists $R_O$, depending only on $R_I$, and $\mu_O \in
    \wfix_{R_O}(f)$ such that \[ \xi(\mu_O, R_O) < \xi(\mu_I, R_I). \]

  \end{proposition}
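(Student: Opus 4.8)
The plan is to run the base steps of \secref{secBaseStep} iteratively: \propref{propBaseStep1} takes care of the case where $\s$ is a bad domain, and \propref{propBaseStep2} is applied repeatedly otherwise. If $\s \in \Omega(\mu_I, R_I)$, then $\xi(\mu_I, R_I) = \xi(\s)$, and \propref{propBaseStep1} directly produces $R_O$ (depending only on $R_I$) and $\mu_O \in \wfix_{R_O}(f)$ with $\xi(\mu_O, R_O) \lneqq \xi(\mu_I, R_I)$ --- this is exactly property (P2). So from now on assume $\s \notin \Omega(\mu_I, R_I)$ and write $\xi = \xi(\mu_I, R_I)$, so $\xi < \xi(\s)$.

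Starting from $(\mu^{(0)}, R^{(0)}) = (\mu_I, R_I)$, I would build a sequence $(\mu^{(k)}, R^{(k)})$ inductively, maintaining that $\mu^{(k)} \in \wfix_{R^{(k)}}(f)$, that $R^{(k)} \ge R_I$ depends only on $R_I$ and $\s$, and that $\s \notin \Omega(\mu^{(k)}, R^{(k)})$. At stage $k$: if $\Omega(\mu^{(k)}, R^{(k)})$ contains no domain of complexity $\xi$, stop. Otherwise choose a $\vartriangleleft$-maximal element $X_k$ of $\Omega(\mu^{(k)}, R^{(k)})$; by rule (O1) it has complexity $\xi$, and it is a proper domain because $\xi < \xi(\s)$. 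Apply \propref{propBaseStep2} with this $X_k$ to obtain $(\mu^{(k+1)}, R^{(k+1)})$. Property (Q1) gives $\mu^{(k+1)} \in \wfix_{R^{(k+1)}}(f)$; the explicit formulas for $R_O$ in \propref{propBaseStep2} show $R^{(k+1)} \ge R^{(k)}$ and depends only on $R^{(k)}$; and case (c1) in the proof of \propref{propBaseStep2} gives $\s \notin \Omega(\mu^{(k+1)}, R^{(k+1)})$. Thus the invariants persist, and in particular the hypotheses of the base steps continue to hold.

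Termination follows from a strict descent. The number of complexity-$\xi$ domains in $\Omega(\mu^{(k)}, R^{(k)})$ is finite: each such domain is proper, so by \lemref{lemBadDomains} (valid since $\mu^{(k)} \in \wfix_{R^{(k)}}(f)$ with $R^{(k)} \ge R_1$) it supports a geodesic in $H(\mu_0, \mu^{(k)})$, and a hierarchy has only finitely many geodesics. This count is strictly decreasing: by (Q5) any $Z \in \Omega(\mu^{(k+1)}, R^{(k+1)}) \setminus \Omega(\mu^{(k)}, R^{(k)})$ is a proper subsurface of some $f^i(X_k)$, hence has complexity strictly below $\xi(X_k) = \xi$, so every complexity-$\xi$ member of $\Omega(\mu^{(k+1)}, R^{(k+1)})$ already belonged to $\Omega(\mu^{(k)}, R^{(k)})$; meanwhile (Q3) ejects $X_k = f^0(X_k)$. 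Iterating (Q5) also shows the maximal complexity occurring in $\Omega(\mu^{(k)}, R^{(k)})$ never exceeds $\xi$. Hence the process halts at some stage $n$ with $\Omega(\mu^{(n)}, R^{(n)})$ free of complexity-$\xi$ domains, so $\xi(\mu^{(n)}, R^{(n)}) \lneqq \xi = \xi(\mu_I, R_I)$; take $(\mu_O, R_O) = (\mu^{(n)}, R^{(n)})$.

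The step I expect to be the real obstacle is the clause ``$R_O$ depends only on $R_I$''. Since $R^{(k+1)}$ is built from $R^{(k)}$ by a fixed rule, $R_O = R^{(n)}$ depends only on $R_I$ exactly when the number $n$ of iterations --- at most the initial number of complexity-$\xi$ bad domains of $\mu_I$ --- is bounded by a constant depending only on $\s$. Establishing such a uniform bound requires a combinatorial input beyond the base steps: the complexity-$\xi$ bad domains are pairwise disjoint or interlocking (never nested, having equal complexity), the interlocking ones among them are totally time-ordered in $H(\mu_0, \mu_I)$, and each carries the unusually large projection $d_Y(\mu_0, \mu_I)$ prescribed by \eqref{eqBD2}; this configuration data has to be turned into an a priori bound on their number. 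Apart from this point, what remains is the routine bookkeeping indicated above.
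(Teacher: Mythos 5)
You have the right ingredients (the two base steps, (Q3)/(Q5) for descent) and you have correctly located the weak point, but the gap you flag at the end is genuine and is precisely what the paper's proof is organized to avoid. Your termination argument bounds the number of iterations by the number of complexity-$\xi$ bad domains of $\mu_I$, and the only control you establish on that number is that each such domain supports a geodesic in $H(\mu_0,\mu_I)$. That is finiteness, not uniformity: the number of geodesics in a hierarchy grows with $d_M(\mu_0,\mu_I)$, so your iteration count $n$ depends on $f$ and $\mu_I$, and hence so does $R_O=R^{(n)}$. Since the whole point of the proposition is that $R_O$ depends only on $R_I$ (this is what lets \corref{corTermination} conclude that the final constant depends only on $\s$), the argument as written does not prove the statement.

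The paper's fix is to do only \emph{one} application of the base step, not an unbounded iteration. One chooses $\vartriangleleft$-maximal domains $X_1, X_2, \ldots$ of complexity $\xi$ greedily, each supported in the complement of the orbits $\mathcal{U}_j = X_j \cup \cdots \cup f^{L_{X_j}}(X_j)$ of the previously chosen ones, until the complement carries no further complexity-$\xi$ bad domains. Because the $\mathcal{U}_i$ are pairwise disjoint subsurfaces of fixed complexity $\xi$, their number $n$ is bounded by a constant depending only on $\s$ (an Euler-characteristic/pants-count bound), and \propref{propBaseStep2} is then applied to all of them \emph{simultaneously}, producing a single pair $(\mu_O,R_O)$ with $R_O$ a fixed function of $R_I$. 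The complexity-$\xi$ bad domains that were \emph{not} chosen are then disposed of not by further iterations but by (Q4): any surviving $Z$ of complexity $\xi$ must interlock some $f^j(X_i)$ with $i$ minimal, and the $\vartriangleleft$-maximality of $X_i$ (which by (O2) makes $X_i$ time-order minimal in $H(\mu_0,\mu_I)$) forces $X_i <_t f^{-j}(Z)$, so (Q4) ejects $Z$; domains disjoint from all the $\mathcal{U}_i$ have complexity below $\xi$ by the exhaustion property, and the newly created bad domains are handled by (Q5) as you say. So the ``combinatorial input'' you correctly anticipate is the disjointness bound on the chosen family plus the use of (Q4), and it is deployed by restructuring the argument into a single simultaneous step rather than by bounding your iteration.
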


  \begin{proof}

    If $\s \in \Omega(\mu_I, R_I)$, then \propref{propBaseStep1} produces
    $(\mu_O, R_O)$ such that $R_O$ depends only on $R_I$ and $\s \notin
    \Omega(\mu_O, R_O)$, hence $\xi(\mu_O, R_O) < \xi(\s) = \xi(\mu_I, R_I)$. 

    Now suppose $\s \notin \Omega(\mu_I, R_I)$. Choose a maximal element
    $X_1 \in \Omega(\mu_I, R_I)$. This in particular means $X_1$ has
    maximal complexity over all elements of $\Omega(\mu_I, R_I)$. Set $
    \mathcal{U}_1 = X_1 \cup \cdots \cup f^{L_{X_1}}(X_1).$  Consider the
    maximal complexity of the elements in $\Omega(\mu_I, R_I)$ supported on
    $\s \setminus \mathcal{U}_1$. If this complexity is strictly less than
    $\xi(X_1)$, then we stop. If this complexity is not strictly less than
    $\xi(X_1)$, then we may choose $X_2$ of maximal order in $\Omega(\mu_I,
    R_I)$ supported on $\s \setminus \mathcal{U}_1$ such that
    $\xi(X_2)=\xi(X_1)$. Set $\mathcal{U}_2 = X_2 \cup \cdots \cup
    f^{L_{X_2}}(X_2)$. In this case, $\mathcal{U}_1$ and $\mathcal{U}_2$
    are disjoint. Now we repeat this process by considering the maximum
    complexity of the elements in $\Omega(\mu_I, R_I)$ supported on $\s
    \setminus ( \mathcal{U}_1 \cup \mathcal{U}_2 )$. Continuing this way,
    we eventually \emph{exhaust} $\s$ by a sequence \[ \mathcal{U}_1,
    \mathcal{U}_2, \ldots, \mathcal{U}_n \] in the following sense:
    \begin{itemize}
        
      \item For each $i$, the set $\mathcal{U}_i$ is a disjoint union of
        subsurfaces of \s of the form \[ \mathcal{U}_i =  X_i \cup \cdots
        \cup f^{L_{X_i}}(X_i), \] with $\xi(X_i) = \xi(X_1)$.

      \item The sets $\mathcal{U}_1, \ldots, \mathcal{U}_n$ are pairwise
        disjoint.

      \item The maximal complexity of the bad domains in $\Omega(\mu_I,
        R_I)$ supported on \[ \s \setminus (\mathcal{U}_1 \cup \dots \cup
        \mathcal{U}_n) \] is strictly less than $\xi(X_1)$.
        
    \end{itemize}
    
    Note that the exhaustion sequence has length $n$ which is bounded
    uniformly by a constant depending only on $\s$. Denote by \[ c_i =
    \partial X_i \cup \cdots \cup \partial f^{L_{X_i}}(X_i). \] By
    assumption, $\mathcal{U}_1, \ldots, \mathcal{U}_n$ are pairwise
    disjoint, so we can apply \propref{propBaseStep2} to construct a pair
    $(\mu_O, R_O)$ with $\mu_O$ containing $c_1 \cup \cdots \cup c_n$ as
    base curves and $R_O$ depending only on $R_I$. By properties (Q3),
    (Q4), and (Q5) of \propref{propBaseStep2}, if $Z \in \Omega(\mu_O,
    R_O)$, then either
    \begin{itemize}
      
      \item[(i)] $Z$ intersects some curve in $c_i$ for some $i$. (See 
      case (c2) in the proof of \propref{propBaseStep2})
        
      \item[(ii)] $Z$ is supported on $\s \setminus (\mathcal{U}_1 \cup
      \dots \cup \mathcal{U}_n).$ (See case (c3) in the proof of
      \propref{propBaseStep2}.)

      \item[(iii)] $Z \subsetneqq f^j(X_i)$, for some $0 \le i \le n$ and
        $0 \le j \le L_{X_i}$. 
        
    \end{itemize} Immediately, case (iii) has $\xi(Z) < \xi(X_1)$. Recall
    that for either case (i) or (ii), \[ Z \in \Omega(\mu_O, R_O) \qquad
    \Longrightarrow \qquad Z \in \Omega(\mu_I, R_I).  \] Since $\s$ is
    exhausted by assumption, case (ii) also means $\xi(Z) < \xi(X_1)$.
    Lastly, suppose $Z$ is of case (i). Choose the minimal index $i$ such
    that $Z$ intersects a curve in $c_i$. In other words, $Z$ is supported
    on \[ \s \setminus (\mathcal{U}_1 \cup \cdots \cup \mathcal{U}_{i-1}),
    \] and $Z$ interlocks $f^j(X_i)$, for some $j$. Our choice of $X_i$ has
    maximal order among the bad domains supported on \[ \s \setminus
    (\mathcal{U}_1 \cup \cdots \cup \mathcal{U}_{i-1}). \] Therefore, if
    $\xi(Z) = \xi(X_1) = \xi(X_i) $, then $X_i  <_t f^{-j}(Z)$ in $H(\mu_B,
    \mu_I)$. Property (Q5) of \propref{propBaseStep2} guarantees that such
    domains do not appear in $\Omega(\mu_O, R_O)$. Thus, any $Z$ of case
    (i) must also have $\xi(Z) < \xi(X)$.  
  \end{proof}

  Let $R_1$ be the minimal constant satisfying \lemref{lemFixedPoints}.

  \begin{corollary}[Termination]\label{corTermination}

    There exists $R \ge R_1$ depending only on $\s$ such that any finite order
    $f \in \mcg$ has a $\mu \in \wfix_R(f)$ satisfying $\Omega(\mu,R)
    = \emptyset$.
    
  \end{corollary}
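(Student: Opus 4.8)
The plan is to iterate \propref{propReduction}, starting from an arbitrary symmetric point, and to use the strict decrease of the complexity $\xi(\,\cdot\,)$ to force the process to stop; all the substance has already been carried out in the earlier propositions, so the corollary is essentially bookkeeping on top of \propref{propReduction}.

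First I would fix the starting data. Enlarging $R_1$ if necessary (this keeps the dependence on \s alone, since $\delta$ depends only on \s), we may assume $R_1 \ge 2\delta + 4$, so that $R_1$ is an admissible input for \propref{propBaseStep1}, hence for \propref{propReduction}. By \lemref{lemFixedPoints} the set $\wfix_{R_1}(f)$ is nonempty, so I would pick $\mu^{(1)} \in \wfix_{R_1}(f)$ and set $R^{(1)} = R_1$.

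Next I would run the iteration. Given a pair $(\mu^{(i)}, R^{(i)})$ with $\mu^{(i)} \in \wfix_{R^{(i)}}(f)$ and $R^{(i)} \ge R_1$, stop if $\Omega(\mu^{(i)}, R^{(i)}) = \emptyset$; otherwise apply \propref{propReduction} with $R_I = R^{(i)}$ and $\mu_I = \mu^{(i)}$ (the required hypothesis $R^{(i)} \ge \max\{2\delta+4, R_1\}$ holds) to obtain a constant $R^{(i+1)} \ge R^{(i)}$, depending only on $R^{(i)}$, and a marking $\mu^{(i+1)} \in \wfix_{R^{(i+1)}}(f)$ with
\[
   \xi\big(\mu^{(i+1)}, R^{(i+1)}\big) \lneqq \xi\big(\mu^{(i)}, R^{(i)}\big).
\]

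Finally I would argue termination and uniformity of the constant. By \defref{defComplexity} the complexity $\xi(\mu, R)$ is an integer lying in the finite set $\{-2, -1, 0, 1, \ldots, \xi(\s)\}$, and it equals $-2$ precisely when $\Omega(\mu,R) = \emptyset$. Since $\xi(\mu^{(i)}, R^{(i)})$ strictly decreases along the iteration, the process halts after at most $\xi(\s)+3$ steps, producing some index $m$ with $\Omega(\mu^{(m)}, R^{(m)}) = \emptyset$; since each $R^{(i+1)}$ depends only on $R^{(i)}$ and the number of steps is bounded in terms of $\xi(\s)$ alone, the final constant $R = R^{(m)}$ depends only on \s. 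Then $\mu = \mu^{(m)} \in \wfix_R(f)$ satisfies $\Omega(\mu, R) = \emptyset$, as required. I do not expect a genuine obstacle here: the only points to be careful about are that the hypothesis $R^{(i)} \ge \max\{2\delta + 4, R_1\}$ is preserved along the iteration (automatic, since the $R^{(i)}$ are nondecreasing) and that the number of iterations is controlled independently of $f$ so that the resulting $R$ is uniform.
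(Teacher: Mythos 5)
Your proposal is correct and is essentially identical to the paper's own proof: iterate \propref{propReduction} starting from a point of $\wfix_{R_1}(f)$, use the strict decrease of $\xi(\mu_i,R_i)$ to terminate in at most $\xi(\s)+2$ steps, and note that each $R_{i+1}$ depends only on $R_i$ so the final constant depends only on $\s$. Your extra care in enlarging $R_1$ so that $R_1 \ge 2\delta+4$ and in checking that the $R^{(i)}$ are nondecreasing is a welcome (and harmless) tightening of details the paper leaves implicit.
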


  \begin{proof}

    Let $\mu_1 \in \wfix_{R_1}(f)$. If $\Omega(\mu_1, R_1) = \emptyset$,
    then we are done. If not, then applying \propref{propReduction}
    iteratively yields a sequence of pairs $(\mu_1, R_1)$, $(\mu_2, R_2)$,
    $\ldots$ such that \begin{itemize}
      
      \item $\mu_{i+1} \in \wfix_{R_{i+1}}(f)$, where $R_{i+1}$ depends
      only on $R_i$.

      \item $\xi_{i+1} \lneqq \xi_i$, where $\xi_i = \xi(\mu_i, R_i)$.

    \end{itemize}
    Since $\xi_i$ corresponds to the maximum complexity over elements in
    $\Omega(\mu_i, R_i)$, and that $\xi_i$'s are strictly decreasing, we
    must eventually reach a pair $(\mu_n, R_n)$ for which $\xi_n = -2$,
    i.e. $\Omega(\mu_n, R_n) = \emptyset$. Moreover, since elements of
    $\Omega(\mu_i, R_i)$ come from subsurfaces of $\s$, $\xi_1 \le  \xi(\s)
    = 3g- 3+b$. This gives a bound on $n \le 3g-1+b$, and therefore $R_n$
    depends only on $\s$.
  \end{proof}

  This concludes the proof of \thmref{thmFiniteOrder}.
    
\section{L.B.C.~property for reducible mapping classes}
  
  \label{secReducible} 

  In this section, we prove L.B.C.~property for (infinite-order) reducible
  elements of \mcg. We would like to use an induction argument on
  subsurfaces. To do so, we make the following observation. 
  
  Let $f \in \mcg$ be reducible with canonical reducing system $\sigma$
  (see \secref{secCanonical}). If $\mu \in \mg$ is a marking containing
  $\sigma$ as base curves, then so does $f\mu$. This means that any
  hierarchy $H(\mu,f\mu)$ decomposes into geodesics supported on component
  domains of $(\s, \sigma)$. For such a marking $\mu$, we can control each
  component domain independently, allowing the arguments of \thmref{thmPA}
  and \corref{corFiniteOrder} to pass through to subsurfaces. This inspires
  the definition of a good marking for $f$ (\defref{defGoodMarking}). We
  construct a finite collection of good markings and prove
  \thmref{introthmReducible} of the introduction. The induction argument on
  subsurfaces using good markings appears in \propref{propBaseCase}.
  Finally, \corref{corReducible} combines the finiteness and the induction
  argument to finish the proof of L.B.C.~property for reducible elements of
  \mcg. 
 
  \begin{definition}[Good marking] \label{defGoodMarking}
  
    Let $f \in \mcg$ be an infinite-order reducible element and let
    $\sigma$ be its associated canonical reducing system. We say a marking
    $\mu \in \mg$ is a \emph{good marking} for $f$ if $\sigma \subseteq
    \base(\mu)$. 
  
  \end{definition}
  
  Up to homeomorphisms of \s, there are only finitely many multicurves on
  \s. If $f \in \mcg$ is reducible and $f=\omega^{-1}g\omega$, then
  $\omega(\sigma_f)=\sigma_g$, where $\sigma_f$ and $\sigma_g$ are
  canonical reducing system for $f$ and $g$ respectively. We fix a
  representative for each homeomorphism type of a multicurve. Further, for
  each representative multicurve $\sigma$, we complete $\sigma$ into a
  finite set of markings representing each homeomorphism type of a marking
  containing $\sigma$ as base curves. Let $\mathcal{M}$ the collection of
  all such \emph{representative markings}, one from each homeomorphism
  type. The following is \thmref{introthmReducible} of the introduction.

  \begin{theorem}\label{thmReducible} 
          
    There exists $a \in \mcg$ such that $a^{-1}fa$ has a good marking in
    $\m$, and \[ d_{\mg}(\mu_B, a\mu_B) \prec d_{\mg}(\mu_B, f\mu_B). \]
    Furthermore, if $f$ and $g$ are conjugate and $b^{-1}gb=a^{-1}fa$ with
    $d_{\mg}(\mu_B, b\mu_B) \prec d_{\mg}(\mu_B, g\mu_B)$, then we may choose the
    same good marking in $\m$ for $b^{-1}gb$.
    
  \end{theorem}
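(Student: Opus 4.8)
The plan is to reduce the statement to the construction, for every infinite-order reducible $f$, of a single complete marking $\mu$ with $\sigma_f\subseteq\base(\mu)$ and $d_M(\mu_0,\mu)\prec d_M(\mu_0,f\mu_0)$, and then to carry $\mu$ into the finite set $\m$ by an isometry of $\mg$. Indeed, granting such a $\mu$: since $\m$ was built to contain a representative of every homeomorphism type of a complete marking together with a distinguished sub-multicurve, there is $h\in\mcg$ carrying $\mu$ to a marking $h\mu\in\m$ with $h(\sigma_f)\subseteq\base(h\mu)$. Put $a=h^{-1}$. By equivariance of canonical reducing systems, $\sigma_{a^{-1}fa}=h(\sigma_f)\subseteq\base(h\mu)$, so $a^{-1}fa$ has the good marking $h\mu\in\m$; and since $\m$ is finite, $d_M(\mu_0,a\mu_0)=d_M(h\mu_0,\mu_0)\le d_M(\mu_0,\mu)+\max_{\eta\in\m}d_M(\mu_0,\eta)\prec d_M(\mu_0,f\mu_0)$, as required.

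First I would construct $\mu$ as a marking extension of $\sigma_f$ relative to $\mu_0$ (\defref{defMarkingExtension}), so that $\sigma_f\subseteq\base(\mu)$ and, by \lemref{lemMarkingExtension}, $d_Z(\mu_0,\mu)\le M_3$ for every proper $Z\subsetneq\s$ whose boundary is disjoint from $\sigma_f$ and for every $Z$ that is a curve of $\sigma_f$. Applying the distance formula (\thmref{thmDF}) with threshold $L>M_3$, the only domains that can contribute to $d_M(\mu_0,\mu)$ are $\s$ and the domains $Z$ with $\pi_Z(\sigma_f)\ne\emptyset$, and for those $\sigma_f\subseteq\base(\mu)$ forces $d_Z(\mu_0,\mu)\le d_Z(\mu_0,\sigma_f)+2$ and $d_\s(\mu_0,\mu)\le d_\s(\mu_0,\sigma_f)+1$. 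Thus it remains to show
\[ d_\s(\mu_0,\sigma_f)+\sum_{\substack{Z\subsetneq\s,\ \pi_Z(\sigma_f)\ne\emptyset\\ d_Z(\mu_0,\sigma_f)\ge L}} d_Z(\mu_0,\sigma_f)\ \prec\ d_M(\mu_0,f\mu_0), \]
where the sum has only finitely many terms because its left side is $\prec d_M(\mu_0,\mu)<\infty$.

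The hard part is this last inequality, and here the fact that $\sigma_f$ is the \emph{canonical} reducing system, not merely an $f$-invariant multicurve, is essential. I would pass to a pure power $F=f^{m}$, with $m$ bounded in terms of $\s$, fixing every curve of $\sigma_f$ and every component of $\s\setminus\sigma_f$; since $d_M(\mu_0,F\mu_0)\le m\,d_M(\mu_0,f\mu_0)$ it suffices to bound the left side by $d_M(\mu_0,F\mu_0)$. Each $\gamma\in\sigma_f$ is either a boundary curve of a component on which $F$ is pseudo-Anosov — so, by \lemref{lemPA}, some surface in the bounded $F$-orbit of that component has far-apart projections of $\mu_0$ and $F\mu_0$ — or it is twisted nontrivially by $F$, so $d_\gamma(\mu_0,F\mu_0)$ is positive and becomes large after iterating $F$. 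In either case \thmref{thmBGI} forces the main geodesic of $H(\mu_0,F\mu_0)$, and the relevant subordinate geodesics, to pass within bounded distance of $\gamma$ (respectively of $\partial Z$), from which one extracts $d_Z(\mu_0,\sigma_f)\prec d_{F^{j}(Z)}(\mu_0,F\mu_0)$ for a suitable term of the orbit; summing over the finitely many relevant $Z$ via the distance formula for $d_M(\mu_0,F\mu_0)$, and noting that $Z\mapsto F^{j(Z)}(Z)$ has multiplicity $\le m$, completes the estimate. I expect the genuine difficulty to be exactly this extraction of ``nondegeneracy'' of a pure power of $f$ near each reducing curve and along every subsurface straddling $\sigma_f$.

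For the last sentence, note that conjugate $f$ and $g$ have $\mcg$-equivalent canonical reducing systems, so the good markings produced above all lie in the finite set $\m$ and, for a fixed conjugacy class, fall among only finitely many possibilities carrying homeomorphic distinguished sub-multicurves. A routine argument using this finiteness and the conjugacy-invariance of $\sigma_f$ up to the $\mcg$-action then lets one fix, for each conjugacy class, a single good marking of $\m$ common to all its elements without disturbing the length estimate; this is what feeds the inductive L.B.C.\ argument for reducible mapping classes in \propref{propBaseCase} and \corref{corReducible}.
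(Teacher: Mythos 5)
Your proposal is correct and follows essentially the same route as the paper: pass to a bounded pure power $F$ of $f$, use the characterization of the canonical reducing system (pseudo-Anosov pieces via \lemref{lemPA}, or nontrivial twisting) to force every curve of $\sigma_f$ to appear as a domain or boundary of a domain of the hierarchy of $F$, extend $\sigma_f$ to a marking whose projections to every subsurface are thereby controlled, apply the distance formula, and then conjugate into the finite set \m. The only cosmetic differences are that the paper takes the marking extension relative to $F\mu$ with $\mu \in \m$ rather than relative to $\mu_0$, and that your orbit indexing $F^{j}(Z)$ with its multiplicity count is unnecessary, since the pure power fixes $\sigma_f$ and each complementary component.
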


  Before proving \thmref{thmReducible}, we set up some notations. Fix a
  multicurve $\sigma$ on \s. Let $\stab(\sigma) < \mcg$ be the subgroup
  stabilizing $\sigma$ as a set:
  \[ \stab(\sigma) : \{\,\, h\in \mcg \,\,\, : h(\sigma) = \sigma \,\,\}.
  \] The action of $\stab(\sigma)$ on the complementary components of $\s
  \setminus \sigma$ induces an exact sequence:
  \begin{equation} \label{eqStab} 1 \longrightarrow \stab_0(\sigma)
  \longrightarrow \stab(\sigma) \stackrel{\pi}{\longrightarrow}
  \text{Finite Group} \longrightarrow 1.
  \end{equation}
  Consider the kernel $\stab_0(\sigma)$ of the above sequence. If $f \in
  \stab_0(\sigma)$, then $f$ acts on each complementary component $Y
  \subset \s \setminus \sigma$. In other words, $f$ defines an element
  $f|_Y \in \mathcal{MCG}_0(Y)$ for each $Y \subset \s \setminus \sigma$,
  where $\mathcal{MCG}_0(Y) \subset \mathcal{MCG}(Y)$ is the subgroup
  fixing $\partial Y$. We have an exact sequence: 
  \begin{equation} \label{eqStab0}
     1 \longrightarrow T_\sigma \longrightarrow \stab_0(\sigma)
     \longrightarrow \prod_{Y \in \s \setminus \sigma} \mathcal{MCG}_0(Y)
     \longrightarrow 1,
  \end{equation}
  where $T_\sigma$ is a free abelian group with basis the Dehn twists along
  curves in $\sigma$. By the classification theorem, $f|_Y$ is either
  pseudo-Anosov or has finite order. We say an element $f \in
  \stab_0(\sigma)$ is \emph{pure} if $f|_Y$ is either pseudo-Anosov or the
  identity on $Y$. The order of the finite group in \eqref{eqStab} is
  bounded by a constant $N$ depending only on \s. Thus, for any $\sigma$
  and any $f  \in \stab(\sigma)$, $f^N \in \stab_0(\sigma)$. Moreover,
  since there are only finitely many subsurfaces of \s up to homeomorphism,
  one can choose a constant for \corref{lemOrderBound} which works for \s
  and all subsurfaces of \s. Thus, there exists some universal power
  $N=N(S)$ depending only \s, such that for any reducible mapping element
  $f \in \mcg$, $f^N$ is pure.  
  
  We can characterize the canonical reducing system for a reducible mapping
  class as follows. Suppose $f \in \stab(\sigma)$. Let $g=f^N$ be pure.
  Then $\sigma=\sigma_f$ is the canonical reducing system for $f$ if for
  any $\alpha \in \sigma$, one of the following holds.
  \begin{itemize}
     
     \item[(H1)] There exists a domain $Y$ in $\s
     \setminus \sigma$ such that $\alpha$ is a boundary component of $Y$
     and $g|_Y$ is pseudo-Anosov on $Y$.
     
     \item[(H2)] There exists a domain $Z \subset \s$ such that $g|_Z$
     is a non-zero power of a Dehn twist along $\alpha$.       
  \end{itemize}
  To see this, let $\alpha \in \sigma$ be such that condition (H1) does not
  hold. Then $\alpha$ must bound two (not necessarily distinct) components
  $X$ and $Y$ of $\s \setminus \sigma$ such that $g|_X$ and $g|_Y$ are both
  the identity. In this case, let $Z=X \cup Y \cup \alpha$. Then $g|_Z$ is a
  non-zero power of Dehn twist along $\alpha$. Otherwise, the first return
  map of $f$ to $Z$ is of finite order, and one can thus obtain a smaller
  reducing system for $f$ by removing $\alpha$, contradicting minimality of
  $\sigma$. Note that this proof also implies that if $\sigma$ is a
  canonical reducing system for $f$, then $\sigma$ is also the canonical
  reducing system for any power of $f$. 
  
  In the case of (H2), it follows that for any $n \in \nn$ and any $v \in
  \mathcal{C}(\alpha)$, \[ d_\alpha\big(v, g^n(v)\big) \ge |n|. \] Compare
  this with \lemref{lemPA}. Since there are only finitely many domains of
  \s up to homeomorphism, we can choose $N_0$ depending only on \s such
  that the following hold. Let $M_2$ be the constant of
  \lemref{lemLargeLink}. For any multicurve $\sigma$ and any $g \in
  \stab_0(\sigma)$, let $Y$ be either a component of $\s \setminus \sigma$
  on which $g$ is pseudo-Anosov, or $Y$ is a curve in $\sigma$ such that
  property (H2) holds, then for any $n \ge N_0$ and any $v \in
  \mathcal{C}(Y)$, \[d_Y \big( v,g^n (v) \big) \ge M_2.\] 
  
  For any $g \in \stab_0(\sigma)$, Equation \eqref{eqUndistorted} has the
  following consequence. If $\mu \in \mg$ is a good marking for $g$, then
  \begin{equation} \label{eqUndistorted2} d_{\mg}(\mu, g\mu) \asymp \sum_{X
      \subset \s \setminus \sigma} d_{\Mark(X)}
     (\mu, g|_X \mu) + \sum_{\alpha \in \sigma} d_\alpha(\mu, g\mu),
  \end{equation}
  where
  \[ d_{\Mark(X)}(\mu, g|_X \mu) := d_{\Mark(X)} \big( \Pi_X(\mu), g|_X
  \Pi_X(\mu) \big) \asymp d_{\Mark(X)} \big( \Pi_X(\mu), \Pi_X(g\mu) \big).
  \] 
  
  We will say an element $g \in \stab_0(\sigma)$ \emph{does not twist}
  along $\alpha \in \sigma$ if for any $v \in \mathcal{C}(\alpha)$, \[
  \lim_{n \to \infty} \frac{d_\alpha(v, g^n(v))}{n} = 0. \] In this case,
  $d_\alpha(\mu,g\mu) \prec 1$, and one can ignore the second summand on
  the right hand side of Equation \eqref{eqUndistorted2}.
   
  \begin{proof}[Proof of \thmref{thmReducible}]
    
    The set $\m$ is finite and each conjugacy class of $\mcg$ has a good
    marking in $\m$ by construction. Let \[ C = \max\{\,\, d_{\mg}(\mu_B,\mu)
    \,\,\, : \,\,\, \mu \in \m \,\,\}. \] 
    
    Let $f \in \mcg$ be reducible with canonical reducing system $\sigma_f$
    and set $F = f^{N_0}$. Let $\mu \in \m$ be arbitrary. By our definition
    of $N_0$, for any $\alpha \in \sigma_f$, $\alpha$ is either a boundary
    curve of a domain $Y$ such that $d_Y(\mu, F\mu) \ge M_2$, or
    $d_\alpha(\mu, F\mu) \ge M_2$. Thus, by \lemref{lemLargeLink}, $\alpha$
    is either a domain for a geodesic in $H(\mu,F\mu)$ or is a boundary
    curve of a domain for a geodesic in $H(\mu, F\mu)$. A consequence is
    that, for any $Y \subset \s$ that intersects a curve $\alpha \in
    \sigma_f$, by choosing a hierarchal slice containing $\alpha$ and using
    \lemref{lemTriangle}, we have 
    \begin{align} \label{eqTri}
      d_Y(\alpha, F\mu) \prec d_Y(\mu, F\mu).
    \end{align}

    Now let $\mu'$ be a marking extension of $\sigma_f$ relative to $F\mu$.
    By \lemref{lemMarkingExtension}, for any component domain $Y$ of $(\s,
    \sigma_f)$, $d_Y(\mu',F\mu)$ is uniformly bounded. On the other hand,
    if $Y \subseteq \s$ is any domain that intersects some curve $\alpha
    \in \sigma_f$, then by \eqref{eqTri} and the fact that $\sigma_f
    \subseteq \base(\mu')$, we have
    \begin{align*}
      d_Y(\mu', F\mu) \le d_Y(\alpha, F\mu) + 2 \prec d_Y(\mu, F\mu) 
    \end{align*}
    By ranging over all $Y \subseteq \s$ on which $d_Y(\mu', F\mu)$ is
    sufficiently large, we obtain 
    \[ d_{\mg}(\mu', F\mu) \prec d_{\mg}(\mu, F\mu). \]
    This implies:
    \begin{align*} 
      d_{\mg}(\mu, \mu') 
      &\le d_{\mg}(\mu, F\mu) + d_{\mg}(F\mu, \mu') \\
      &\prec d_{\mg}(\mu, F\mu) \\
      &\prec d_{\mg}(\mu, f\mu).
    \end{align*}
    Let $\sigma$ be the representative multicurve for $\sigma_f$ and let
    $\m(\sigma) \subset \m$ be the subset of markings containing $\sigma$
    as base curves. Now choose a marking $\mu'' \in \m(\sigma)$ such that
    there exist $a \in \mcg$ with $a(\mu'') = \mu'$. By construction,
    $a^{-1}fa$ has canonical reducing system $a^{-1}(\sigma_f) = \sigma
    \subseteq \base(\mu'')$. We also have
    \begin{align*}
      d_{\mg}(\mu_B, a\mu_B) 
      & \le d_{\mg}(\mu_B, a \mu'') + d_{\mg}(a\mu'', a\mu_B) \\ 
      & \le d_{\mg}(\mu_B, \mu') + C \\ 
      & \le d_{\mg}(\mu_B, \mu) + d_{\mg}(\mu, \mu') + C\\ 
      & \prec d_{\mg}(\mu, f\mu) + 2C\\
      & \le d_{\mg}(\mu, \mu_B) + d_{\mg}(\mu_B, f\mu_B) + d_{\mg}(f\mu_B, f\mu) + 2C\\
      & \le d_{\mg}(\mu_B, f\mu_B) + 4C.
    \end{align*}
    If $g \in \mcg$ is conjugate to $f$, then $\sigma$ would also be the
    representative multicurve for $\sigma_g$. Our construction produces an
    element $b \in \mcg$ such that $b^{-1}gb$ has a good marking in
    $\m(\sigma)$ and $d_{\mg}(\mu_B, b\mu_B) \prec d_{\mg}(\mu_B, g\mu_B)$. Since
    any marking in $\m(\sigma)$ is a good marking for $b^{-1}gb$, including
    $\mu''$, and $\m(\sigma)$ is a finite set, the second statement follows.
  \end{proof}  
  
  \begin{proposition}\label{propBaseCase}
    
    Suppose $f, g \in \mcg$ are two conjugate infinite-order reducible
    mapping classes with the same canonical reducing system $\sigma$. Let
    $\mu$ be a good marking for $f$ and $g$. Then there exist a constant
    $K_\mu$ and $\omega \in \mcg$ such that $\omega$ is a conjugator for
    $f$ and $g$, and \[ d_{\mg}(\mu,\omega\mu) \prec K_\mu \big ( d_{\mg}(\mu,f\mu)
    + d_{\mg}(\mu,g\mu) \big).\] 
  \end{proposition}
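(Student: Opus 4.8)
The plan is to peel off the three layers of structure in $\stab(\sigma)$ exhibited by the exact sequences \eqref{eqStab} and \eqref{eqStab0} — the finite permutation of the components of $\s\setminus\sigma$, the mapping classes on the components, and the Dehn twists along $\sigma$ — building a conjugator one layer at a time with each piece linearly bounded, and at the end recombining the length estimates using that $\mu$ is a good marking, so that all the relevant distances split along $\sigma$. First, since $\sigma$ is the canonical reducing system of both $f$ and $g$, every conjugator from $f$ to $g$ carries $\sigma$ to $\sigma$ and hence lies in $\stab(\sigma)$. Fixing once and for all a finite set of coset representatives for the finite quotient $\stab(\sigma)/\stab_0(\sigma)$, I would replace $g$ by $s_0^{-1}gs_0$ for a suitable representative $s_0$ (a bounded conjugation, changing $d_M(\mu,g\mu)$ by at most a constant and changing neither $\sigma_g$ nor the goodness of $\mu$); after this we may assume $f$ and $g$ are conjugate by some $k\in\stab_0(\sigma)$, and since $k$ fixes every component of $\s\setminus\sigma$ setwise, $f$ and $g$ now induce the same permutation $\rho$ of the components. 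A conjugator $\omega_0$ for the new pair then gives $\omega=s_0\omega_0$ for the original pair with $d_M(\mu,\omega\mu)\le d_M(\mu,\omega_0\mu)+O(1)$.

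Next I would conjugate componentwise. Partition the components of $\s\setminus\sigma$ into $\rho$-orbits; each orbit $\mathcal{O}_j=\{Y_j,fY_j,\dots,f^{k_j-1}Y_j\}$ has length $k_j$ bounded by the order of the finite group, hence by $N$. For each $j$ consider the first return maps $\phi_j=f^{k_j}|_{Y_j}$ and $\psi_j=g^{k_j}|_{Y_j}$ in $\mathcal{MCG}(Y_j)$ of \thmref{thmNTCMCG}; restricting $g^{k_j}=kf^{k_j}k^{-1}$ to $Y_j$ shows $\phi_j$ and $\psi_j$ are conjugate in $\mathcal{MCG}(Y_j)$, and each $\phi_j$ is pseudo-Anosov, finite order, or infinite-order reducible. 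Because $f$ is reducible of infinite order, $\sigma\neq\emptyset$, so $\xi(Y_j)<\xi(\s)$ and L.B.C.~holds in $\mathcal{MCG}(Y_j)$ — by \thmref{thmPA} in the pseudo-Anosov case, \corref{corFiniteOrder} in the finite-order case, and the inductive hypothesis in the reducible case — producing conjugators $\omega_j$ with $|\omega_j|_{\mathcal{MCG}(Y_j)}\prec|\phi_j|_{\mathcal{MCG}(Y_j)}+|\psi_j|_{\mathcal{MCG}(Y_j)}$. I then assemble a single $\omega_1\in\stab_0(\sigma)$ by putting $\omega_j$ on $Y_j$, the orbit-translate $g^i\omega_j f^{-i}$ on $f^iY_j$, and the identity on a regular neighborhood of $\sigma$; a direct check on each component shows $\omega_1 f\omega_1^{-1}$ and $g$ agree on every component of $\s\setminus\sigma$, so by \eqref{eqStab0} their ratio $t=g^{-1}\omega_1 f\omega_1^{-1}$ lies in the free abelian twist group $T_\sigma$.

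It then remains to kill the residual twist and recombine the estimates. Conjugating $g$ by an element of $T_\sigma$ (equivalently, by a boundary-twist modification of one of the $\omega_j$) alters $t$ inside the image of $1-\rho$ on $T_\sigma$, and the remaining freedom is precisely the shifting of boundary twisting that a pseudo-Anosov component bounded by $\sigma$ absorbs — the curves satisfying (H1); a curve satisfying (H2) carries a rigid twist parameter which, since $f\sim g$, already matches up to the $\rho$-orbit. Hence $t$ lies in the subgroup of $T_\sigma$ along which $g$ can be conjugated, by an element $c$ with $|c|\prec\|t\|\prec\sum_{\alpha\in\sigma}\big(d_\alpha(\mu,f\mu)+d_\alpha(\mu,g\mu)\big)$, and $\omega_0=c\omega_1$ conjugates $f$ to $g$. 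For the length bound, $\mu$ good means every hierarchy $H(\mu,f\mu)$ is supported on the component domains of $(\s,\sigma)$ and the annuli of $\sigma$, so \thmref{thmDF}, the coarse embeddings $\Mark(Y_j)\hookrightarrow\mg$ of \eqref{eqUndistorted}, and the bounded orbit sizes give $\sum_j|\phi_j|_{\mathcal{MCG}(Y_j)}+\sum_{\alpha\in\sigma}d_\alpha(\mu,f\mu)\asymp d_M(\mu,f\mu)$, and likewise for $g$; feeding in the per-piece bounds on the $\omega_j$ and on $c$, re-embedding into $\mg$, and folding the accumulated coarse constants (from the inductive steps and the embeddings) into $K_\mu$ yields $d_M(\mu,\omega_0\mu)\prec K_\mu\big(d_M(\mu,f\mu)+d_M(\mu,g\mu)\big)$, hence the same for $\omega=s_0\omega_0$.

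The main obstacle I expect is this final bookkeeping: transferring word lengths faithfully in both directions between $\mg$ and the factors $\mathcal{MCG}(Y_j)$ and $T_\sigma$ using only that $\mu$ is good, and — inside that — pinning down exactly which twists along $\sigma$ are forced to agree by the conjugacy of $f$ and $g$ and which are free to be absorbed, since that is what makes the correction $c$ genuinely linear in the twisting of $f$ and $g$ rather than merely bounded in terms of some unknown global conjugator. The permutation-normalization and the orbit assembly of $\omega_1$ are routine once this is set up, and the reducible-on-subsurface input is exactly the inductive hypothesis.
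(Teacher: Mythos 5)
Your proposal follows the same architecture as the paper's proof: normalize the permutation of the components of $\s\setminus\sigma$ by conjugating with one of finitely many fixed representatives of the finite quotient in \eqref{eqStab}; solve the conjugacy problem with linear bounds on one representative component of each orbit and propagate that solution around the orbit using $f$ and $g$ (your $g^i\omega_j f^{-i}$ is the paper's $\omega_{i+1}=f_i\cdots f_1\omega_1 g_1^{-1}\cdots g_i^{-1}$ up to a transposed convention); and recombine the estimates via the distance formula and \eqref{eqUndistorted}, which split along $\sigma$ precisely because $\mu$ is a good marking. Two points of difference. First, your appeal to an inductive hypothesis for reducible first-return maps is unnecessary: since $\sigma$ is the \emph{canonical} reducing system, \thmref{thmNTCMCG} already forces each $f^{k_j}|_{Y_j}$ to be pseudo-Anosov or finite order, so only \thmref{thmPA} and \corref{corFiniteOrder} are needed on the components, exactly as in the paper. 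Second, the residual element $t\in T_\sigma$ that you isolate is the one genuinely delicate step; the paper dismisses it in a single sentence (``since twisting commute, any $\omega$ will satisfy $f\omega=\omega g$'') and then selects a lift that does not twist along $\sigma$, whereas you propose to correct $t$ by a further element of $T_\sigma$. Your bookkeeping of which coordinates of $t$ are movable (the image of $1-\rho$, plus centralizer modifications of the component conjugators) is the right framework, and your bound $\|t\|\prec\sum_\alpha(d_\alpha(\mu,f\mu)+d_\alpha(\mu,g\mu))$ is correct once a lift $\omega_1$ with bounded twisting is fixed; but the claim that the $\rho$-fixed coordinates of $t$ vanish or are absorbable --- in particular the assertion that the twist parameters along (H2)-curves ``already match'' --- is not carried to completion and would require comparing your assembled $\omega_1$ with an actual conjugator through the centralizers of the first-return maps. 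Since the paper itself treats this step as immediate, I do not count it as a gap relative to the paper's argument, but it is the one place where your outline, as you yourself flag, is not yet a proof.
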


  \begin{proof}
   
    Elements of $\stab_0(\sigma)$ are easier to handle, but a conjugator
    for $f^n$ and $g^n$ is not a conjugator for $f$ and $g$. Thus we cannot
    apply the results of \corref{corFiniteOrder} and \thmref{thmPA}, to
    powers of $f$ and $g$. We must deal with the issue of permuting
    subsurfaces in the proof of \propref{propBaseCase}. Fix a finite
    collection $\mathcal{P} \subset \stab(\sigma)$ such that
    $\pi(\mathcal{P})$ in the exact sequence \eqref{eqStab} is onto. Let
    \[P = \max \{\,\, d_{\mg}(\mu, a\mu) \,\,\,:\,\,\, a \in
    \mathcal{P} \,\,\}.\] Choose $a \in \mathcal{P}$ such that $f$ and
    $g' = aga^{-1}$ have the following properties: 
    \begin{itemize}
      
      \item[(i)] $\pi(f)=\pi(g')$.
        
      \item[(ii)] For any $X$ in $\s \setminus \sigma$, the first return
        map to $X$ of $f$ and $g'$ are conjugate.

    \end{itemize}
    If the proposition holds for $f$ and $g'$, say, there exist $K_\mu$ and
    $\omega \in \mcg$ such that $f \omega = \omega g'$ and \[d_{\mg}(\mu,
    \omega' \mu) \prec  K_\mu\big(d_{\mg}(\mu, f \mu) + d_{\mg}(\mu, g' \mu)\big),\]
    then $\omega a$ and $K_\mu + 2PK_\mu + P$ verify the proposition
    for $f$ and $g$. Clearly, $\omega a$ is a conjugator for $f$ and
    $g$. It remains to check: 
    \begin{align*} 
      d_{\mg}(\mu,\omega a \mu) 
      & \le d_{\mg}(\mu, \omega \mu) + d_{\mg}(\omega \mu, \omega a \mu) \\ 
      & \le d_{\mg}(\mu, \omega \mu) + d_{\mg}(\mu, a\mu) \\ 
      & \prec K_\mu \big( d_{\mg}(\mu, f \mu) + d_{\mg}(\mu, aga^{-1}
       \mu) \big) + P \\ 
       & \le K_\mu \big( d_{\mg}(\mu,f\mu) + d_{\mg}(\mu,g\mu) + 2P \big) + P.
       \end{align*}
    Thus, we may assume $f$ and $g$ already satisfy properties (i) and (ii)
    above. We will find a conjugator $\omega \in \stab_0(\sigma)$ for $f$
    and $g$. To do this, we will use properties (i) and (ii) and the
    induction hypothesis to build a conjugating element $\omega_Y \in
    \mathcal{MCG}_0(Y)$ for each component $Y$ in $\s \setminus \sigma$.
    Then we will choose an appropriate lift $\omega \in \stab_0(Y)$.
   
    Decompose the complementary components of $\s \setminus \sigma$ into
    orbits under the action $f$. Pick a representative from each orbit. Let
    $X_1$ be one such representative and consider the sequence of distinct
    complementary subsurfaces of $\s \setminus \sigma$ \[ X_1, X_2, \ldots,
    X_n \] such that $f(X_i) = X_{i+1}$ and $g(X_i) = X_{i+1}$, for $i=1,
    \ldots, n$ and $X_{n+1} = X_1$. Note that $n < N_0$. Set $f_i =
    f|_{X_i} : X_i \to X_{i+1}$, and similarly for $g_i$. Set the first
    return maps $F = f^{n+1}|_{X_1} \in \mathcal{MCG}(X_1)$ and $G =
    g^{n+1}|_{X_1} \in \mathcal{MCG}(X_1)$. The assumption is that $F$ and
    $G$ are conjugate in $\mathcal{MCG}(X_1)$. By \thmref{thmNTCMCG}, $F$
    and $G$ are either pseudo-Anosov or have finite order on $X_1$. Letting
    $\nu_1 = \Pi_{X_1}(\mu)$ be the induced marking on $X_1$, it follows
    from results of \corref{corFiniteOrder} and \thmref{thmPA} that there
    exist $\omega_1 \in \mathcal{MCG}_0(X_1)$ and $K_1=K_1(\nu_1, X_1)$
    such that $F \omega_1 = \omega_1 G$, and 
    \begin{align*} 
       d_{\Mark(X_1)}(\nu_1, \omega_1 \nu_1) 
       & \le K_1 \big( d_{\Mark(X_1)}(\nu_1, F\nu_1) + d_{\Mark(X_1)}(\nu_1,
       G\nu_1) \big) \\ 
       & \prec K_1 \big( d_{\mg}(\mu, F\mu) + d_{\mg}(\mu, G\mu) \big) \qquad
       (\text{By \eqref{eqUndistorted}}) \\ 
       & \prec K_1 \big( d_{\mg}(\mu, f^n\mu) + d_{\mg}(\mu, g^n\mu) \big) \\ 
       & \prec K_1 \big( d_{\mg}(\mu, f\mu) + d_{\mg}(\mu, g\mu) \big)  
    \end{align*}
    Using $f$ and $g$, we construct for each $X_i$ an element $\omega_i \in
    \mathcal{MCG}_0(X_i)$ such that $f_i \omega_i = \omega_{i+1} g_i$, for
    $i = 1, \ldots n$ and $n+1 = 1$. The element $\omega_1 \in
    \mathcal{MCG}_0(X_1)$ is defined. For each $i=1,\ldots n$, set \[
    \omega_{i+1} = f_i \cdots f_1 \omega_1 g_1^{-1} \cdots g_i^{-1}.  \] In
    particular, $\omega_{n+1} = F\omega_1G^{-1} = \omega_1$. 
    Let $\nu_i = \Pi_{X_i}(\mu)$. We have 
    \begin{align*}
       d_{\Mark(X_i)}(\nu_i, \omega_{i+1}\nu_i) 
       & = d_{\Mark(X_i)}(\nu_i, f_i \cdots f_1 \omega_1 g_1^{-1} \cdots
       g_i^{-1}\nu_i) \\ 
       & = d_{\Mark(X_i)}(\nu_i, f^i|_{X_1} \omega_1 g^{-i}|_{X_i} \nu_i) \\ 
       & = d_{\Mark(X_1)}(f^{-i}|_{X_i} \nu_i, \omega_1 g^{-i}|_{X_i} \nu_i) \\ 
       & \le d_{\Mark(X_1)}(f^{-i}|_{X_i} \nu_i, \nu_1) + d_{\Mark(X_1)}(\nu_1,
       \omega_1 g^{-i}|_{X_i} \nu_i) \\ 
       & = d_{\Mark(X_1)}(f^{-i}|_{X_i} \nu_i, \nu_1) +
       d_{\Mark(X_1)}(\omega_1^{-1} \nu_1, g^{-i}|_{X_i} \nu_i) \\ 
       & \le d_{\Mark(X_1)}(f^{-i}|_{X_i} \nu_i, \nu_1) +
       d_{\Mark(X_1)}(\omega_1^{-1} \nu_1, \nu_1) + d_{\Mark(X_1)}(\nu_1,
       g^{-i}\nu_i) \\ 
       & \prec d_{\mg}(\mu, f^i\mu) + d_{\mg}(\mu, g^i\mu) + d_{\Mark(X_1)}(\nu_1,
       \omega_1 \nu_1) \\
       & \prec K_1 \big( d_{\mg}(\mu, f\mu) + d_{\mg}(\mu, g\mu) \big)
    \end{align*}

    We do this for each orbit of complementary subsurfaces in $\s \setminus
    \sigma$, building for each $Y \subset \s \setminus \sigma$ an element
    $\omega_Y \in \mathcal{MCG}_0(Y)$. Consider any element $\omega \in
    \stab_0(\sigma)$ such that $\omega|_Y = \omega_Y$. Since twisting
    commute, any $\omega$ will satisfy $f \omega = \omega g$ by
    construction. Thus, we can choose a lift $\omega$ which does not twist
    along any curves in $\sigma$. Let $\{K_i\}$ be the constants associated
    to each orbit and let $K_\mu = \max \{K_i\}$. Using previous work and
    Equation \eqref{eqUndistorted2}, we obtain
    \begin{align*} 
      d_{\mg}(\mu, \omega\mu) 
       & \asymp \sum_{X \subset \s \setminus \sigma} d_{\Mark(X)}(\mu, \omega_X
       \mu) + \sum_{\alpha \in \sigma} d_\alpha(\mu, \omega \mu) \\ 
       & \prec \sum_{X \subset \s \setminus \sigma} K_\mu \big( d_{\mg}(\mu, f\mu)
       + d_{\mg}(\mu, g\mu) \big) \\ 
       & \prec K_\mu \big( d_{\mg}(\mu, f\mu) + d_{\mg}(\mu, g\mu) \big). 
       \qedhere   
    \end{align*}
  \end{proof}
  
  \begin{corollary}[L.B.C.~property for reducible mapping
    classes] \label{corReducible}

    If $f, g \in \mcg$ are conjugate reducible mapping classes of infinite
    order, then there is a conjugating element $\omega \in \mcg$ with \[
    |\omega| \prec |f| + |g|.  \]

  \end{corollary}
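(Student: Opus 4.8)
The plan is to chain together Theorem~\ref{thmReducible} and Proposition~\ref{propBaseCase}, using the finiteness of $\m$ to make all constants uniform. Let $f, g \in \mcg$ be conjugate reducible mapping classes of infinite order. First I would invoke Theorem~\ref{thmReducible}: since $f$ and $g$ are conjugate, a conjugator carries the canonical reducing system of $f$ onto that of $g$, so the two systems have the same representative multicurve $\sigma$, and the theorem produces elements $a, b \in \mcg$ together with a \emph{single} good marking $\mu \in \m(\sigma)$ that is good for both $f' := a^{-1}fa$ and $g' := b^{-1}gb$, with
\[
   d_M(\mu_0, a\mu_0) \prec d_M(\mu_0, f\mu_0) \asymp |f|,
   \qquad
   d_M(\mu_0, b\mu_0) \prec d_M(\mu_0, g\mu_0) \asymp |g|.
\]

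Next I would apply Proposition~\ref{propBaseCase} to $f'$ and $g'$, which are conjugate (each being conjugate to $f \sim g$), reducible, of infinite order, and share both the canonical reducing system $\sigma$ and the good marking $\mu$. This yields $\omega \in \mcg$ with $f'\omega = \omega g'$ and
\[
   d_M(\mu, \omega\mu) \prec K_\mu\big(d_M(\mu, f'\mu) + d_M(\mu, g'\mu)\big).
\]
Because $\mu$ ranges over the finite set $\m$, the constant $K := \max_{\mu \in \m} K_\mu$ is finite, and $C := \max_{\mu \in \m} d_M(\mu_0, \mu)$ is finite. The element $\omega' := a\omega b^{-1}$ then conjugates $g$ to $f$, since $\omega' g (\omega')^{-1} = a\,\omega g' \omega^{-1}\, a^{-1} = a f' a^{-1} = f$.

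It remains to bound $|\omega'| \le |a| + |\omega| + |b|$. The orbit map $\mcg \to \mg$ is a quasi-isometry, so $|a| \asymp d_M(\mu_0, a\mu_0) \prec |f|$ and $|b| \prec |g|$. For $|\omega|$, since $d_M(\mu_0,\mu) \le C$ and $\omega$ acts isometrically, $d_M(\mu_0, \omega\mu_0) \le 2C + d_M(\mu,\omega\mu) \prec d_M(\mu,f'\mu) + d_M(\mu,g'\mu)$. A triangle-inequality computation using $d_M(\mu_0,a\mu_0) \prec |f|$ gives
\[
   d_M(\mu, f'\mu) = d_M(a\mu, fa\mu) \le 2C + 2\,d_M(\mu_0, a\mu_0) + d_M(\mu_0, f\mu_0) \prec |f|,
\]
and symmetrically $d_M(\mu, g'\mu) \prec |g|$. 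Hence $|\omega| \asymp d_M(\mu_0,\omega\mu_0) \prec |f| + |g|$, and therefore $|\omega'| \prec |f| + |g|$.

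The one genuinely delicate point — the step I would flag as the crux — is replacing the a priori marking-dependent constant $K_\mu$ of Proposition~\ref{propBaseCase} by a uniform constant; this is precisely where the finiteness of $\m$ (equivalently, that there are finitely many conjugacy classes of canonical reducing systems and finitely many markings extending each, up to homeomorphism) is used. The remaining estimates are routine applications of the triangle inequality and of the coarse equality $d_M(\mu_0, f\mu_0) \asymp |f|$.
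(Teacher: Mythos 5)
Your proposal is correct and follows essentially the same route as the paper: conjugate $f$ and $g$ via Theorem~\ref{thmReducible} into elements sharing a common good marking $\mu$ in the finite set $\m$, apply Proposition~\ref{propBaseCase} there, take $K=\max_{\mu\in\m}K_\mu$ to make the constant uniform, and recombine the conjugators with routine triangle-inequality and quasi-isometry estimates. The identification of the finiteness of $\m$ as the crux matches the paper's reasoning exactly.
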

  
  \begin{proof}

    Let $\m$ be the set of representative markings and let $C$ be the
    constant bounding the diameter of $\m$. Let $K$ be the constant
    depending only on \s defined by \[ K = \max\{\,\, K_\mu \,\,\,:\,\,\,
    \mu \in \m \,\,\}, \] where $K_\mu$ is the constant associated to $\mu$
    in \propref{propBaseCase}. Suppose $f_1$ and $f_2$ are conjugate
    reducible mapping classes of infinite order. Let $a_1$ and $a_2$ be
    such that $a_if_ia_i^{-1}=g_i$ have a good marking $\mu \in \m$, and
    satisfying $d_{\mg}(\mu_B, a_i\mu_B) \prec d_{\mg}(\mu_B, f_i\mu_B)$. Then each
    \begin{align*}
      d_{\mg}(\mu_B, g_i\mu_B) 
      & \le 2d_{\mg}(\mu_B, a_i\mu_B) + d_{\mg}(\mu_B, f_i\mu_B) \\
      & \prec d_{\mg}(\mu_B, f_i\mu_B).
    \end{align*}
    By \propref{propBaseCase}, there exists $\omega \in \mcg$ such that
    $g_1\omega = \omega g_2$, and 
    \begin{align*}
      d_{\mg}(\mu,\omega\mu) 
      & \prec K_\mu \big( d_{\mg}(\mu,g_1\mu) + d_{\mg}(\mu,g_2\mu) \big) \\
      & \le K \big( d_{\mg}(\mu, g_1\mu) + d_{\mg}(\mu, g_2\mu) \big).  
    \end{align*}
    Hence, by the triangle inequality,
    \begin{align*}
      d_{\mg}(\mu_B,\omega\mu_B) 
      & \le d_{\mg}(\mu_B,\mu) + d_{\mg}(\mu,\omega\mu) + d_{\mg}(\omega\mu,\omega\mu_B) \\ 
      & \le 2C + K d_{\mg}(\mu,g_1\mu) + d_{\mg}(\mu,g_2\mu)) \\ 
      & \le 2C + K \big( 4C + d_{\mg}(\mu_B,g_1\mu_B) + d_{\mg}(\mu_B,g_2\mu_B) \big) \\ 
      & \prec d_{\mg}(\mu_B, g_1\mu_B) + d_{\mg}(\mu_B, g_2\mu_B)
    \end{align*}
    Set $\omega' = a_1^{-1}\omega a_2$. Then $\omega'$ is a conjugator for $f_1$
    and $f_2$, and 
    \begin{align*} 
      d_{\mg}(\mu_B, \omega'\mu_B) 
      & \le d_{\mg}(\mu_B, a_1\mu_B) + d_{\mg}(\mu_B, \omega \mu_B) +
      d_{\mg}(\mu_B,
      a_2\mu_B)\\
      & \prec d_{\mg}(\mu_B, f_1\mu_B) + d_{\mg}(\mu_B, f_2\mu_B). 
    \end{align*} 
    This concludes the proof the corollary.\qedhere
  \end{proof}
  

  \bibliographystyle{amsalpha}
  \bibliography{references}

  \end{document}